\newtheorem{theorem}{Theorem}[section]
\theoremstyle{plain}
\newtheorem{lemma}{Lemma}[section]
\numberwithin{equation}{section}
\begin{document}
\title[Long-time dynamics]{Long-time dynamics of the strongly damped
semilinear plate equation in $%
\mathbb{R}
^{n}$}
\author{Azer Khanmamedov \ }
\address{{\small Department of Mathematics,} {\small Faculty of Science,
Hacettepe University, Beytepe 06800}, {\small Ankara, Turkey}}
\email{azer@hacettepe.edu.tr}
\author{\ Sema Yayla}
\address{{\small Department of Mathematics,} {\small Faculty of Science,
Hacettepe University, Beytepe 06800}, {\small Ankara, Turkey}}
\email{semasimsek@hacettepe.edu.tr}
\subjclass[2000]{ 35B41, 35G20, 74K20}
\keywords{wave equation, plate equation, global attractor}

\begin{abstract}
We investigate the initial-value problem for the semilinear plate equation
containing localized strong damping, localized weak damping and nonlocal
nonlinearity. We prove that if nonnegative damping coefficients are strictly
positive almost everywhere in the exterior of some ball and the sum of these
coefficients is positive a.e. in $%
\mathbb{R}
^{n}$, then the semigroup generated by the considered problem possesses a
global attractor in $H^{2}\left(
\mathbb{R}
^{n}\right) \times L^{2}\left(
\mathbb{R}
^{n}\right) $. We also establish boundedness of this attractor in $%
H^{3}\left(
\mathbb{R}
^{n}\right) \times H^{2}\left(
\mathbb{R}
^{n}\right) $.
\end{abstract}

\maketitle

\section{Introduction}

\bigskip In this paper, our main purpose is to study the long-time dynamics
(in terms of attractors) of the plate equation\ \ \ \ \ \ \ \ \ \ \ \ \ \ \
\ \ \ \ \ \ \ \ \ \ \ \ \ \ \ \ \ \ \ \ \
\begin{equation}
u_{tt}+\gamma \Delta ^{2}u-{div}\left( \beta \left( x\right) \nabla
u_{t}\right) +\alpha (x)u_{t}+\lambda u-f(\left\Vert \nabla u\left( t\right)
\right\Vert _{L^{2}\left(
\mathbb{R}
^{n}\right) })\Delta u+g\left( u\right) =h\left( x\right) \text{, \ }%
(t,x)\in
\mathbb{R}
^{+}\times
\mathbb{R}
^{n}\text{,}  \tag{1.1}
\end{equation}%
with initial data%
\begin{equation}
u(0,x)=u_{0}(x)\text{, \ }u_{t}(0,x)=u_{1}(x)\text{, \ \ }x\in
\mathbb{R}
^{n}\text{, \ \ \ \ \ \ \ \ \ \ \ \ \ \ \ \ \ \ \ \ \ \ \ \ \ \ \ \ \ \ }
\tag{1.2}
\end{equation}%
where $\gamma >0$, $\lambda >0$, $h\in L^{2}\left(
\mathbb{R}
^{n}\right) $ and the functions $\alpha \left( \cdot \right) ,$ $\beta
\left( \cdot \right) ,$ $f\left( \cdot \right) $ and $g\left( \cdot \right) $
satisfy the following conditions:%
\begin{equation}
\alpha ,\text{ }\beta \in L^{\infty }(%
\mathbb{R}
^{n})\text{, }\alpha (\cdot )\geq 0,\text{ }\beta \left( \cdot \right) \geq 0%
\text{\ \ a.e. in }%
\mathbb{R}
^{n}\text{,}  \tag{1.3}
\end{equation}%
\begin{equation}
\alpha (\cdot )\geq \alpha _{0}>0\text{ and }\beta (\cdot )\geq \beta _{0}>0%
\text{ a.e. in }\left\{ x\in
\mathbb{R}
^{n}:\left\vert x\right\vert \geq r_{0}\right\} \text{, for some }r_{0}>0%
\text{,}  \tag{1.4}
\end{equation}%
\begin{equation}
\alpha (\cdot )+\beta \left( \cdot \right) >0\text{ \ a.e. in }%
\mathbb{R}
^{n}\text{,}  \tag{1.5}
\end{equation}%
\begin{equation}
f\in C^{1}(%
\mathbb{R}
^{+}),\text{ }f\left( z\right) \geq 0\text{, for all }z\in
\mathbb{R}
^{+},  \tag{1.6}
\end{equation}%
\begin{equation}
g\in C^{1}(%
\mathbb{R}
)\text{, }\left\vert g^{\prime }(s)\right\vert \leq C\left( 1+\left\vert
s\right\vert ^{p-1}\right) \text{, }p\geq 1\text{, }(n-4)p\leq n\text{,}
\tag{1.7}
\end{equation}%
\begin{equation}
\text{ }g(s)s\geq 0\text{, for every }s\in
\mathbb{R}
\text{.}  \tag{1.8}
\end{equation}

The problem (1.1)-(1.2) can be reduced to the following Cauchy problem for
the first order abstract differential equation in the space $H^{2}\left(
\mathbb{R}
^{n}\right) \times $\ $L^{2}\left(
\mathbb{R}
^{n}\right) $:%
\begin{equation*}
\left\{
\begin{array}{c}
\frac{d}{dt}\theta (t)=A\theta (t)+\mathcal{F(}\theta (t)), \\
\theta (0)=\theta _{0},%
\end{array}%
\right.
\end{equation*}%
\ where $\theta (t)=(u(t),u_{t}(t))$,\ $\theta _{0}=(u_{0},u_{1})$, $A(u,$ $%
v)=(v,$ $-\gamma \Delta ^{{\small 2}}u+{div}\left( \beta \left( \cdot
\right) \nabla v\right) -\alpha (\cdot )v-\lambda u)$, $D(A)=\left\{
(u,v)\in H^{3}\left(
\mathbb{R}
^{n}\right) \times \ H^{2}\left(
\mathbb{R}
^{n}\right) :\text{ }\gamma \Delta ^{{\small 2}}u-{div}\left( \beta \left(
\cdot \right) \nabla v\right) \in L^{2}\left(
\mathbb{R}
^{n}\right) \right\} $ and $\mathcal{F}(u,v)=(0,$ $f(\left\Vert \nabla
u\right\Vert _{L^{2}\left(
\mathbb{R}
^{n}\right) })\Delta u$\newline
$-g\left( u\right) +h)$. Defining suitable equivalent norm in $H^{2}\left(
\mathbb{R}
^{n}\right) \times $\ $L^{2}\left(
\mathbb{R}
^{n}\right) $, it is easy to see that the operator $A$, thanks to (1.3), is
maximal dissipative in $H^{2}\left(
\mathbb{R}
^{n}\right) \times $\ $L^{2}\left(
\mathbb{R}
^{n}\right) $ and consequently, due to Lumer-Phillips Theorem (see \cite[%
Theorem 4.3]{1}), it generates a linear continuous semigroup $\left\{
e^{tA}\right\} _{t\geq 0}$. Also, by (1.6)-(1.7), we find that the nonlinear
operator $\mathcal{F}:H^{2}\left(
\mathbb{R}
^{n}\right) \times $\ $L^{2}\left(
\mathbb{R}
^{n}\right) \rightarrow H^{2}\left(
\mathbb{R}
^{n}\right) \times $\ $L^{2}\left(
\mathbb{R}
^{n}\right) $ is Lipschitz continuous on bounded subsets of $H^{2}\left(
\mathbb{R}
^{n}\right) \times $\ $L^{2}\left(
\mathbb{R}
^{n}\right) $. So, applying semigroup theory (see, for example \cite[p. 56-58%
]{2}), and taking advantage of energy estimates, we have the following
well-posedness result.

\begin{theorem}
Assume that the conditions (1.3), (1.6), (1.7) and (1.8) hold. Then, for
every $\left( u_{0},u_{1}\right) \in H^{2}\left(
\mathbb{R}
^{n}\right) \times L^{2}\left(
\mathbb{R}
^{n}\right) $, the problem (1.1)-(1.2) has a unique weak solution $u\in
C\left( [0,\infty );H^{2}\left(
\mathbb{R}
^{n}\right) \right) $\newline
$\cap $ $C^{1}\left( [0,\infty );L^{2}\left(
\mathbb{R}
^{n}\right) \right) $, which depends continuously on the initial data and
satisfies the energy equality
\begin{equation*}
E\left( u\left( t\right) \right) +\int\limits_{%
\mathbb{R}
^{n}}G\left( u\left( t,x\right) \right) dx+\frac{1}{2}F\left( \left\Vert
\nabla u\left( t\right) \right\Vert _{L^{2}\left(
\mathbb{R}
^{n}\right) }^{2}\right) -\int\limits_{%
\mathbb{R}
^{n}}h\left( x\right) u\left( t,x\right) dx
\end{equation*}%
\begin{equation*}
+\int\limits_{s}^{t}\int\limits_{%
\mathbb{R}
^{n}}\alpha \left( x\right) \left\vert u_{t}\left( \tau ,x\right)
\right\vert ^{2}dxd\tau +\int\limits_{s}^{t}\int\limits_{%
\mathbb{R}
^{n}}\beta \left( x\right) \left\vert \nabla u_{t}\left( \tau ,x\right)
\right\vert ^{2}dxd\tau
\end{equation*}%
\begin{equation}
\text{ }=E\left( u\left( s\right) \right) +\int\limits_{%
\mathbb{R}
^{n}}G\left( u\left( s,x\right) \right) dx+\frac{1}{2}F\left( \left\Vert
\nabla u\left( s\right) \right\Vert _{L^{2}\left(
\mathbb{R}
^{n}\right) }^{2}\right) -\int\limits_{%
\mathbb{R}
^{n}}h\left( x\right) u\left( s,x\right) dx\text{, \ }\forall t\geq s\geq 0%
\text{,}  \tag{1.9}
\end{equation}%
where $F\left( z\right) =\int\limits_{0}^{z}f\left( \sqrt{s}\right) ds$ for
all $z\in
\mathbb{R}
^{+},$ $G\left( z\right) =\int\limits_{0}^{z}g\left( s\right) ds$ for all $%
z\in
\mathbb{R}
$ and $E\left( u\left( t\right) \right) =\frac{1}{2}\int\limits_{%
\mathbb{R}
^{n}}(\left\vert u_{t}\left( t,x\right) \right\vert ^{2}+\gamma \left\vert
{\small \Delta u}\left( t,x\right) \right\vert ^{2}+\lambda \left\vert
{\small u}\left( t,x\right) \right\vert ^{2})dx$. Moreover, if $\left(
u_{0},u_{1}\right) \in D(A)$, then $u(t,x)$ is a strong solution satisfying $%
(u,u_{t})\in C\left( [0,\infty );D(A)\right) \cap $ $C^{1}[0,\infty
);H^{2}\left(
\mathbb{R}
^{n}\right) \times L^{2}\left(
\mathbb{R}
^{n}\right) $.
\end{theorem}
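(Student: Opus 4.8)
The plan is to establish local well-posedness by the standard semigroup/fixed-point machinery, then globalize via the energy estimate, and finally promote regularity of the data to regularity of the solution.

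\textbf{Step 1 (Local existence and uniqueness).} Having reduced (1.1)--(1.2) to $\frac{d}{dt}\theta = A\theta + \mathcal{F}(\theta)$, $\theta(0)=\theta_0$, I would invoke the variation-of-parameters formulation $\theta(t) = e^{tA}\theta_0 + \int_0^t e^{(t-s)A}\mathcal{F}(\theta(s))\,ds$ and run a contraction-mapping argument on $C([0,T];H^2(\mathbb{R}^n)\times L^2(\mathbb{R}^n))$ for $T$ small depending only on $\|\theta_0\|$. The two ingredients are already in hand: $A$ generates the $C_0$-semigroup $\{e^{tA}\}_{t\ge0}$ (maximal dissipativity from (1.3) plus Lumer--Phillips), so $\|e^{tA}\|\le 1$ in the equivalent norm; and $\mathcal{F}$ is Lipschitz on bounded sets (from (1.6)--(1.7), with the growth exponent constraint $(n-4)p\le n$ guaranteeing $H^2(\mathbb{R}^n)\hookrightarrow L^{2p}(\mathbb{R}^n)$ so that $g(u)\in L^2$). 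This yields a unique mild solution on a maximal interval $[0,T_{\max})$, and continuous dependence on the data follows from Gronwall applied to the difference of two mild solutions.

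\textbf{Step 2 (Global existence via the energy identity).} The solution is global provided $\|\theta(t)\|$ stays bounded on bounded time intervals. I would first derive the energy equality (1.9) — formally multiply (1.1) by $u_t$ and integrate over $(s,t)\times\mathbb{R}^n$; justify it for strong solutions and pass to the limit for weak solutions by density of $D(A)$. Then I use the sign conditions: $\gamma,\lambda>0$ make $E$ coercive, $G\ge0$ by (1.8), $F\ge0$ by (1.6), and the damping terms have a good sign; the only indefinite term $\int h\,u$ is absorbed by Cauchy--Schwarz and Young together with the $\lambda\|u\|^2$ piece of $E$. This gives an \emph{a priori} bound $\|\theta(t)\|\le C(\|\theta_0\|,\|h\|)$ uniform on $[0,\infty)$, hence $T_{\max}=\infty$, and the regularity $u\in C([0,\infty);H^2)\cap C^1([0,\infty);L^2)$ is read off from the mild formulation.

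\textbf{Step 3 (Strong solutions).} For $\theta_0\in D(A)$, standard semigroup regularity theory (e.g.\ \cite[p.~56--58]{2}) applies once $\mathcal{F}$ is shown to be $C^1$, or at least locally Lipschitz as a map into the graph norm along the trajectory; then $t\mapsto\theta(t)$ is a classical solution with $\theta\in C([0,\infty);D(A))\cap C^1([0,\infty);H^2\times L^2)$, and a density/approximation argument upgrades the energy equality (1.9) to hold in this class without the need of mollification.

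\textbf{The main obstacle} I anticipate is the careful verification of the mapping properties of $\mathcal{F}$ at the borderline Sobolev exponent in (1.7): one must check that $u\mapsto g(u)$ sends $H^2(\mathbb{R}^n)$ into $L^2(\mathbb{R}^n)$ and is Lipschitz on bounded sets precisely when $(n-4)p\le n$ (the critical case requiring the Gagliardo--Nirenberg/Sobolev embedding $H^2\hookrightarrow L^{2p}$), and that the nonlocal coefficient $f(\|\nabla u\|_{L^2})\Delta u$ is likewise well-behaved — here $f\in C^1$ on $\mathbb{R}^+$ only, so one works with $\|\nabla u\|_{L^2}^2\ge0$ staying in the domain of $F$ and uses local Lipschitz continuity of $f$ away from possible issues at $0$, which is why $F$ is defined via $F(z)=\int_0^z f(\sqrt s)\,ds$ rather than through $f$ directly. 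Everything else is routine once these embeddings and the sign structure (1.3)--(1.8) are in place.
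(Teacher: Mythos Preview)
Your proposal is correct and follows essentially the same route the paper indicates: the paper does not give a detailed proof of Theorem~1.1 but merely notes, in the paragraph preceding the statement, that maximal dissipativity of $A$ (via (1.3) and Lumer--Phillips) together with the local Lipschitz property of $\mathcal{F}$ (via (1.6)--(1.7)) allows one to apply the semigroup theory of \cite[p.~56--58]{2} and then globalize by the energy estimate. Your three steps are exactly this program, fleshed out in the standard way, and the subtleties you flag (critical Sobolev exponent for $g$, the $C^1$-on-$\mathbb{R}^+$ domain of $f$) are the right ones to watch.
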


Thus, due to Theorem 1.1, the problem (1.1)-(1.2) generates a strongly
continuous semigroup $\left\{ S\left( t\right) \right\} _{t\geq 0}$ in $%
H^{2}\left(
\mathbb{R}
^{n}\right) \times L^{2}\left(
\mathbb{R}
^{n}\right) $ by the formula $\left( u\left( t\right) ,u_{t}\left( t\right)
\right) =S\left( t\right) (u_{0},u_{1})$, where $u\left( t,x\right) $ is a
weak solution of (1.1)-(1.2) with the initial data $\left(
u_{0},u_{1}\right) $.

Attractors for hyperbolic and hyperbolic like equations in unbounded domains
have been extensively studied by many authors over the last few decades. To
the best of our knowledge, the first works in this area were done by
Feireisl in \cite{3} and \cite{4}, for the wave equations with the weak
damping (the case $\gamma =0$, $\beta \equiv 0$ and $f\equiv 1$ in (1.1)) .
In those articles the author, by using the finite speed propagation property
of the wave equations, established the existence of the global attractors in
$H^{1}\left(
\mathbb{R}
^{n}\right) \times L^{2}\left(
\mathbb{R}
^{n}\right) $. The global attractors for the wave equations involving strong
damping in the form $-\Delta u_{t}$, besides weak damping, were investigated
in \cite{5} and \cite{6}, where the authors, by using splitting method,
proved the existence of the global attractors in $H^{1}\left(
\mathbb{R}
^{n}\right) \times L^{2}\left(
\mathbb{R}
^{n}\right) $, under different conditions on the nonlinearities. Recently,
in \cite{7}, the results of \cite{5} and \cite{6} have been improved for the
wave equation involving additional nonlocal nonlinear term in the form $%
-(a+b\left\Vert \nabla u\left( t\right) \right\Vert _{L^{2}\left(
\mathbb{R}
^{n}\right) }^{2})\Delta u$ ($a\geq 0,$ $b>0)$. For the plate equation with
only weak damping and local nonlinearity (the case $\gamma =1$, $\beta
\equiv 0$ and $f\equiv 0$ in (1.1)), attractors were investigated in \cite{8}
and \cite{9}, where the author, inspired by the methods of \cite{10} and
\cite{11}, proved the existence, regularity and finite dimensionality of the
global attractors in $H^{2}\left(
\mathbb{R}
^{n}\right) \times L^{2}\left(
\mathbb{R}
^{n}\right) $. The situation becomes more difficult when the equation
contains localized damping terms and nonlocal nonlinearities. Recently, in
\cite{12} and \cite{13}, the plate equation with localized weak damping (the
case $\beta \equiv 0$ in (1.1)) and involving nonlocal nonlinearities as $%
-f(\left\Vert \nabla u\right\Vert _{L^{2}\left(
\mathbb{R}
^{n}\right) })\Delta u$ and $\ f(\left\Vert u\right\Vert _{L^{p}\left(
\mathbb{R}
^{n}\right) })\left\vert u\right\vert ^{p-2}u$ have been considered. In
these articles, the existence of global attractors has been proved when the
coefficient $\alpha (\cdot )$ of the weak damping term is strictly positive
(see \cite{12}) or, in addition to (1.3), is positive (see \cite{13}) almost
everywhere in $%
\mathbb{R}
^{n}$. However, in the case when $\alpha (\cdot )$ vanishes in a set of \
positive measure, the existence of the global attractor for (1.1) with $%
\beta \equiv 0$ remained as an open question (see \cite[Remark 1.2]{12}). On
the other hand, in the case when $\alpha \equiv 0$ and even $\beta \equiv 1$%
, the semigroup $\left\{ S\left( t\right) \right\} _{t\geq 0}$ generated by
(1.1)-(1.2) does not possess a global attractor in $H^{2}\left(
\mathbb{R}
^{n}\right) \times L^{2}\left(
\mathbb{R}
^{n}\right) $. Indeed, if $\left\{ S\left( t\right) \right\} _{t\geq 0}$
possesses a global attractor, then the linear semigroup $\left\{
e^{tA}\right\} _{t\geq 0}$ decay exponentially in the real and consequently,
complex space $H^{2}\left(
\mathbb{R}
^{n}\right) \times L^{2}\left(
\mathbb{R}
^{n}\right) $, which, due to Hille-Yosida Theorem (see \cite[Remark 5.4]{1}%
), implies necessary condition $i%
\mathbb{R}
\subset \rho (A)$. This condition is equivalent to the solvability of the
equation $(i\mu I-A)(u,v)=(y,z)$ in $H^{2}\left(
\mathbb{R}
^{n}\right) \times L^{2}\left(
\mathbb{R}
^{n}\right) $, for every\ $(y,z)$ in $H^{2}\left(
\mathbb{R}
^{n}\right) \times L^{2}\left(
\mathbb{R}
^{n}\right) $ and $\mu \in
\mathbb{R}
$. Choosing $\mu =\sqrt{\lambda }$ and $y=0$, we have $\ v=i\sqrt{\lambda }u$
and $\Delta (\Delta u-iu)=z$. If the last equation for every $z\in
L^{2}\left(
\mathbb{R}
^{n}\right) $ has a solution $u\in $ $H^{3}\left(
\mathbb{R}
^{n}\right) $, then denoting $\varphi =\Delta u-iu$, we can say that the
equation $\Delta \varphi =z$ has a solution in $H^{1}\left(
\mathbb{R}
^{n}\right) $, for every $z\in L^{2}\left(
\mathbb{R}
^{n}\right) $. However, the last equation, as shown in \cite{6}, is not
solvable in $H^{1}\left(
\mathbb{R}
^{n}\right) $ for some $z\in L^{2}\left(
\mathbb{R}
^{n}\right) $. Hence, the necessary condition $i%
\mathbb{R}
\subset \rho (A)$ does not hold. Thus, in the case when $\alpha \equiv 0$
and $\beta \equiv 1$, the problem (1.1)-(1.2) does not have a global
attractor, and in the case when $\beta \equiv 0$ and $\alpha (\cdot )$
vanishes in a set of positive measure, the existence of the global for
(1.1)-(1.2) is an open question.

In this paper, we impose conditions (1.3)-(1.5) on damping coefficients $%
\alpha (\cdot )$ and $\beta (\cdot )$, which, unlike the conditions imposed
in the previous articles dealing with the wave and plate equations involving
strong damping and/or nonlocal nonlinearities, allow both of them to be
vanished in the sets of positive measure such that in these sets the strong
damping and weak damping complete each other. Thus, our main result is as
follows:

\begin{theorem}
Under the conditions (1.3)-(1.8) the semigroup $\left\{ S\left( t\right)
\right\} _{t\geq 0}$ generated by the problem (1.1)-(1.2) possesses a global
attractor $\mathcal{A}$ in $H^{2}\left(
\mathbb{R}
^{n}\right) \times L^{2}\left(
\mathbb{R}
^{n}\right) $ and $\mathcal{A=M}^{u}\left( \mathcal{N}\right) $. Here $%
\mathcal{M}^{u}\left( \mathcal{N}\right) $ is unstable manifold emanating
from the set of stationary points $\mathcal{N}$ (for definition, see \cite[%
359]{14}). Moreover, the global attractor $\mathcal{A}$ is bounded in $%
H^{3}\left(
\mathbb{R}
^{n}\right) \times H^{2}\left(
\mathbb{R}
^{n}\right) $.
\end{theorem}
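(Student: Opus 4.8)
The plan is to prove Theorem 1.2 in several standard but technically demanding stages, following the general framework for gradient systems with continuous Lyapunov functionals (as in \cite{14}). The structure of the energy equality (1.9) together with conditions (1.6), (1.8) shows that $\mathcal{L}(\theta)=E(u)+\int_{\mathbb{R}^n}G(u)\,dx+\tfrac12 F(\|\nabla u\|^2)-\int_{\mathbb{R}^n}hu\,dx$ is a strict Lyapunov functional, decreasing along trajectories and constant only on stationary points. Hence, once we know the semigroup is asymptotically smooth and has bounded orbits, the abstract theory gives $\mathcal{A}=\mathcal{M}^u(\mathcal{N})$.

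First I would establish dissipativity: using (1.9) with $s=0$, a multiplier argument (testing (1.1) with $u_t+\varepsilon u$ for small $\varepsilon>0$) and the coercivity coming from $\lambda>0$, $\gamma>0$, $f\ge0$, $g(s)s\ge0$, I would produce a bounded absorbing set $\mathcal{B}$ in $H^2(\mathbb{R}^n)\times L^2(\mathbb{R}^n)$. The cross term $\varepsilon u$ is needed precisely because the damping is only localized; the positivity condition (1.5), namely $\alpha+\beta>0$ a.e., is what lets one absorb the indefinite terms. Along the way I would record a dissipative integral estimate $\int_t^{t+1}\!\!\int_{\mathbb{R}^n}(\alpha|u_t|^2+\beta|\nabla u_t|^2)\,dx\,d\tau\le C$.

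Next — and this is the main obstacle — comes asymptotic compactness. Because the domain is unbounded, the embedding $H^2\hookrightarrow L^2$ is not compact, so one must control the ``energy at infinity'' and the ``energy in the low-frequency tail'' separately. Condition (1.4) ($\alpha,\beta$ bounded below outside the ball $\{|x|\ge r_0\}$) is exactly what makes the exterior region behave like a fully damped problem: multiplying by a cutoff $\psi(x/R)u_t$ and using (1.4) one shows the tail energy $\int_{|x|\ge R}(|u_t|^2+|\Delta u|^2+|u|^2)\,dx$ becomes uniformly small for $R$ large, uniformly in $t$. On the remaining ball $\{|x|\le R\}$, where the damping may degenerate, one splits the solution as $S(t)\theta_0=Z(t)\theta_0+Y(t)\theta_0$ with $Z$ solving the linear homogeneous strongly-damped plate equation with zero data and a decaying forcing built from the compactly-supported nonlinear remainder, and $Y$ the rest. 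The key quantitative input is a compactness/decay estimate for the strongly damped linear plate semigroup on bounded regions together with the compact embedding $H^3(B_R)\hookrightarrow H^2(B_R)$; the degeneracy of $\alpha,\beta$ inside $B_R$ is handled by the now-classical trick of absorbing the ``bad'' region into a compact perturbation via (1.5) and an Aubin–Lions type argument applied to the difference of two trajectories. Combining the tail smallness with the interior compactness yields asymptotic smoothness of $\{S(t)\}_{t\ge0}$, hence the existence of a compact global attractor $\mathcal{A}$, which by the Lyapunov structure equals $\mathcal{M}^u(\mathcal{N})$.

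Finally, for the regularity claim $\mathcal{A}\subset H^3(\mathbb{R}^n)\times H^2(\mathbb{R}^n)$, I would use the standard bootstrap on the attractor: since $\mathcal{A}$ is invariant and bounded in $H^2\times L^2$, write any complete trajectory as the fixed point of the variation-of-constants formula for $e^{tA}$, differentiate (1.1) formally in $t$ (or work with finite differences), and exploit the smoothing of the strongly damped part $-\operatorname{div}(\beta\nabla u_t)$ together with (1.4) and the growth restriction $(n-4)p\le n$ in (1.7) to obtain a uniform bound on $(u_t,u_{tt})$ in $H^2\times L^2$ along the attractor; then the elliptic equation $\gamma\Delta^2 u=h-u_{tt}+\operatorname{div}(\beta\nabla u_t)-\alpha u_t-\lambda u+f(\|\nabla u\|^2)\Delta u-g(u)$ upgrades $u$ to $H^3$ via $H^4_{loc}$ elliptic regularity combined with the already-established $H^2$ global bound and the tail estimate. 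The only delicate point here is again the unboundedness of $\mathbb{R}^n$: the elliptic estimates must be localized and then summed using the uniform tail decay obtained earlier, and one must check that the exponent condition $(n-4)p\le n$ keeps $g(u)$ in the right space. Assembling these pieces completes the proof of Theorem 1.2.
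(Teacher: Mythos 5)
Your outline identifies the right global architecture (gradient system with Lyapunov functional $\mathcal{L}$, asymptotic compactness split into a tail estimate plus interior compactness, finite differences in $t$ for the regularity), and the regularity stage is essentially the paper's argument. But the two central technical steps are proposed in a form that would not go through. First, the interior compactness: you propose the splitting $S(t)=Z(t)+Y(t)$ with $Z$ a decaying part built on ``a compactness/decay estimate for the strongly damped linear plate semigroup.'' Under (1.3)--(1.5) no such decay estimate is available --- the paper's introduction shows that even for $\alpha\equiv 0$, $\beta\equiv 1$ the linear semigroup fails to decay in $H^{2}(\mathbb{R}^{n})\times L^{2}(\mathbb{R}^{n})$, and inside $B(0,r_{0})$ both $\alpha$ and $\beta$ may vanish on sets of positive measure, so proving decay of any candidate linear part is as hard as the theorem itself. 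Moreover the Kirchhoff term $f(\left\Vert \nabla u\right\Vert)\Delta u$ maps bounded sets of $H^{2}$ to bounded but \emph{non-precompact} sets of $L^{2}$, so it cannot be relegated to the ``compact'' component $Y$. The paper avoids splitting entirely: it applies the energy method of \cite{16} to differences $v_{m}-v_{l}$ of trajectories, using a Morawetz-type multiplier $\sum_{i}x_{i}(1-\eta_{4r})(v_{m}-v_{l})_{x_{i}}+\cdots$ to control the undamped interior by the damped exterior, a weighted-in-time multiplier to convert integrated smallness into pointwise smallness, and Lemmas 2.1--2.2 (Aubin--Lions plus integration by parts in $t$) to kill the nonlinear difference terms. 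Your one-sentence appeal to ``the now-classical trick'' does not substitute for this.

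Second, the tail estimate. Testing (1.1) with $\eta_{r}^{2}u_{t}$ produces the term $-f(\left\Vert \nabla u(t)\right\Vert_{L^{2}(\mathbb{R}^{n})})\,\tfrac{1}{2}\tfrac{d}{dt}\left\Vert \eta_{r}\nabla u(t)\right\Vert_{L^{2}(\mathbb{R}^{n})}^{2}$ (up to commutators), and to close a Gronwall inequality one must move the coefficient inside the time derivative; the resulting error involves $\tfrac{d}{dt}f(\left\Vert \nabla u(t)\right\Vert)$, which depends on $\int \nabla u\cdot \nabla u_{t}\,dx$ over \emph{all} of $\mathbb{R}^{n}$, including the region where $\alpha$ and $\beta$ degenerate, and hence is \emph{not} controlled by the available dissipation $\left\Vert \sqrt{\alpha}u_{t}\right\Vert^{2}+\left\Vert \sqrt{\beta}\nabla u_{t}\right\Vert^{2}$. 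This is the crux of the whole paper and is resolved only by Lemma 2.3, which constructs weights $\psi_{\varepsilon}\leq \delta^{-1}\beta$ and $\varphi_{\varepsilon}$ supported in $\{\alpha\geq\delta\}$ so that $f(\left\Vert \nabla u\right\Vert)$ is uniformly approximated by $f_{\delta}$ of a quantity whose time derivative \emph{is} dominated by the damping; your proposal omits this entirely, so the Gronwall argument does not close. A smaller but real gap: strictness of the Lyapunov functional is not automatic from (1.9) --- constancy of $\mathcal{L}$ only yields $\alpha u_{t}=0$ and $\beta\nabla u_{t}=0$ a.e., and one still needs the argument via (1.5) that $\partial_{x_{i}}(u_{t}^{2})=0$ a.e., hence $u_{t}^{2}$ is constant in $x$ and therefore zero since $u_{t}\in L^{2}(\mathbb{R}^{n})$.
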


The plan of the paper is as follows: In the next section, after the proof of
two auxiliary lemmas, we establish asymptotic compactness of $\left\{
S\left( t\right) \right\} _{t\geq 0}$ in the interior domain. Then, we prove
Lemma 2.3, which plays a key role for the tail estimate, and thereby we show
that the solutions of (1.1)-(1.2) are uniformly (with respect to the initial
data) small at infinity for large time. This fact, together with asymptotic
compactness in the interior domain, yields asymptotic compactness of $%
\left\{ S\left( t\right) \right\} _{t\geq 0}$ in the whole space, and\ by
applying the abstract result on the gradient systems, we establish the
existence of the global attractor (see Theorem 2.3). In Section 3, by using
the invariance of the global attractor, we show that it has an additional
regularity.

\section{Existence of the global attractor}

We begin with the following lemmas:

\begin{lemma}
Assume that the condition (1.6) holds. Also, assume that\ the sequence $%
\left\{ v_{m}\right\} _{m=1}^{\infty }$ is weakly star convergent in $%
L^{\infty }\left( 0,\infty ;H^{2}\left(
\mathbb{R}
^{n}\right) \right) $, the sequence $\left\{ v_{mt}\right\} _{m=1}^{\infty }$
is bounded in $L^{\infty }\left( 0,\infty ;L^{2}\left(
\mathbb{R}
^{n}\right) \right) $ and the sequence $\left\{ \left\Vert \nabla
v_{m}\left( t\right) \right\Vert _{L^{2}\left(
\mathbb{R}
^{n}\right) }\right\} _{m=1}^{\infty }$ is convergent, for all $t\geq 0$.
Then, for every $r>0$ and $\phi \in C_{0}^{1}(B\left( 0,r\right) )$%
\begin{equation*}
\underset{m\rightarrow \infty }{\lim }\text{ }\underset{l\rightarrow \infty }%
{\lim \sup }\left\vert \int\limits_{0}^{t}\int\limits_{B\left( 0,r\right)
}\tau \left( f(\left\Vert \nabla v_{m}\left( \tau \right) \right\Vert
_{L^{2}\left(
\mathbb{R}
^{n}\right) })\Delta v_{m}(\tau ,x)-f(\left\Vert \nabla v_{l}\left( \tau
\right) \right\Vert _{L^{2}\left(
\mathbb{R}
^{n}\right) })\Delta v_{l}(\tau ,x)\right) \right.
\end{equation*}%
\begin{equation*}
\left. \times \phi (x)\left( v_{mt}\left( \tau ,x\right) -v_{lt}\left( \tau
,x\right) \right) dxd\tau \right\vert =0\text{, \ }\forall t\geq 0\text{,}
\end{equation*}%
where $B\left( 0,r\right) =\left\{ x\in
\mathbb{R}
^{n}:\left\vert x\right\vert <r\right\} $.
\end{lemma}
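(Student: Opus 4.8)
The plan is to integrate by parts in the space variable using the localising factor $\phi$, exploiting that $v_{mt}-v_{lt}$ is an exact time derivative so that a perfect-square structure appears, and then to pass to the limit with the help of interior compactness. Throughout write $a_{m}(\tau):=f(\|\nabla v_{m}(\tau)\|_{L^{2}(\mathbb{R}^{n})})$. Since weak-$\ast$ convergence of $\{v_{m}\}$ in $L^{\infty }(0,\infty ;H^{2}(\mathbb{R}^{n}))$ makes it bounded there, the $a_{m}$ are bounded uniformly in $m$ and $\tau$; and since $\{\|\nabla v_{m}(\tau)\|_{L^{2}}\}_{m}$ converges for every $\tau$ and $f\in C^{1}$, $a_{m}(\tau)\to a(\tau):=f(\lim_{k}\|\nabla v_{k}(\tau)\|_{L^{2}})$ pointwise in $\tau$, with $a\in L^{\infty }(0,\infty )$. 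Also, by the Lions--Aubin type compactness theorem (with $H^{2}(B(0,r))\hookrightarrow \hookrightarrow H^{1}(B(0,r))\hookrightarrow L^{2}(B(0,r))$ and $\{v_{mt}\}$ bounded in $L^{\infty }(0,t;L^{2}(B(0,r)))$), $\{v_{m}\}$ is relatively compact in $C([0,t];H^{1}(B(0,r)))$; being weakly-$\ast$ convergent to $v$, it converges strongly, $v_{m}\to v$ in $C([0,t];H^{1}(B(0,r)))$, so in particular $\sup_{0\leq \tau \leq t}\|\nabla (v_{m}-v)(\tau )\|_{L^{2}(B(0,r))}\to 0$.

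Next I would decompose
$$f(\|\nabla v_{m}\|)\Delta v_{m}-f(\|\nabla v_{l}\|)\Delta v_{l}=a_{m}\,\Delta (v_{m}-v_{l})+(a_{m}-a_{l})\,\Delta v_{l}.$$
The contribution of the second summand is bounded, by Cauchy--Schwarz, the uniform $H^{2}$- and $L^{2}$-bounds and $\tau \leq t$, by $C\,t\int_{0}^{t}|a_{m}(\tau )-a_{l}(\tau )|\,d\tau \leq C\,t\int_{0}^{t}(|a_{m}-a|+|a-a_{l}|)\,d\tau $; letting $l\to \infty $ kills the $a_{l}$-term by dominated convergence, and then $m\to \infty $ kills the $a_{m}$-term. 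For the first summand, write $v_{mt}-v_{lt}=\partial _{\tau }z$ with $z:=v_{m}-v_{l}$ and use the identity, valid for a.e.\ $\tau $,
$$\int_{B(0,r)}\phi \,\Delta z\,\partial _{\tau }z\,dx=-\int_{B(0,r)}\partial _{\tau }z\,(\nabla \phi \cdot \nabla z)\,dx-\tfrac{1}{2}\,\tfrac{d}{d\tau }\!\int_{B(0,r)}\phi \,|\nabla z(\tau )|^{2}\,dx,$$
which follows from expanding the difference quotient of $\int \phi |\nabla z|^{2}$, integrating by parts in $x$ (no boundary terms, since $\phi \in C_{0}^{1}(B(0,r))$), and using $\partial _{\tau }z\in L^{2}$ together with $z\in C([0,t];H^{1}(B(0,r)))$ and weak continuity of $z$ into $H^{2}(B(0,r))$. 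This reduces the first summand's contribution to $U_{m,l}+V_{m,l}$, where $U_{m,l}=-\int_{0}^{t}\tau a_{m}(\tau )\int_{B(0,r)}\partial _{\tau }z\,(\nabla \phi \cdot \nabla z)\,dx\,d\tau $ and $V_{m,l}=-\tfrac{1}{2}\int_{0}^{t}\tau a_{m}(\tau )\,\Phi _{m,l}^{\prime }(\tau )\,d\tau $ with $\Phi _{m,l}(\tau ):=\int_{B(0,r)}\phi |\nabla (v_{m}-v_{l})(\tau )|^{2}\,dx$.

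The term $U_{m,l}$ is handled by the interior compactness: since $\|\partial _{\tau }z\|_{L^{2}}\leq C$ uniformly, $|U_{m,l}|\leq C\int_{0}^{t}\|\nabla (v_{m}-v_{l})(\tau )\|_{L^{2}(B(0,r))}\,d\tau $, and taking $\limsup _{l}$ and then $\lim _{m}$ and using $v_{l}\to v$, $v_{m}\to v$ in $C([0,t];H^{1}(B(0,r)))$ gives $\lim _{m}\limsup _{l}|U_{m,l}|=0$. The term $V_{m,l}$ is the main point. Writing $\Phi _{m,l}(\tau _{1})-\Phi _{m,l}(\tau _{2})=-\int_{B(0,r)}(z(\tau _{1})-z(\tau _{2}))\,\mathrm{div}\big(\phi \nabla (z(\tau _{1})+z(\tau _{2}))\big)\,dx$ and using $\|z(\tau _{1})-z(\tau _{2})\|_{L^{2}}\leq C|\tau _{1}-\tau _{2}|$ together with the uniform $H^{2}$-bound shows $\{\Phi _{m,l}\}$ is uniformly Lipschitz on $[0,t]$, hence $\{\Phi _{m,l}^{\prime }\}$ is bounded in $L^{\infty }(0,t)$. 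As $l\to \infty $, $\Phi _{m,l}\to \Phi _{m}:=\int_{B(0,r)}\phi |\nabla (v_{m}-v)(\tau )|^{2}\,dx$ uniformly on $[0,t]$, so $\Phi _{m,l}^{\prime }\rightharpoonup \Phi _{m}^{\prime }$ weakly-$\ast $ in $L^{\infty }(0,t)$; since $\tau a_{m}(\tau )\in L^{1}(0,t)$, $\limsup _{l}V_{m,l}=-\tfrac{1}{2}\int_{0}^{t}\tau a_{m}(\tau )\Phi _{m}^{\prime }(\tau )\,d\tau $. Finally $\Phi _{m}\to 0$ uniformly on $[0,t]$ as $m\to \infty $, hence $\Phi _{m}^{\prime }\rightharpoonup 0$ weakly-$\ast $ in $L^{\infty }(0,t)$; splitting $a_{m}=a+(a_{m}-a)$ gives $\int_{0}^{t}\tau a(\tau )\Phi _{m}^{\prime }(\tau )\,d\tau \to 0$ and $\big|\int_{0}^{t}\tau (a_{m}-a)\Phi _{m}^{\prime }\big|\leq t\|\Phi _{m}^{\prime }\|_{L^{\infty }}\int_{0}^{t}|a_{m}-a|\to 0$, whence $\lim _{m}\limsup _{l}V_{m,l}=0$. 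Combining the three contributions proves the claim.

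I expect the main obstacle to be the term $V_{m,l}$: because $\tau \mapsto f(\|\nabla v_{m}(\tau )\|)$ need not be of bounded variation, one cannot integrate by parts in $\tau $ there, so one must instead use that $\Phi _{m,l}$ is uniformly Lipschitz (its derivatives bounded in $L^{\infty }$) and pass to the weak-$\ast $ limit of $\Phi _{m,l}^{\prime }$, which is null in the iterated limit precisely because $\Phi _{m,l}\to \Phi _{m}\to 0$ uniformly; this is also where the hypothesis that $\{\|\nabla v_{m}(t)\|\}$ converge for every $t$ enters. The other recurring point of care is that $v_{mt}-v_{lt}$ lies only in $L^{2}$, so the spatial integration by parts must be organized so the time-derivative factor is never differentiated in $x$ --- this is exactly why one keeps the exact-square combination $\phi \,\nabla z\cdot \nabla \partial _{\tau }z=\tfrac{1}{2}\partial _{\tau }(\phi |\nabla z|^{2})$.
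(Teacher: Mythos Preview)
Your argument is correct and parallels the paper's in its overall architecture: the same algebraic decomposition into a ``difference of coefficients'' piece and a ``common coefficient times $\Delta(v_m-v_l)$'' piece, the same spatial integration by parts producing the $\nabla\phi$ term and the perfect-square term, and the same appeal to an Aubin--Lions/Simon compactness result to obtain $v_m\to v$ strongly in $C([0,t];H^{2-\varepsilon}(B(0,r)))$ and thereby kill the lower-order contributions. The genuine divergence is in how the perfect-square term $V_{m,l}=-\tfrac12\int_0^t\tau\, a_m(\tau)\,\Phi'_{m,l}(\tau)\,d\tau$ is handled. You observe that $\Phi_{m,l}$ is uniformly Lipschitz on $[0,t]$, hence $\Phi'_{m,l}$ is bounded in $L^\infty(0,t)$, and then pass to the weak-$*$ limit of $\Phi'_{m,l}$ against the fixed $L^1$ function $\tau a_m$ (and later $\tau a$). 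The paper instead introduces the truncation $f_\varepsilon(s)=f(\max\{s,\varepsilon\})$: since $f_\varepsilon$ is globally Lipschitz and $\|\nabla v_l(\cdot)\|_{L^2}^2$ has bounded time derivative, the map $\tau\mapsto f_\varepsilon(\|\nabla v_l(\tau)\|_{L^2})$ lies in $W^{1,\infty}(0,\infty)$, so one may integrate by parts in $\tau$ directly; every surviving term then carries a factor $\int\phi\,|\nabla(v_m-v_l)|^2$, which vanishes by compactness, and the truncation error is controlled by $c\,t^2\max_{0\le s_1,s_2\le\varepsilon}|f(s_1)-f(s_2)|\to0$ as $\varepsilon\to0$. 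Your route is conceptually cleaner (no auxiliary $f_\varepsilon$), while the paper's is a touch more elementary (no weak-$*$ compactness in $L^\infty$ needed); both encode the same structural fact that the missing time regularity of $f(\|\nabla v_m\|)$ is compensated by the uniform Lipschitz continuity of $\tau\mapsto\int\phi\,|\nabla(v_m-v_l)(\tau)|^2$.
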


\begin{proof}
Firstly, we have%
\begin{equation*}
\left\vert \int\limits_{0}^{t}\int\limits_{B\left( 0,r\right) }\tau \left(
f(\left\Vert \nabla v_{m}\left( \tau \right) \right\Vert _{L^{2}\left(
\mathbb{R}
^{n}\right) })\Delta v_{m}(\tau ,x)-f(\left\Vert \nabla v_{l}\left( \tau
\right) \right\Vert _{L^{2}\left(
\mathbb{R}
^{n}\right) })\Delta v_{l}(\tau ,x)\right) \right.
\end{equation*}%
\begin{equation*}
\left. \times \phi (x)\left( v_{mt}\left( \tau ,x\right) -v_{lt}\left( \tau
,x\right) \right) dxd\tau \right\vert
\end{equation*}%
\begin{equation}
\leq \frac{1}{2}\left\vert \int\limits_{0}^{t}\tau f\left( \left\Vert \nabla
v_{l}\left( \tau \right) \right\Vert _{L^{2}\left(
\mathbb{R}
^{n}\right) }\right) \frac{d}{d\tau }\int\limits_{B\left( 0,r\right) }\phi
(x)\left\vert \nabla v_{m}(\tau ,x)-\nabla v_{l}(\tau ,x)\right\vert
^{2}dxd\tau \right\vert +\left\vert K_{r}^{m,l}\left( t\right) \right\vert
\text{,}  \tag{2.1}
\end{equation}%
where $K_{r}^{m,l}\left( t\right) =$ $\int\limits_{0}^{t}\tau \left(
f(\left\Vert \nabla v_{m}\left( \tau \right) \right\Vert _{L^{2}\left(
\mathbb{R}
^{n}\right) })-f(\left\Vert \nabla v_{l}\left( \tau \right) \right\Vert
_{L^{2}\left(
\mathbb{R}
^{n}\right) })\right) \int\limits_{B\left( 0,r\right) }$ $\phi (x)\Delta
v_{m}(\tau ,x)$\newline
$\times \left( v_{mt}\left( \tau ,x\right) -v_{lt}\left( t,x\right) \right)
dxd\tau -$ $\int\limits_{0}^{t}\int\limits_{B\left( 0,r\right) }\tau f\left(
\left\Vert \nabla v_{l}\left( \tau \right) \right\Vert _{L^{2}\left(
\mathbb{R}
^{n}\right) }\right) \nabla \phi (x)\cdot \nabla \left( v_{m}(\tau
,x)-v_{l}(\tau ,x)\right) $\newline
$\times \left( v_{mt}\left( \tau ,x\right) -v_{lt}\left( \tau ,x\right)
\right) dxd\tau $. Applying \cite[Corollary 4]{15}, we have that the
sequence $\left\{ v_{m}\right\} _{m=1}^{\infty }$ $\ $is relatively compact
in $C\left( \left[ 0,T\right] ;H^{2-\varepsilon }\left( B\left( 0,r\right)
\right) \right) $, for every $\varepsilon >0$, $T>0$ and $r>0$. So,%
\begin{equation}
v_{m}\rightarrow v\text{ strongly in }C\left( \left[ 0,T\right]
;H^{2-\varepsilon }\left( B\left( 0,r\right) \right) \right) ,  \tag{2.2}
\end{equation}%
for some $v\in $ $C\left( \left[ 0,T\right] ;H^{2-\varepsilon }\left(
B\left( 0,r\right) \right) \right) $. Hence, we find
\begin{equation}
\underset{m\rightarrow \infty }{\text{ }\lim \text{ }}\underset{l\rightarrow
\infty }{\lim \sup }\left\vert K_{r}^{m,l}\left( t\right) \right\vert =0%
\text{, \ \ }\forall t\geq 0\text{.}  \tag{2.3}
\end{equation}%
Now, denoting $f_{\varepsilon }\left( u\right) $ $=\left\{
\begin{array}{c}
f\left( u\right) ,\text{ }u\geq \varepsilon \\
f\left( \varepsilon \right) ,\text{ }0\leq u<\varepsilon%
\end{array}%
\right. $ for $\varepsilon >0$, we get%
\begin{equation*}
\left\vert f\left( \left\Vert \nabla v_{l}\left( \tau \right) \right\Vert
_{L^{2}\left(
\mathbb{R}
^{n}\right) }\right) -f_{\varepsilon }\left( \left\Vert \nabla v_{l}\left(
\tau \right) \right\Vert _{L^{2}\left(
\mathbb{R}
^{n}\right) }\right) \right\vert \leq \max_{0\leq s_{1},s_{2}\leq
\varepsilon }\left\vert f\left( s_{1}\right) -f\left( s_{2}\right)
\right\vert ,
\end{equation*}%
and then, for the first term on the right hand side of (2.1), we obtain%
\begin{equation*}
\left\vert \int\limits_{0}^{t}\tau f\left( \left\Vert \nabla v_{l}\left(
\tau \right) \right\Vert _{L^{2}\left(
\mathbb{R}
^{n}\right) }\right) \frac{d}{d\tau }\int\limits_{B\left( 0,r\right) }\phi
(x)\left\vert \nabla v_{m}(\tau ,x)-\nabla v_{l}(\tau ,x)\right\vert
^{2}dxd\tau \right\vert
\end{equation*}%
\begin{equation*}
\leq \left\vert \int\limits_{0}^{t}\tau f_{\varepsilon }\left( \left\Vert
\nabla v_{l}\left( \tau \right) \right\Vert _{L^{2}\left(
\mathbb{R}
^{n}\right) }\right) \frac{d}{d\tau }\int\limits_{B\left( 0,r\right) }\phi
(x)\left\vert \nabla v_{m}(\tau ,x)-\nabla v_{l}(\tau ,x)\right\vert
^{2}dxd\tau \right\vert
\end{equation*}%
\begin{equation}
+c_{1}\text{ }t^{2}\max_{0\leq s_{1},s_{2}\leq \varepsilon }\left\vert
f\left( s_{1}\right) -f\left( s_{2}\right) \right\vert \text{, \ }\forall
t\geq 0\text{.}  \tag{2.4}
\end{equation}%
Let us estimate the first term on the right hand side of (2.4). By using
integration by parts, we have%
\begin{equation*}
\int\limits_{0}^{t}\tau f_{\varepsilon }\left( \left\Vert \nabla v_{l}\left(
\tau \right) \right\Vert _{L^{2}\left(
\mathbb{R}
^{n}\right) }\right) \frac{d}{d\tau }\int\limits_{B\left( 0,r\right) }\phi
(x)\left\vert \nabla v_{m}(\tau ,x)-\nabla v_{l}(\tau ,x)\right\vert
^{2}dxd\tau
\end{equation*}%
\begin{equation*}
=tf_{\varepsilon }\left( \left\Vert \nabla v_{l}\left( t\right) \right\Vert
_{L^{2}\left(
\mathbb{R}
^{n}\right) }\right) \int\limits_{B\left( 0,r\right) }\phi (x)\left\vert
\nabla v_{m}(\tau ,x)-\nabla v_{l}(\tau ,x)\right\vert ^{2}dx
\end{equation*}%
\begin{equation*}
-\int\limits_{0}^{t}f_{\varepsilon }\left( \left\Vert \nabla v_{l}\left(
\tau \right) \right\Vert _{L^{2}\left(
\mathbb{R}
^{n}\right) }\right) \int\limits_{B\left( 0,r\right) }\phi (x)\left\vert
\nabla v_{m}(\tau ,x)-\nabla v_{l}(\tau ,x)\right\vert ^{2}dxd\tau
\end{equation*}%
\begin{equation}
-\int\limits_{0}^{t}\tau \frac{d}{dt}\left( f_{\varepsilon }\left(
\left\Vert \nabla v_{l}\left( \tau \right) \right\Vert _{L^{2}\left(
\mathbb{R}
^{n}\right) }\right) \right) \int\limits_{B\left( 0,r\right) }\phi
(x)\left\vert \nabla v_{m}(\tau ,x)-\nabla v_{l}(\tau ,x)\right\vert
^{2}dxd\tau .  \tag{2.5}
\end{equation}%
By the conditions of the lemma and the definition of $f_{\varepsilon }$,\ it
follows that $\left\{ f_{\varepsilon }\left( \left\Vert \nabla v_{m}\left(
\cdot \right) \right\Vert _{L^{2}\left(
\mathbb{R}
^{n}\right) }\right) \right\} _{m=1}^{\infty }$ is bounded in $W^{1,\infty
}\left( 0,\infty \right) $. Then, considering (2.2) in (2.5), we get%
\begin{equation}
\underset{m\rightarrow \infty }{\text{ }\lim \text{ }}\underset{l\rightarrow
\infty }{\lim \sup }\left\vert \int\limits_{0}^{t}\tau f_{\varepsilon
}\left( \left\Vert \nabla v_{l}\left( \tau \right) \right\Vert _{L^{2}\left(
\mathbb{R}
^{n}\right) }\right) \frac{d}{d\tau }\int\limits_{B\left( 0,r\right) }\phi
(x)\left\vert \nabla v_{m}(\tau ,x)-\nabla v_{l}(\tau ,x)\right\vert
^{2}dxd\tau \right\vert =0.  \tag{2.6}
\end{equation}%
Taking into account (2.3), (2.4) and (2.6) in (2.1), we obtain%
\begin{equation*}
\underset{m\rightarrow \infty }{\lim \sup }\text{ }\underset{l\rightarrow
\infty }{\lim \sup }\left\vert \int\limits_{0}^{t}\int\limits_{B\left(
0,r\right) }\tau \left( f(\left\Vert \nabla v_{m}\left( t\right) \right\Vert
_{L^{2}\left(
\mathbb{R}
^{n}\right) }))\Delta v_{m}(t,x)-f(\left\Vert \nabla v_{l}\left( t\right)
\right\Vert _{L^{2}\left(
\mathbb{R}
^{n}\right) }))\Delta v_{l}(t,x)\right) \right.
\end{equation*}%
\begin{equation*}
\left. \times \phi (x)\left( v_{mt}\left( \tau ,x\right) -v_{lt}\left( \tau
,x\right) \right) dxd\tau \right\vert \leq c_{1}\text{ }t^{2}\max_{0\leq
s_{1},s_{2}\leq \varepsilon }\left\vert f\left( s_{1}\right) -f\left(
s_{2}\right) \right\vert \text{, \ }\forall t\geq 0\text{,}
\end{equation*}%
which yields the claim of the lemma, since $\varepsilon >0$ is arbitrary.
\end{proof}

\begin{lemma}
Assume that the condition (1.7) holds. Also, let the sequence $\left\{
v_{m}\right\} _{m=1}^{\infty }$ be weakly star convergent in $L^{\infty
}\left( 0,\infty ;H^{2}\left(
\mathbb{R}
^{n}\right) \right) $ and the sequence $\left\{ v_{mt}\right\}
_{m=1}^{\infty }$ be bounded in $L^{\infty }\left( 0,\infty ;L^{2}\left(
\mathbb{R}
^{n}\right) \right) $. Then, for every $r>0$ and $\phi \in L^{\infty
}(B\left( 0,r\right) )$
\begin{equation*}
\underset{m\rightarrow \infty }{\lim }\text{ }\underset{l\rightarrow \infty }%
{\lim }\int\limits_{0}^{t}\int\limits_{B\left( 0,r\right) }\tau (g\left(
v_{m}\left( \tau ,x\right) \right) -g\left( v_{l}\left( \tau ,x\right)
\right) )\phi (x)\left( v_{mt}\left( \tau ,x\right) -v_{lt}\left( \tau
,x\right) \right) dxd\tau =0\text{, \ \ }\forall t\geq 0\text{.}
\end{equation*}
\end{lemma}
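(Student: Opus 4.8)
The plan is to reduce the double limit to a statement about strong convergence of $v_m$ in a space in which the nonlinear term $g(v_m)$ behaves well, then to absorb the time-derivative factor $v_{mt}-v_{lt}$ via an integration by parts in $\tau$, exactly as in the proof of Lemma 2.1. First I would record that, by the assumed weak-star convergence of $\{v_m\}$ in $L^\infty(0,\infty;H^2(\mathbb{R}^n))$ together with boundedness of $\{v_{mt}\}$ in $L^\infty(0,\infty;L^2(\mathbb{R}^n))$, the Aubin--Lions--Simon compactness result (\cite[Corollary 4]{15}) gives, after passing to a subsequence, $v_m\to v$ strongly in $C([0,T];H^{2-\varepsilon}(B(0,r)))$ for every $\varepsilon>0$, $T>0$, $r>0$; in particular, choosing $\varepsilon$ small, by Sobolev embedding $v_m\to v$ strongly in $C([0,T];L^{q}(B(0,r)))$ for a suitable $q$ adapted to the growth exponent $p$ in (1.7).

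Next I would introduce the primitive $G_1(s)=\int_0^s g(\sigma)\,d\sigma$ and rewrite
\begin{equation*}
(g(v_m)-g(v_l))(v_{mt}-v_{lt})=\frac{d}{d\tau}\big(G_1(v_m)-G_1(v_l)\big)-\big(g'(v_m)-g'(v_l)\big)v_{lt}\,(v_m-v_l)-\text{(cross terms)},
\end{equation*}
so that, after multiplying by $\tau\phi(x)$ and integrating over $[0,t]\times B(0,r)$, an integration by parts in $\tau$ transfers the $\tau$-derivative off the factor $G_1(v_m)-G_1(v_l)$, producing boundary terms at $\tau=t$ and an interior integral of $\big(G_1(v_m)-G_1(v_l)\big)\phi(x)$ against bounded quantities. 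The boundary and interior terms of this type converge to zero because, by the growth bound (1.7) and the strong $L^q$ convergence just established, $G_1(v_m)\to G_1(v)$ strongly in $C([0,T];L^1(B(0,r)))$ (using that $|G_1(s)|\le C(|s|+|s|^{p+1})$ and that $(n-4)p\le n$ guarantees $p+1$ stays below the critical Sobolev exponent of $H^2$, so the sequence is equi-integrable). For the remaining term involving $g'(v_m)-g'(v_l)$ one uses $|g'(v_m)-g'(v_l)|\le C(1+|v_m|^{p-1}+|v_l|^{p-1})$, Hölder's inequality on $B(0,r)$ splitting the three factors $|g'(v_m)-g'(v_l)|$, $|v_{lt}|$, $|v_m-v_l|$ into $L^{q_1}\times L^2\times L^{q_2}$ with $\tfrac{1}{q_1}+\tfrac{1}{q_2}=\tfrac12$, and the strong convergence $v_m-v_l\to 0$ in $L^{q_2}(B(0,r))$ kills this contribution while the other two factors stay bounded.

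The main obstacle, and the point requiring the most care, is the exponent bookkeeping: one must check that the exponents $q$, $q_1$, $q_2$ demanded by the Hölder splitting are simultaneously (i) below the Sobolev exponent $2^{**}$ associated with the embedding $H^2(\mathbb{R}^n)\hookrightarrow L^{2^{**}}$, so that boundedness in $L^\infty(0,\infty;H^2)$ yields boundedness of $\{|v_m|^{p-1}\}$ and $\{v_m-v_l\}$ in the relevant $L^{q_i}$, and (ii) strictly below the critical level on the bounded set $B(0,r)$, so that the strong $H^{2-\varepsilon}$ convergence upgrades to strong $L^{q}$ convergence for $\varepsilon$ small enough; condition $(n-4)p\le n$ in (1.7) is precisely what makes this possible, but verifying it in the borderline cases ($n\le 4$, where $H^2$ embeds into every $L^q$, versus $n\ge 5$, where the restriction is genuine) is where the argument must be done carefully. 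Once the exponents are validated, the conclusion follows by first letting $l\to\infty$ and then $m\to\infty$ exactly as at the end of the proof of Lemma 2.1.
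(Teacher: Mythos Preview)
Your overall strategy (Aubin--Lions--Simon compactness, then an integration by parts via the primitive $G$) is the right idea, but the algebraic identity you write down is incorrect: differentiating $G(v_m)-G(v_l)$ gives $g(v_m)v_{mt}-g(v_l)v_{lt}$, and subtracting this from $(g(v_m)-g(v_l))(v_{mt}-v_{lt})$ leaves the two remainder terms
\[
-(g(v_m)-g(v_l))\,v_{lt}\qquad\text{and}\qquad -g(v_l)\,(v_{mt}-v_{lt}),
\]
not anything involving $g'(v_m)-g'(v_l)$. So the term you propose to kill by a three-factor H\"older splitting simply does not arise, and the actual remainders need a different treatment (mean-value theorem on $g$ for the first, a strong-versus-weak pairing for the second). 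Your argument as written therefore has a genuine gap at the decomposition step, although it is repairable along the lines you indicate.

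The paper sidesteps all of this---and in particular the exponent bookkeeping you flag as the main obstacle---by expanding the product directly into the four terms $g(v_m)v_{mt}$, $g(v_l)v_{lt}$, $-g(v_m)v_{lt}$, $-g(v_l)v_{mt}$ and treating each separately. The two diagonal terms are exact $\tau$-derivatives $\frac{d}{d\tau}G(v_k)$, so after integrating by parts in $\tau$ one only needs the strong convergence $G(v_m)\to G(v)$ in $C([0,T];L^1(B(0,r)))$, which follows immediately from $v_m\to v$ in $C([0,T];H^{2-\varepsilon}(B(0,r)))$ and the growth bound (1.7). The two cross terms are handled by letting $l\to\infty$ using weak-star convergence $v_{lt}\rightharpoonup v_t$ against the fixed function $\tau\phi\,g(v_m)\in L^1(0,t;L^2(B(0,r)))$, respectively strong convergence $g(v_l)\to g(v)$ in $L^2(B(0,r))$ against the bounded $v_{mt}$, and then $m\to\infty$ similarly; the resulting limits of the diagonal and cross pairs coincide up to sign and cancel. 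No three-factor H\"older inequality enters, and no case distinction between $n\le 4$ and $n\ge 5$ is needed.
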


\begin{proof}
We have%
\begin{equation*}
\int\limits_{0}^{t}\int\limits_{B\left( 0,r\right) }\tau \left( g\left(
v_{m}\left( \tau ,x\right) \right) -g\left( v_{l}\left( \tau ,x\right)
\right) \right) \phi (x)\left( v_{mt}\left( \tau ,x\right) -v_{lt}\left(
\tau ,x\right) \right) dxd\tau
\end{equation*}%
\begin{equation*}
=\int\limits_{0}^{t}\int\limits_{B\left( 0,r\right) }\tau \phi (x)g\left(
v_{m}\left( \tau ,x\right) \right) v_{mt}\left( \tau ,x\right) dxd\tau
+\int\limits_{0}^{t}\int\limits_{B\left( 0,r\right) }\tau \phi (x)g\left(
v_{l}\left( \tau ,x\right) \right) v_{lt}\left( \tau ,x\right) dxd\tau
\end{equation*}%
\begin{equation}
-\int\limits_{0}^{t}\int\limits_{B\left( 0,r\right) }\tau \phi (x)g\left(
v_{m}\left( \tau ,x\right) \right) v_{lt}\left( \tau ,x\right) dxd\tau
-\int\limits_{0}^{t}\int\limits_{B\left( 0,r\right) }\tau \phi (x)g\left(
v_{l}\left( \tau ,x\right) \right) v_{mt}\left( \tau ,x\right) dxd\tau .
\tag{2.7}
\end{equation}%
Let us estimate the first two terms on the right hand side of (2.7).
Applying integration by parts, we get%
\begin{equation*}
\int\limits_{0}^{t}\int\limits_{B\left( 0,r\right) }\tau \phi (x)g\left(
v_{m}\left( \tau ,x\right) \right) v_{mt}\left( \tau ,x\right) dxd\tau
+\int\limits_{0}^{t}\int\limits_{B\left( 0,r\right) }\tau \phi (x)g\left(
v_{l}\left( \tau ,x\right) \right) v_{lt}\left( \tau ,x\right) dxd\tau
\end{equation*}%
\begin{equation*}
=\int\limits_{0}^{t}\tau \frac{d}{d\tau }\left( \int\limits_{B\left(
0,r\right) }\phi (x)G\left( v_{m}\left( \tau ,x\right) \right) dx\right)
d\tau +\int\limits_{0}^{t}\tau \frac{d}{d\tau }\left( \int\limits_{B\left(
0,r\right) }\phi (x)G\left( v_{l}\left( \tau ,x\right) \right) dx\right)
d\tau
\end{equation*}%
\begin{equation*}
=t\int\limits_{B\left( 0,r\right) }\phi (x)G\left( v_{m}\left( t,x\right)
\right) dx+t\int\limits_{B\left( 0,r\right) }\phi (x)G\left( v_{l}\left(
\tau ,x\right) \right) dx
\end{equation*}%
\begin{equation}
-\int\limits_{0}^{t}\int\limits_{B\left( 0,r\right) }\phi (x)G\left(
v_{m}\left( \tau ,x\right) \right) dxd\tau
-\int\limits_{0}^{t}\int\limits_{B\left( 0,r\right) }\phi (x)G\left(
v_{l}\left( \tau ,x\right) \right) dxd\tau .  \tag{2.8}
\end{equation}%
By the conditions of the lemma, we obtain%
\begin{equation}
\left\{
\begin{array}{c}
v_{m}\rightarrow v\text{ weakly star in }L^{\infty }\left( 0,\infty
;H^{2}\left(
\mathbb{R}
^{n}\right) \right) \text{,} \\
v_{mt}\rightarrow v_{t}\text{ weakly star in }L^{\infty }\left( 0,\infty
;L^{2}\left(
\mathbb{R}
^{n}\right) \right) \text{,}%
\end{array}%
\right.  \tag{2.9}
\end{equation}%
for some $v\in L^{\infty }\left( 0,\infty ;H^{2}\left(
\mathbb{R}
^{n}\right) \right) \cap W^{1,\infty }\left( 0,\infty ;L^{2}\left(
\mathbb{R}
^{n}\right) \right) .$ Applying \cite[Corollary 4]{15}, by (2.9), we have%
\begin{equation*}
v_{m}\rightarrow v\text{ strongly in }C\left( \left[ 0,T\right]
;H^{2-\varepsilon }\left( B\left( 0,r\right) \right) \right) ,
\end{equation*}%
for every $\varepsilon >0$ and $T>0$. Hence, taking into account (1.7), we
get%
\begin{equation}
G\left( v_{m}\right) \rightarrow G\left( v\right) \text{ strongly in }%
C\left( [0,T];L^{1}\left( B\left( 0,r\right) \right) \right) \text{.}
\tag{2.10}
\end{equation}%
Then, passing to the limit in (2.8) and using (2.10), we obtain%
\begin{equation*}
\underset{m\rightarrow \infty }{\lim }\text{ }\underset{l\rightarrow \infty }%
{\lim }\left( \int\limits_{0}^{t}\int\limits_{B\left( 0,r\right) }\tau \phi
(x)g\left( v_{m}\left( \tau ,x\right) \right) v_{mt}\left( \tau ,x\right)
dxd\tau +\int\limits_{0}^{t}\int\limits_{B\left( 0,r\right) }\tau \phi
(x)g\left( v_{l}\left( \tau ,x\right) \right) v_{lt}\left( \tau ,x\right)
dxd\tau \right)
\end{equation*}%
\begin{equation}
=2t\int\limits_{B\left( 0,r\right) }\phi (x)G\left( v\left( t,x\right)
\right) dx-2\int\limits_{0}^{t}\int\limits_{B\left( 0,r\right) }\phi
(x)G\left( v\left( \tau ,x\right) \right) dxd\tau .  \tag{2.11}
\end{equation}%
Now, for the last two terms on the right hand side of (2.7), considering
(2.9), we get%
\begin{equation*}
\underset{m\rightarrow \infty }{\lim }\text{ }\underset{l\rightarrow \infty }%
{\lim }\left( -\int\limits_{0}^{t}\int\limits_{B\left( 0,r\right) }\tau \phi
(x)g\left( v_{m}\left( \tau ,x\right) \right) v_{lt}\left( \tau ,x\right)
dxd\tau -\int\limits_{0}^{t}\int\limits_{B\left( 0,r\right) }\tau \phi
(x)g\left( v_{l}\left( \tau ,x\right) \right) v_{mt}\left( \tau ,x\right)
dxd\tau \right)
\end{equation*}%
\begin{equation*}
=-2\int\limits_{0}^{t}\int\limits_{B\left( 0,r\right) }\tau \phi (x)g\left(
v\left( \tau ,x\right) \right) v_{t}\left( \tau ,x\right) dxd\tau
\end{equation*}%
\begin{equation}
=-2t\int\limits_{B\left( 0,r\right) }\phi (x)G\left( v\left( t,x\right)
\right) dx+2\int\limits_{0}^{t}\int\limits_{B\left( 0,r\right) }\phi
(x)G\left( v\left( \tau ,x\right) \right) dxd\tau .  \tag{2.12}
\end{equation}%
Hence, considering (2.11)-(2.12) and passing to the limit in (2.7), we
obtain the claim of the lemma.
\end{proof}

Now, we can prove the asymptotic compactness of $\left\{ S\left( t\right)
\right\} _{t\geq 0}$ in the interior domain.

\begin{theorem}
Assume that the conditions (1.3)-(1.8) hold and $\mathcal{B}$ is a bounded
subset of$\ H^{2}\left(
\mathbb{R}
^{n}\right) \times L^{2}\left(
\mathbb{R}
^{n}\right) $. Then every sequence of the form $\left\{ S(t_{k})\varphi
_{k}\right\} _{k=1}^{\infty },$ where $\left\{ \varphi _{k}\right\}
_{k=1}^{\infty }\subset \mathcal{B}$, $t_{k}\rightarrow \infty ,$ \ has a
convergent subsequence in $H^{2}\left( B\left( 0,r\right) \right) \times
L^{2}\left( B\left( 0,r\right) \right) $, for every $r>0$.
\end{theorem}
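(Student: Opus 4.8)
The plan is to run the usual time-shift / weak-limit scheme, the decisive new ingredients being a \emph{localized} energy identity weighted by $\tau$ (to absorb the non-convergent data), Lemmas 2.1--2.2 (to dispose of the nonlocal and superlinear terms), and the geometric condition (1.4) (to dispose of the commutators produced by the cut-off). First, from the energy equality (1.9), together with $\lambda>0$, $f\ge0$, $G\ge0$, $g(s)s\ge0$ and Young's inequality applied to $\int hu$, the functional $\mathcal E(u)(t):=E(u(t))+\int G(u(t))+\tfrac12F(\|\nabla u(t)\|_{L^{2}}^{2})-\int hu(t)$ is non-increasing along trajectories and bounded below; hence $\{S(t)\}$ has a bounded absorbing set, every trajectory issuing from $\mathcal B$ stays in a fixed ball of $H^{2}(\mathbb R^{n})\times L^{2}(\mathbb R^{n})$, and $\int_{0}^{\infty}\!\int_{\mathbb R^{n}}(\alpha|u_{t}|^{2}+\beta|\nabla u_{t}|^{2})\le C$ uniformly in the datum. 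Since the restriction $H^{2}(B(0,r_{0}))\to H^{2}(B(0,r))$ is bounded for $r\le r_{0}$, it suffices to treat $r\ge r_{0}$, which we now assume; fix $\psi\in C_{0}^{\infty}(\mathbb R^{n})$ with $0\le\psi\le1$, $\psi\equiv1$ on $B(0,r)$, $\operatorname{supp}\psi\subset B(0,2r)$, so that $\operatorname{supp}\nabla\psi\cup\operatorname{supp}\Delta\psi\subset\{|x|\ge r_{0}\}$, where $\alpha\ge\alpha_{0}$ and $\beta\ge\beta_{0}$ by (1.4).

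Write $S(t_{k})\varphi_{k}=(u^{k}(t_{k}),u^{k}_{t}(t_{k}))$ and, for a fixed $T>0$ and $k$ large, set $v_{k}(\cdot):=u^{k}(t_{k}-T+\cdot)$ on $[0,T]$, so that $(v_{k}(T),v_{kt}(T))=S(t_{k})\varphi_{k}$; each $v_{k}$ solves (1.1) with coefficient $f(\|\nabla v_{k}(t)\|_{L^{2}(\mathbb R^{n})})$, and $(v_{k},v_{kt})$ is bounded in $L^{\infty}(0,T;H^{2})\times L^{\infty}(0,T;L^{2})$. Passing to a subsequence, $v_{k}\rightharpoonup v$ $*$-weakly in $L^{\infty}(0,T;H^{2})$ and $v_{kt}\rightharpoonup v_{t}$ $*$-weakly in $L^{\infty}(0,T;L^{2})$, and by \cite[Corollary 4]{15} (applied on each ball $B(0,\rho)$ via $H^{2}\hookrightarrow\hookrightarrow H^{2-\varepsilon}\hookrightarrow L^{2}$) $v_{k}\to v$ strongly in $C([0,T];H^{2-\varepsilon}(B(0,\rho)))$ for every $\varepsilon,\rho>0$; in particular $v_{k}(s)\to v(s)$ in $H^{2-\varepsilon}(B(0,\rho))$ for each $s\in[0,T]$. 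Since $t\mapsto\|\nabla v_{k}(t)\|_{L^{2}(\mathbb R^{n})}^{2}$ is uniformly Lipschitz (its derivative is $2\int\nabla v_{k}\cdot\nabla v_{kt}$), a further subsequence renders $\|\nabla v_{k}(t)\|_{L^{2}(\mathbb R^{n})}$ convergent for all $t\in[0,T]$; hence Lemmas 2.1 and 2.2 apply with $v_{m}=v_{k}$.

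Put $w=w_{kl}:=v_{k}-v_{l}$ and $\mathcal E^{\psi}(w)(t):=\tfrac12\int\psi^{2}(|w_{t}|^{2}+\gamma|\Delta w|^{2}+\lambda|w|^{2})$. Multiplying the difference of equation (1.1) for $v_{k}$ and $v_{l}$ by $\tau\psi^{2}w_{t}$, integrating over $[0,T]\times\mathbb R^{n}$ and integrating by parts (the weight $\tau$ annihilates the contribution at $t=0$, which is bounded but not convergent), one arrives at
\[
T\,\mathcal E^{\psi}(w)(T)+\int_{0}^{T}\!\!\int\tau\psi^{2}\big(\alpha|w_{t}|^{2}+\beta|\nabla w_{t}|^{2}\big)=\int_{0}^{T}\!\mathcal E^{\psi}(w)(\tau)\,d\tau+\mathcal R_{\mathrm{com}}+\mathcal R_{f}+\mathcal R_{g},
\]
where $\mathcal R_{\mathrm{com}}$ collects the cut-off commutators (each carrying a factor $\nabla\psi$ or $\Delta\psi$, hence supported where $\alpha,\beta\ge\min\{\alpha_{0},\beta_{0}\}>0$), $\mathcal R_{f}=-\int_{0}^{T}\!\int\tau\psi^{2}\big(f(\|\nabla v_{k}\|)\Delta v_{k}-f(\|\nabla v_{l}\|)\Delta v_{l}\big)w_{t}$ and $\mathcal R_{g}=\int_{0}^{T}\!\int\tau\psi^{2}\big(g(v_{k})-g(v_{l})\big)w_{t}$. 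By Lemmas 2.1 and 2.2, $\limsup_{k}\limsup_{l}(|\mathcal R_{f}|+|\mathcal R_{g}|)=0$; by Cauchy--Schwarz each commutator in $\mathcal R_{\mathrm{com}}$ is dominated by a bounded factor times $\big(\int_{0}^{T}\!\int(\alpha|w_{t}|^{2}+\beta|\nabla w_{t}|^{2})\big)^{1/2}$. Finally, a companion identity from the multiplier $\psi^{2}w$ (again using the strong $H^{2-\varepsilon}_{loc}$ convergence of $v_{k}$) gives $\gamma\int_{0}^{T}\!\int\psi^{2}|\Delta w|^{2}=\int_{0}^{T}\!\int\psi^{2}|w_{t}|^{2}+o(1)$, while $\int_{0}^{T}\!\int\psi^{2}|w|^{2}=o(1)$; substituting these reduces the whole matter to the two quantities $D_{kl}:=\int_{0}^{T}\!\int(\alpha|w_{kl,t}|^{2}+\beta|\nabla w_{kl,t}|^{2})$ and $\int_{0}^{T}\!\int\psi^{2}|w_{kl,t}|^{2}$.

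Two points now remain, and they are the heart of the proof. (a) One must show $D_{kl}\to0$ in the iterated limit; since $D_{kl}\le 2\big(\mathcal E(u^{k})(t_{k}-T)-\mathcal E(u^{k})(t_{k})\big)+2\big(\mathcal E(u^{l})(t_{l}-T)-\mathcal E(u^{l})(t_{l})\big)$, this means extracting smallness of the energy drop over a length-$T$ window placed at the arbitrarily late time $t_{k}$ — achieved by combining the uniform bound $\int_{0}^{\infty}\!\int(\alpha|u_{t}|^{2}+\beta|\nabla u_{t}|^{2})\le C$ with the monotonicity of $\mathcal E$, if necessary through a Helly-type selection and the choice, among $N$ consecutive length-$T$ windows preceding $t_{k}$, of one over which the drop is $\le C/N$, the conclusion then being carried back to time $t_{k}$ by continuity of $S(t)$ in the local topology (the displacement being bounded). (b) Granted $D_{kl}\to0$, one must still convert it into smallness of $\int_{0}^{T}\!\int\psi^{2}|w_{t}|^{2}$: on $\{\alpha\ge\delta\}$ this is immediate ($\le\delta^{-1}D_{kl}$); on $\{\beta\ge\delta\}$ one controls $\nabla w_{t}$ and closes via Poincaré-type estimates on $B(0,2r)$ together with the weak convergence $v_{kt}\rightharpoonup v_{t}$; and on the residual set $\{\alpha<\delta\}\cap\{\beta<\delta\}$ — whose measure tends to $0$ as $\delta\to0$ \emph{precisely because of hypothesis (1.5)} — one invokes the equi-integrability of $\{|w_{t}|^{2}\}$ coming from the uniform $H^{2}$-bound on $v_{k}$ and Sobolev embedding. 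Squeezing local compactness of the \emph{velocities} out of the \emph{degenerate} damping is exactly the place where $\alpha+\beta>0$ a.e., rather than $\alpha>0$ a.e., is indispensable. Once $\mathcal E^{\psi}(w_{kl})(T)\to0$ — i.e.\ $\|w_{kl}(T)\|_{L^{2}(B(0,r))}+\|w_{kl,t}(T)\|_{L^{2}(B(0,r))}+\|\Delta w_{kl}(T)\|_{L^{2}(B(0,r))}\to0$ — interior elliptic regularity (from $\Delta(u^{k}(t_{k})-u^{l}(t_{l}))\to0$ and $u^{k}(t_{k})-u^{l}(t_{l})\to0$ in $L^{2}(B(0,r))$) upgrades this to convergence of $(u^{k}(t_{k}),u^{k}_{t}(t_{k}))$ in $H^{2}(B(0,r'))\times L^{2}(B(0,r'))$ for every $r'<r$; as $r$ is arbitrary, the required convergent subsequence exists.
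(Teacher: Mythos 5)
Your overall scaffolding (time shifts, $*$-weak limits, the Aubin--Lions compactness of \cite[Corollary 4]{15}, Lemmas 2.1--2.2 for the nonlinear terms, and a $\tau$-weighted energy multiplier to kill the non-convergent initial layer) matches the paper. But the step you yourself call ``the heart of the proof'' --- extracting $\int_{0}^{T}\!\int\psi^{2}|w_{t}|^{2}\to0$ from the damping dissipation $D_{kl}$ --- is where the argument breaks, and it is precisely the step the paper handles by a completely different device. In (a), the pigeonhole selection of a good window does give a window with small dissipation, but its endpoint $s_{k}$ differs from $t_{k}$ by up to $NT$, and ``continuity of $S(t)$ in the local topology'' is not available: the plate operator has infinite propagation speed, so local convergence of data at $s_{k}$ does not transfer to local convergence of $S(t_{k}-s_{k})$-images. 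The paper never proves, and never needs, $D_{kl}\to0$; it only uses the \emph{uniform} bound (2.15) on the exterior dissipation (valid by (1.4)) and defeats those terms by explicit factors $1/r$ and $1/T$. In (b), even granting $D_{kl}\to0$, your treatment of the residual set $\{\alpha<\delta\}\cap\{\beta<\delta\}$ via ``equi-integrability of $|w_{t}|^{2}$'' fails: $v_{kt}$ is bounded only in $L^{\infty}(0,T;L^{2})$, so $|w_{t}|^{2}$ is bounded in $L^{1}$ but not uniformly integrable, and smallness of the measure of that set yields nothing. (Incidentally, hypothesis (1.5) is not used in the paper's proof of this theorem at all; it enters only in the tail estimate via Lemma 2.3 and in the gradient-system argument.)

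The missing idea is the Morawetz/dilation multiplier. The paper multiplies the difference equation (2.19) by $\sum_{i}x_{i}(1-\eta_{4r})(v_{m}-v_{l})_{x_{i}}+\tfrac12(n-1)(1-\eta_{4r})(v_{m}-v_{l})$, and the Rellich--Pohozaev identities for $\Delta^{2}$ and $\partial_{tt}$ produce the coercive left-hand side $\tfrac{3\gamma}{2}\int_{0}^{T}\|\Delta w\|_{L^{2}(B(0,4r))}^{2}+\tfrac12\int_{0}^{T}\|w_{t}\|_{L^{2}(B(0,4r))}^{2}$ \emph{with no help from the damping}; the commutator and boundary contributions live in annuli $|x|\ge r$ where (1.4) makes both dampings uniformly positive, and are controlled by (2.15) together with the space-time tail estimate (2.18) obtained from the $\eta_{r}^{2}v_{m}$ multiplier. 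This local space-time bound (2.21) is then fed into the $t$-weighted energy identity (your $\tau\psi^{2}w_{t}$ step, which in the paper carries the extra correction $\alpha_{0}\eta_{r}^{4}w$), and division by $T$ followed by $T\to\infty$, $r\to\infty$ closes the argument. Without this multiplier, or an equivalent source of interior coercivity for $\|w_{t}\|^{2}$ that does not pass through the possibly vanishing damping coefficients, your proof does not close.
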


\begin{proof}
We will use the asymptotic compactness method introduced in \cite{16}.
Considering (1.3), (1.6), (1.7) and (1.8) in (1.9), we have
\begin{equation}
\underset{t\geq 0}{\sup }\underset{\varphi \in \mathcal{B}}{\sup }\left\Vert
S\left( t\right) \varphi \right\Vert _{H^{2}\left(
\mathbb{R}
^{n}\right) \times L^{2}\left(
\mathbb{R}
^{n}\right) }<\infty .  \tag{2.13}
\end{equation}%
Due to the boundedness of the sequence $\left\{ \varphi _{k}\right\}
_{k=1}^{\infty }$ in $H^{2}\left(
\mathbb{R}
^{n}\right) \times L^{2}\left(
\mathbb{R}
^{n}\right) $, by (2.13), it follows that the sequence $\left\{ S\left(
\cdot \right) \varphi _{k}\right\} _{k=1}^{\infty }$ is bounded in $%
L^{\infty }\left( 0,\infty ;H^{2}\left(
\mathbb{R}
^{n}\right) \times L^{2}\left(
\mathbb{R}
^{n}\right) \right) $. Then for any $T\geq 1$ \ there exists a subsequence $%
\left\{ k_{m}\right\} _{m=1}^{\infty }$ such that $t_{k_{m}}\geq T$, and
\begin{equation}
\left\{
\begin{array}{c}
v_{m}\rightarrow v\text{ weakly star in }L^{\infty }\left( 0,\infty
;H^{2}\left(
\mathbb{R}
^{n}\right) \right) \text{,} \\
v_{mt}\rightarrow v_{t}\text{ weakly star in }L^{\infty }\left( 0,\infty
;L^{2}\left(
\mathbb{R}
^{n}\right) \right) \text{,} \\
\left\Vert \nabla v_{m}\right\Vert _{L^{2}\left(
\mathbb{R}
^{n}\right) }^{2}\rightarrow q\text{ weakly star in }W^{1,\infty }\left(
0,\infty \right) \text{,} \\
v_{m}\rightarrow v\text{ strongly in }C\left( \left[ 0,T\right]
;H^{2-\varepsilon }\left( B\left( 0,r\right) \right) \right) \text{, \ }%
\varepsilon >0\text{,}%
\end{array}%
\right.  \tag{2.14}
\end{equation}%
for some $v\in $ $L^{\infty }\left( 0,\infty ;H^{2}\left(
\mathbb{R}
^{n}\right) \right) \cap W^{1,\infty }\left( 0,\infty ;L^{2}\left(
\mathbb{R}
^{n}\right) \right) $ and $q\in W^{1,\infty }\left( 0,\infty \right) $,
where $\left( v_{m}(t\right) ,v_{mt}\left( t\right)
)=S(t+t_{k_{m}}-T)\varphi _{k_{m}}$.

Now, taking into account (1.4) in (1.9), we find%
\begin{equation}
\int\limits_{0}^{\infty }\left\Vert v_{mt}(t)\right\Vert _{L^{2}\left(
\mathbb{R}
^{n}\backslash B\left( 0,r_{0}\right) \right)
}^{2}dt+\int\limits_{0}^{\infty }\left\Vert \nabla v_{mt}(t)\right\Vert
_{L^{2}\left(
\mathbb{R}
^{n}\backslash B\left( 0,r_{0}\right) \right) }^{2}dt\leq c_{1}\text{.\ }
\tag{2.15}
\end{equation}%
\textbf{\ }By (1.1), we have%
\begin{equation*}
v_{mtt}(t,x)-{div}\left( \beta \left( x\right) \nabla v_{mt}(t,x)\right)
+\gamma \Delta ^{2}v_{m}(t,x)+\alpha (x)v_{mt}(t,x)+\lambda v_{m}(t,x)
\end{equation*}%
\begin{equation}
=f(\left\Vert \nabla v_{m}\left( t\right) \right\Vert _{L^{2}\left(
\mathbb{R}
^{n}\right) }))\Delta v_{m}(t,x)-g\left( v_{m}(t,x)\right) +h\left( x\right)
.\newline
\tag{2.16}
\end{equation}%
Let $\eta \in C^{\infty }\left(
\mathbb{R}
^{n}\right) $, $0\leq \eta \left( x\right) \leq 1$, $\eta \left( x\right)
=\left\{
\begin{array}{c}
0,\text{ }\left\vert x\right\vert \leq 1\text{ } \\
1,\text{ }\left\vert x\right\vert \geq 2%
\end{array}%
\right. $ and $\eta _{r}\left( x\right) =\eta \left( \frac{x}{r}\right) $.
Multiplying (2.16) with $\eta _{r}^{2}v_{m}$ and integrating the obtained
equality over $\left( 0,T\right) \times
\mathbb{R}
^{n}$, we get%
\begin{equation*}
\int\limits_{0}^{T}\left( \gamma \left\Vert \eta _{r}\Delta
v_{m}(t)\right\Vert _{L^{2}\left(
\mathbb{R}
^{n}\right) }^{2}+\lambda \left\Vert \eta _{r}v_{m}(t)\right\Vert
_{L^{2}\left(
\mathbb{R}
^{n}\right) }^{2}\right) dt
\end{equation*}%
\begin{equation*}
=-\frac{1}{2}\int\limits_{%
\mathbb{R}
^{n}}\eta _{r}^{2}\left( x\right) \beta \left( x\right) \left\vert \nabla
v_{m}(T,x)\right\vert ^{2}dx+\frac{1}{2}\int\limits_{%
\mathbb{R}
^{n}}\eta _{r}^{2}\left( x\right) \beta \left( x\right) \left\vert \nabla
v_{m}(0,x)\right\vert ^{2}dx
\end{equation*}%
\begin{equation*}
-\frac{2}{r}\sum_{i=1}^{n}\int\limits_{0}^{T}\int\limits_{%
\mathbb{R}
^{n}}\beta \left( x\right) v_{mtx_{i}}(t,x)\eta _{r}\eta _{x_{i}}\left(
\frac{x}{r}\right) v_{m}(t,x)dxdt
\end{equation*}%
\begin{equation*}
+\int\limits_{0}^{T}\left\Vert \eta _{r}v_{mt}\left( t\right) \right\Vert
_{L^{2}\left(
\mathbb{R}
^{n}\right) }^{2}dt-\int\limits_{%
\mathbb{R}
^{n}}\eta _{r}^{2}\left( x\right) v_{mt}\left( T,x\right)
v_{m}(T,x)dx+\int\limits_{%
\mathbb{R}
^{n}}\eta _{r}^{2}\left( x\right) v_{mt}\left( 0,x\right) v_{m}(0,x)dx
\end{equation*}%
\begin{equation*}
-\frac{4\gamma }{r}\sum_{i=1}^{n}\int\limits_{0}^{T}\int\limits_{%
\mathbb{R}
^{n}}\eta _{r}\left( x\right) \eta _{x_{i}}\left( \frac{x}{r}\right) \Delta
v_{m}(t,x)v_{mx_{i}}(t,x)dxdt-\gamma \int\limits_{0}^{T}\int\limits_{%
\mathbb{R}
^{n}}\Delta \left( \eta _{r}^{2}\left( x\right) \right) \Delta
v_{m}(t,x)v_{m}(t,x)dxdt
\end{equation*}%
\begin{equation*}
-\frac{1}{2}\int\limits_{%
\mathbb{R}
^{n}}\eta _{r}^{2}\left( x\right) \alpha \left( x\right) \left\vert
v_{m}(T,x)\right\vert ^{2}dx+\frac{1}{2}\int\limits_{%
\mathbb{R}
^{n}}\eta _{r}^{2}\left( x\right) \alpha \left( x\right) \left\vert
v_{m}(0,x)\right\vert ^{2}dx
\end{equation*}%
\begin{equation*}
-\int\limits_{0}^{T}f(\left\Vert \nabla v_{m}\left( t\right) \right\Vert
_{L^{2}\left(
\mathbb{R}
^{n}\right) }))\int\limits_{%
\mathbb{R}
^{n}}\eta _{r}^{2}\left( x\right) \left\vert \nabla v_{m}(t,x)\right\vert
^{2}dxdt
\end{equation*}%
\begin{equation*}
-\frac{2}{r}\sum_{i=1}^{n}\int\limits_{0}^{T}f(\left\Vert \nabla v_{m}\left(
t\right) \right\Vert _{L^{2}\left(
\mathbb{R}
^{n}\right) }))\int\limits_{%
\mathbb{R}
^{n}}\eta _{r}\eta _{x_{i}}\left( \frac{x}{r}\right) v_{mx_{i}}(t,x)v_{m}dxdt
\end{equation*}%
\begin{equation*}
-\int\limits_{0}^{T}\int\limits_{%
\mathbb{R}
^{n}}g\left( v_{m}\left( t,x\right) \right) \eta _{r}^{2}\left( x\right)
v_{m}\left( t,x\right) dxdt
\end{equation*}%
\begin{equation}
+\int\limits_{0}^{T}\int\limits_{%
\mathbb{R}
^{n}}h\left( x\right) \eta _{r}^{2}\left( x\right) v_{m}\left( t,x\right)
dxdt.  \tag{2.17}
\end{equation}%
Taking into account (1.3), (1.6), (1.8), (1.9), (2.13) and (2.15) in (2.17),
we obtain%
\begin{equation*}
\lim \sup_{m\rightarrow \infty }\int\limits_{0}^{T}\left( \gamma \left\Vert
\Delta v_{m}(t)\right\Vert _{L^{2}\left(
\mathbb{R}
^{n}\backslash B\left( 0,2r\right) \right) }^{2}+\lambda \left\Vert
v_{m}(t)\right\Vert _{L^{2}\left(
\mathbb{R}
^{n}\backslash B\left( 0,2r\right) \right) }^{2}\right) dt
\end{equation*}%
\begin{equation}
\leq c_{2}\left( 1+\frac{\sqrt{T}}{r}+\frac{T}{r}+T\left\Vert h\right\Vert
_{L^{2}\left(
\mathbb{R}
^{n}\backslash B\left( 0,r\right) \right) }\right) \text{, \ }\forall r\geq
r_{0}\text{.}  \tag{2.18}
\end{equation}%
Now, by (1.1), we have%
\begin{equation*}
v_{mtt}(t,x)-v_{ltt}(t,x)-{div}\left( \beta \left( x\right) \cdot \nabla
\left( v_{mt}(t,x)-v_{lt}(t,x)\right) \right) +\gamma \Delta ^{2}\left(
v_{m}(t,x)-v_{l}(t,x)\right)
\end{equation*}%
\begin{equation*}
+\alpha (x)\left( v_{mt}(t,x)-v_{lt}(t,x)\right) +\lambda \left(
v_{m}(t,x)-v_{l}(t,x)\right)
\end{equation*}%
\begin{equation}
=f(\left\Vert \nabla v_{m}\left( t\right) \right\Vert _{L^{2}\left(
\mathbb{R}
^{n}\right) }))\Delta v_{m}(t,x)-f(\left\Vert \nabla v_{l}\left( t\right)
\right\Vert _{L^{2}\left(
\mathbb{R}
^{n}\right) }))\Delta v_{l}(t,x)-g\left( v_{m}\right) +g\left( v_{l}\right) .%
\newline
\tag{2.19}
\end{equation}%
\ Multiplying (2.19) by $\sum\nolimits_{i=1}^{n}x_{i}\left( 1-\eta
_{4r}\right) \left( v_{m}-v_{l}\right) _{x_{i}}+\frac{1}{2}\left( n-1\right)
\left( 1-\eta _{4r}\right) \left( v_{m}-v_{l}\right) $, integrating the
obtained equality over $\left( 0,T\right) \times
\mathbb{R}
^{n}$ and taking into account (2.13), we obtain%
\begin{equation*}
\frac{3\gamma }{2}\int\limits_{0}^{T}\left\Vert \Delta \left( v_{m}\left(
t\right) -v_{l}\left( t\right) \right) \right\Vert _{L^{2}\left( B\left(
0,4r\right) \right) }^{2}dt+\frac{1}{2}\int\limits_{0}^{T}\left\Vert
v_{mt}\left( t\right) -v_{lt}\left( t\right) \right\Vert _{L^{2}\left(
B\left( 0,4r\right) \right) }^{2}dt
\end{equation*}%
\begin{equation*}
\leq c_{3}(1+T+rT)\left\Vert v_{m}-v_{l}\right\Vert _{C[0,T];H^{1}(B\left(
0,8r\right) )}
\end{equation*}%
\begin{equation*}
+c_{3}\left( \sqrt{T}+r\sqrt{T}\right) \left\Vert \sqrt{\beta }\left( \nabla
v_{mt}-\nabla v_{lt}\right) \right\Vert _{L^{2}(\left( 0,T\right) \times
B\left( 0,8r\right) )}
\end{equation*}%
\begin{equation}
+c_{3}\left( \left\Vert v_{mt}-v_{lt}\right\Vert _{L^{2}\left(
0,T;L^{2}\left( B\left( 0,8r\right) \backslash B\left( 0,4r\right) \right)
\right) }^{2}+\left\Vert v_{m}-v_{l}\right\Vert _{L^{2}\left(
0,T;H^{2}\left( B\left( 0,8r\right) \backslash B\left( 0,4r\right) \right)
\right) }^{2}\right) .  \tag{2.20}
\end{equation}%
Thus, considering (2.14), (2.15), (2.18) and passing to the limit in (2.20)
, we get
\begin{equation*}
\underset{m\rightarrow \infty }{\lim \sup }\text{ }\underset{l\rightarrow
\infty }{\lim \sup }\int\limits_{0}^{T}\left[ \left\Vert \Delta \left(
v_{m}\left( t\right) -v_{l}\left( t\right) \right) \right\Vert _{L^{2}\left(
B\left( 0,4r\right) \right) }^{2}+\left\Vert v_{mt}\left( t\right)
-v_{lt}\left( t\right) \right\Vert _{L^{2}\left( B\left( 0,4r\right) \right)
}^{2}\right] dt
\end{equation*}%
\begin{equation}
\leq c_{4}\left( 1+\frac{\sqrt{T}}{r}+\frac{T}{r}+r\sqrt{T}+T\left\Vert
h\right\Vert _{L^{2}\left(
\mathbb{R}
^{n}\backslash B\left( 0,2r\right) \right) }\right) \text{, \ }\forall r\geq
r_{0}\text{.}  \tag{2.21}
\end{equation}%
Now, multiplying (2.19) by $\left( 1-\eta _{2r}\right) ^{4}t\left[ 2\left(
v_{mt}-v_{lt}\right) +\alpha _{0}\eta _{r}^{4}\left( v_{m}-v_{l}\right) %
\right] $ and integrating the obtained equality over $\left( 0,T\right)
\times
\mathbb{R}
^{n},$ we obtain%
\begin{equation*}
\gamma T\left\Vert \Delta \left( v_{m}\left( T\right) -v_{l}\left( T\right)
\right) \right\Vert _{L^{2}\left( B\left( 0,2r\right) \right)
}^{2}+T\left\Vert v_{mt}\left( T\right) -v_{lt}\left( T\right) \right\Vert
_{L^{2}\left( B\left( 0,2r\right) \right) }^{2}+
\end{equation*}%
\begin{equation*}
+T\lambda \left\Vert v_{m}\left( T\right) -v_{l}\left( T\right) \right\Vert
_{L^{2}\left( B\left( 0,2r\right) \right) }^{2}\leq
\int\limits_{0}^{T}\left\Vert v_{mt}\left( t\right) -v_{lt}\left( t\right)
\right\Vert _{L^{2}\left( B\left( 0,4r\right) \right) }^{2}dt
\end{equation*}%
\begin{equation*}
+\gamma \int\limits_{0}^{T}\left\Vert \Delta \left( v_{m}(t)-v_{l}(t)\right)
\right\Vert _{L^{2}\left( B\left( 0,4r\right) \right) }^{2}dt+\lambda
\int\limits_{0}^{T}\left\Vert v_{m}(t)-v_{l}(t)\right\Vert _{L^{2}\left(
B\left( 0,4r\right) \right) }^{2}dt
\end{equation*}%
\begin{equation*}
+2\left\vert \int\limits_{0}^{T}\int\limits_{B\left( 0,4r\right) }t\left(
f(\left\Vert \nabla v_{m}\left( t\right) \right\Vert _{L^{2}\left(
\mathbb{R}
^{n}\right) })\Delta v_{m}\left( t,x\right) -f(\left\Vert \nabla v_{l}\left(
t\right) \right\Vert _{L^{2}\left(
\mathbb{R}
^{n}\right) })\Delta v_{l}\left( t,x\right) )\right) \right.
\end{equation*}%
\begin{equation*}
\left. \times \left( 1-\eta _{2r}\right) ^{4}\left( v_{mt}\left( t,x\right)
-v_{lt}\left( t,x\right) \right) dxdt\right\vert
\end{equation*}%
\begin{equation*}
+2\left\vert \int\limits_{0}^{T}\int\limits_{B\left( 0,4r\right) }t\left(
g\left( v_{m}\left( t,x\right) \right) -g\left( v_{l}\left( t,x\right)
\right) \right) \left( 1-\eta _{2r}\right) ^{4}\left( v_{mt}\left(
t,x\right) -v_{lt}\left( t,x\right) \right) dxdt\right\vert
\end{equation*}%
\begin{equation*}
+\alpha _{0}\left\vert \int\limits_{0}^{T}\int\limits_{B\left( 0,4r\right)
}t\left( f(\left\Vert \nabla v_{m}\left( t\right) \right\Vert _{L^{2}\left(
\mathbb{R}
^{n}\right) })\Delta v_{m}\left( t,x\right) -f(\left\Vert \nabla v_{l}\left(
t\right) \right\Vert _{L^{2}\left(
\mathbb{R}
^{n}\right) })\Delta v_{l}\left( t,x\right) )\right) \right.
\end{equation*}%
\begin{equation*}
\left. \times \left( 1-\eta _{2r}\right) ^{4}\eta _{r}^{4}(x)\left(
v_{m}\left( t,x\right) -v_{l}\left( t,x\right) \right) dxdt\right\vert
\end{equation*}%
\begin{equation*}
+\alpha _{0}\left\vert \int\limits_{0}^{T}\int\limits_{B\left( 0,4r\right)
}t\left( g\left( v_{m}\left( t,x\right) \right) -g\left( v_{l}\left(
t,x\right) \right) \right) \left( 1-\eta _{2r}\right) ^{4}\eta
_{r}^{4}(x)\left( v_{m}\left( t,x\right) -v_{l}\left( t,x\right) \right)
dxdt\right\vert
\end{equation*}%
\begin{equation*}
+\frac{c_{5}T}{r}\int\limits_{0}^{T}\int\limits_{B\left( 0,4r\right)
\backslash B\left( 0,r\right) }\left\vert \nabla \left( v_{mt}\left(
t,x\right) -v_{lt}\left( t,x\right) \right) \right\vert ^{2}dxdt
\end{equation*}%
\begin{equation*}
+\frac{c_{5}T}{r}\int\limits_{0}^{T}\int\limits_{B\left( 0,4r\right)
\backslash B\left( 0,r\right) }\left\vert v_{mt}\left( t,x\right)
-v_{lt}\left( t,x\right) \right\vert ^{2}dxdt
\end{equation*}%
\begin{equation}
+c_{5}T\left\Vert v_{m}-v_{l}\right\Vert _{C([0,T];H^{1}\left( B\left(
0,4r\right) \right) )}^{2}\text{, \ \ }\forall r\geq r_{0}\text{, \ }\forall
T\geq 1\text{.}  \tag{2.22}
\end{equation}%
Then, taking into account (2.14), (2.15), (2.21), Lemma 2.1 and Lemma 2.2,
and passing to the limit in (2.22), we find%
\begin{equation*}
\underset{m\rightarrow \infty }{\lim \sup }\text{ }\underset{l\rightarrow
\infty }{\lim \sup }\left( \left\Vert v_{m}\left( T\right) -v_{l}\left(
T\right) \right\Vert _{H^{2}\left( B\left( 0,2r\right) \right)
}^{2}+\left\Vert v_{mt}\left( T\right) -v_{lt}\left( T\right) \right\Vert
_{L^{2}\left( B\left( 0,2r\right) \right) }^{2}\right)
\end{equation*}%
\begin{equation}
\leq c_{6}\left( \frac{1}{T}+\frac{1}{\sqrt{T}r}+\frac{1}{r}+\frac{r}{\sqrt{T%
}}+\left\Vert h\right\Vert _{L^{2}\left(
\mathbb{R}
^{n}\backslash B\left( 0,2r\right) \right) }\right) \text{, \ }\forall r\geq
r_{0}\text{, \ }\forall T\geq 1\text{.}  \tag{2.23}
\end{equation}%
Thus, by the definition of $v_{m}$, the inequality (2.23) yields%
\begin{equation*}
\underset{m\rightarrow \infty }{\lim \sup }\text{ }\underset{l\rightarrow
\infty }{\lim \sup }\left\Vert S(t_{k_{m}})\varphi
_{k_{m}}-S(t_{k_{l}})\varphi _{k_{l}}\right\Vert _{H^{2}\left( B\left(
0,r\right) \right) \times L^{2}\left( B\left( 0,r\right) \right) }^{2}
\end{equation*}%
\begin{equation}
\leq c_{7}\left( \frac{1}{T}+\frac{1}{\sqrt{T}r}+\frac{1}{r}+\frac{r}{\sqrt{T%
}}+\left\Vert h\right\Vert _{L^{2}\left(
\mathbb{R}
^{n}\backslash B\left( 0,r\right) \right) }\right) \text{, \ }\forall r\geq
2r_{0}\text{, \ }\forall T\geq 1\text{.}  \tag{2.24}
\end{equation}%
Passing to the limit as $T\rightarrow \infty $ in (2.24), we obtain%
\begin{equation*}
\underset{l\rightarrow \infty }{\lim \inf }\text{ }\underset{m\rightarrow
\infty }{\lim \inf }\left\Vert S(t_{k})\varphi _{k}-S(t_{m})\varphi
_{m}\right\Vert _{H^{2}\left( B\left( 0,r\right) \right) \times L^{2}\left(
B\left( 0,r\right) \right) }^{2}
\end{equation*}%
\begin{equation*}
\leq c_{7}\left( \frac{1}{r}+\left\Vert h\right\Vert _{L^{2}\left(
\mathbb{R}
^{n}\backslash B\left( 0,r\right) \right) }\right) \text{, \ }\forall r\geq
2r_{0},
\end{equation*}%
which gives%
\begin{equation*}
\underset{l\rightarrow \infty }{\lim \inf }\text{ }\underset{m\rightarrow
\infty }{\lim \inf }\left\Vert S(t_{k})\varphi _{k}-S(t_{m})\varphi
_{m}\right\Vert _{H^{2}\left( B\left( 0,r\right) \right) \times L^{2}\left(
B\left( 0,r\right) \right) }^{2}
\end{equation*}%
\begin{equation}
\leq c_{7}\left( \frac{1}{\widetilde{r}}+\left\Vert h\right\Vert
_{L^{2}\left(
\mathbb{R}
^{n}\backslash B\left( 0,\widetilde{r}\right) \right) }\right) \text{, \ \ }%
\forall \widetilde{r}\geq r\geq 2r_{0}.  \tag{2.25}
\end{equation}%
Consequently, by passing to the limit as $\widetilde{r}\rightarrow \infty $
in (2.25), we deduce%
\begin{equation}
\underset{l\rightarrow \infty }{\lim \inf }\text{ }\underset{m\rightarrow
\infty }{\lim \inf }\left\Vert S(t_{k})\varphi _{k}-S(t_{m})\varphi
_{m}\right\Vert _{H^{2}\left( B\left( 0,r\right) \right) \times L^{2}\left(
B\left( 0,r\right) \right) }=0\text{, \ \ }\forall r>0\text{.}  \tag{2.26}
\end{equation}%
Let $r_{i}\nearrow \infty $ as $i\rightarrow \infty $. Taking $r=r_{i}$ in
(2.26) and using the arguments at the end of the proof of \cite[Lemma 3.4]%
{17}, we can say that there exist subsequences $\left\{ k_{m}^{(i)}\right\} $
\ such that
\begin{equation*}
\left\{ k_{m}^{(1)}\right\} \supset \left\{ k_{m}^{(2)}\right\} \supset
...\supset \left\{ k_{m}^{(i)}\right\} \supset ..
\end{equation*}%
and
\begin{equation*}
\left\{ S(t_{k_{m}^{(i)}})\varphi _{k_{m}^{(i)}}\right\} \text{ converges in
}H^{2}\left( B\left( 0,r_{i}\right) \right) \times L^{2}\left( B\left(
0,r_{i}\right) \right) \text{.}
\end{equation*}%
Thus, the diagonal subsequence $\left\{ S(t_{k_{m}^{(m)}})\varphi
_{k_{m}^{(m)}}\right\} $ converges in $H^{2}\left( B\left( 0,r\right)
\right) \times L^{2}\left( B\left( 0,r\right) \right) $, for every $r>0$.
\end{proof}

To establish the tail estimate, we need the following lemma.

\begin{lemma}
Let the conditions (1.3)-(1.6) hold and $B$ be a bounded subset of $%
H^{2}\left(
\mathbb{R}
^{n}\right) .$ Then for every $\varepsilon >0$ there exist a constant $%
\delta \equiv \delta \left( \varepsilon \right) >0$ and functions $\psi
_{\varepsilon }\in L^{\infty }\left(
\mathbb{R}
^{n}\right) $, $\varphi _{\varepsilon }\in C^{\infty }\left(
\mathbb{R}
^{n}\right) $, such that $0\leq \psi _{\varepsilon }\leq \min \left\{
1,\delta ^{-1}\beta \right\} $ a.e. in $%
\mathbb{R}
^{n}$, $0\leq \varphi _{\varepsilon }\leq 1$ in $%
\mathbb{R}
^{n}$, supp$\left( \varphi _{\varepsilon }\right) \subset \{x\in
\mathbb{R}
^{n}:\alpha \left( x\right) \geq \delta $ a.e. in $%
\mathbb{R}
^{n}\}$ and%
\begin{equation}
\left\vert f\left( \left\Vert \nabla u\right\Vert _{L^{2}\left(
\mathbb{R}
^{n}\right) }\right) -f_{\delta }\left( \sqrt{\left\Vert \sqrt{\psi
_{\varepsilon }}\nabla u\right\Vert _{L^{2}\left(
\mathbb{R}
^{n}\right) }^{2}+\left\Vert \sqrt{\varphi _{\varepsilon }}\nabla
u\right\Vert _{L^{2}\left(
\mathbb{R}
^{n}\right) }^{2}}\right) \right\vert <\varepsilon ,  \tag{2.27}
\end{equation}%
for every $u\in B,$ where $f_{\delta }$ is the function defined in the proof
of Lemma 2.1.
\end{lemma}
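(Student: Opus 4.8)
The plan is to reduce (2.27) to an estimate for the Lebesgue measure of a single ``bad'' set, and then to produce $\psi_{\varepsilon}$ and $\varphi_{\varepsilon}$ as a partition subordinate to the two damping regions $\{\beta\ge\delta\}$ and $\{\alpha\ge\delta\}$, using (1.4) to dispose of the exterior of $B(0,r_{0})$ and (1.5) to control its interior.

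First I would set up the reductions. Since $B$ is bounded in $H^{2}(\mathbb{R}^{n})$, the family $\{\nabla u:u\in B\}$ is bounded in $H^{1}(\mathbb{R}^{n})^{n}$, so $M:=\sup_{u\in B}\Vert\nabla u\Vert_{L^{2}(\mathbb{R}^{n})}<\infty$; moreover, by the compactness of $H^{1}(B(0,r_{0}))\hookrightarrow L^{2}(B(0,r_{0}))$, the family $\{|\nabla u|^{2}:u\in B\}$ has uniformly absolutely continuous integrals on $B(0,r_{0})$, i.e. for every $\sigma>0$ there is $\kappa(\sigma)>0$ with $\sup_{u\in B}\int_{E}|\nabla u|^{2}dx<\sigma$ whenever $E\subset B(0,r_{0})$ and $|E|<\kappa(\sigma)$. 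Supposing the functions constructed below satisfy $0\le\psi_{\varepsilon}+\varphi_{\varepsilon}\le1$, set $a_{u}=\Vert\nabla u\Vert_{L^{2}(\mathbb{R}^{n})}$ and $b_{u}=\big(\int_{\mathbb{R}^{n}}(\psi_{\varepsilon}+\varphi_{\varepsilon})|\nabla u|^{2}dx\big)^{1/2}$, so $0\le b_{u}\le a_{u}\le M$; then, using the elementary inequality $a_{u}-b_{u}\le\sqrt{a_{u}^{2}-b_{u}^{2}}$, the uniform continuity of $f$ on $[0,M]$ with modulus $\omega$, and $|f(s)-f_{\delta}(s)|\le\rho(\delta):=\max_{0\le s_{1},s_{2}\le\delta}|f(s_{1})-f(s_{2})|$ for all $s\ge0$, one gets
\begin{equation*}
\left\vert f(a_{u})-f_{\delta}(b_{u})\right\vert \le \omega\!\left(\Big(\int_{\mathbb{R}^{n}}(1-\psi_{\varepsilon}-\varphi_{\varepsilon})|\nabla u|^{2}dx\Big)^{1/2}\right)+\rho(\delta).
\end{equation*}
Hence it suffices to fix $\tau>0$ with $\omega(\sqrt{\tau})<\varepsilon/2$ and then to choose $\delta$ so small that $\rho(\delta)<\varepsilon/2$ and $\int_{\mathbb{R}^{n}}(1-\psi_{\varepsilon}-\varphi_{\varepsilon})|\nabla u|^{2}dx<\tau$ for every $u\in B$.

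For the construction, take $\delta\in(0,\min\{1,\alpha_{0},\beta_{0}\})$. By (1.4), $\beta\ge\beta_{0}>\delta$ and $\alpha\ge\alpha_{0}>\delta$ a.e. on $\{|x|\ge r_{0}\}$, so $\min\{1,\delta^{-1}\beta\}=1$ there and, up to a null set, $\{|x|\ge r_{0}\}\subset\{\alpha\ge\delta\}$ and $\{\beta<\delta\}\subset B(0,r_{0})$. I would choose $\varphi_{\varepsilon}\in C_{0}^{\infty}(B(0,r_{0}))$ with $0\le\varphi_{\varepsilon}\le1$, $\mathrm{supp}\,\varphi_{\varepsilon}\subset\{x\in B(0,r_{0}):\alpha(x)\ge\delta\ \text{a.e.}\}$, and $\varphi_{\varepsilon}\equiv1$ on a set whose complement inside $\{|x|<r_{0},\ \alpha\ge\delta\}$ has measure $<\kappa(\tau)/2$ — constructed by approximating $\{|x|<r_{0},\ \alpha\ge\delta\}$ from inside by compact sets and inserting a smooth cut-off — and then set $\psi_{\varepsilon}:=\chi_{\{\beta\ge\delta\}}(1-\varphi_{\varepsilon})$. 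One checks at once that $\psi_{\varepsilon}\in L^{\infty}(\mathbb{R}^{n})$, $0\le\psi_{\varepsilon}\le\chi_{\{\beta\ge\delta\}}\le\min\{1,\delta^{-1}\beta\}$, $\psi_{\varepsilon}+\varphi_{\varepsilon}\le1$, and $1-\psi_{\varepsilon}-\varphi_{\varepsilon}=\chi_{\{\beta<\delta\}}(1-\varphi_{\varepsilon})$, whose support lies in $\{|x|<r_{0},\ \beta<\delta,\ \varphi_{\varepsilon}<1\}$. This last set is contained in $\{|x|<r_{0},\ \alpha<\delta,\ \beta<\delta\}\cup\{|x|<r_{0},\ \alpha\ge\delta,\ \varphi_{\varepsilon}<1\}$; by (1.5) the family $\{|x|<r_{0},\ \alpha<\delta,\ \beta<\delta\}$ decreases to the null set $\{|x|<r_{0},\ \alpha=\beta=0\}$ as $\delta\downarrow0$, so for $\delta$ small its measure is $<\kappa(\tau)/2$, while the measure of the second set is $<\kappa(\tau)/2$ by the choice of $\varphi_{\varepsilon}$. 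Therefore $\big|\mathrm{supp}(1-\psi_{\varepsilon}-\varphi_{\varepsilon})\big|<\kappa(\tau)$, and uniform absolute continuity gives $\int(1-\psi_{\varepsilon}-\varphi_{\varepsilon})|\nabla u|^{2}dx<\tau$ for all $u\in B$; shrinking $\delta$ further so that $\rho(\delta)<\varepsilon/2$ then yields (2.27).

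The step I expect to be the main obstacle is the construction of $\varphi_{\varepsilon}$: it has to be smooth and yet supported (up to a null set) in the merely measurable set $\{\alpha\ge\delta\}$, while covering all but a $\kappa(\tau)/2$-portion of $\{|x|<r_{0},\ \alpha\ge\delta\}$. Handling this cleanly requires inner regularity of the Lebesgue measure together with a careful choice of the smooth cut-off, and it is precisely at this point that hypothesis (1.5) is used — it guarantees that the region where neither the strong nor the weak damping ``sees'' $|\nabla u|^{2}$ is Lebesgue-negligible, so that the approximation of $f(\Vert\nabla u\Vert_{L^{2}})$ by the localized functional on the right of (2.27) is possible uniformly over $B$. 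Everything else is the elementary continuity/monotonicity bookkeeping for $f$ and $f_{\delta}$ recorded above.
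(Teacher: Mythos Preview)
Your strategy is correct and close to the paper's, but the two constructions are arranged differently, and the paper's version neatly avoids exactly the obstacle you flag.

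\textbf{Where the approaches differ.} You make $\varphi_{\varepsilon}\in C_0^{\infty}(B(0,r_0))$ supported in $\{\alpha\ge\delta\}$ and let $\psi_{\varepsilon}=\chi_{\{\beta\ge\delta\}}(1-\varphi_{\varepsilon})$ pick up the exterior $\{|x|\ge r_0\}$ (where $\beta\ge\beta_0$). The paper swaps the roles: it sets $\varphi_\delta=1-\eta_\delta$, equal to $1$ on the whole exterior, and takes $\psi_\delta=\dfrac{\beta}{\lambda_\delta+\beta}\,\chi_{A_0}$, supported on the bounded set $A_0=\{x\in B(0,r_0):\alpha(x)=0\}$, with $\lambda_\delta$ chosen (by dominated convergence, using $\beta>0$ a.e.\ on $A_0$ from (1.5)) so that $\int_{A_0}\lambda_\delta/(\lambda_\delta+\beta)<\delta$. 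Both routes reduce (2.27) to the same ingredients: $|a-b|\le\sqrt{|a^2-b^2|}$, continuity of $f$, the $f_\delta$ correction, and a small-measure bound for the set where $\psi+\varphi<1$. For the latter the paper uses the Sobolev--H\"older estimate $\int_E|\nabla u|^2\le c\,|E|^{n^\ast}\Vert u\Vert_{H^2}^2$ in place of your Rellich/Vitali uniform-integrability argument; either works.

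\textbf{On the obstacle you identify.} Your inner approximation of $\{\alpha\ge\delta\}\cap B(0,r_0)$ by compact sets, followed by a mollified cutoff, does not by itself keep $\mathrm{supp}\,\varphi_\varepsilon$ inside $\{\alpha\ge\delta\}$, since the mollification leaks into a neighborhood of the compact set. The paper sidesteps this entirely by going the other way: it takes $A_{k_\delta}=\{x\in B(0,r_0):\alpha(x)<1/k_\delta\}$ with $\mathrm{mes}(A_{k_\delta}\setminus A_0)$ small, finds an \emph{open} $O_\delta^{(1)}\supset A_{k_\delta}$ with small excess, then a smooth bump $\eta_\delta$ with $\eta_\delta\equiv1$ on $O_\delta^{(1)}$ and $\mathrm{supp}\,\eta_\delta\subset O_\delta^{(2)}\Supset O_\delta^{(1)}$ (again with small excess), and puts $\varphi_\delta=1-\eta_\delta$. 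Then automatically $\mathrm{supp}\,\varphi_\delta\subset\mathbb{R}^n\setminus O_\delta^{(1)}\subset\{\alpha\ge1/k_\delta\}\cup\{|x|\ge r_0\}$, and (1.4) gives $\alpha\ge\alpha_0$ on the exterior; so $\alpha\ge\min(1/k_\delta,\alpha_0)$ on $\mathrm{supp}\,\varphi_\delta$. Outer approximation of the \emph{bad} set, rather than inner approximation of the good one, is the clean way to secure the support condition; you may want to adopt it.
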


\begin{proof}
Let $A_{0}=\left\{ x\in B\left( 0,r_{0}\right) :\alpha \left( x\right)
=0\right\} $ and $A_{k}=\left\{ x\in B\left( 0,r_{0}\right) :0\leq \alpha
\left( x\right) <\frac{1}{k}\right\} $. It is easy to see that $%
A_{k+1}\subset A_{k}$, and $A_{0}=\underset{k>0}{\cap }A_{k}.$ Hence, $%
\underset{k\rightarrow \infty }{\lim }mes\left( A_{k}\right) =mes\left(
A_{0}\right) $. So, for $\delta >0$, there exists $k_{\delta }~$such that%
\begin{equation}
mes\left( A_{k_{\delta }}\backslash A_{0}\right) <\frac{\delta }{3}.
\tag{2.28}
\end{equation}%
Since $A_{k_{\delta }}$ is a measurable subset of $B\left( 0,r_{0}\right) ,$
there exists an open set $O_{\delta }^{\left( 1\right) }\subset B\left(
0,r_{0}\right) $ such that $A_{k_{\delta }}\subset O_{\delta }^{\left(
1\right) }$ and
\begin{equation}
mes\left( O_{\delta }^{\left( 1\right) }\backslash A_{k_{\delta }}\right) <%
\frac{\delta }{3}.  \tag{2.29}
\end{equation}%
Now, let $\eta _{\delta }\in C_{0}\left(
\mathbb{R}
^{n}\right) $ such that $0\leq \eta _{\delta }\leq 1,$ $\left. \eta _{\delta
}\right\vert _{O_{\delta }^{\left( 1\right) }}=1$ and supp$\left( \eta
_{\delta }\right) \subset $ $O_{\delta }^{\left( 2\right) },$where $%
O_{\delta }^{\left( 1\right) }\Subset $ $O_{\delta }^{\left( 2\right) }$ and
\begin{equation}
mes\left( O_{\delta }^{\left( 2\right) }\backslash O_{\delta }^{\left(
1\right) }\right) <\frac{\delta }{3}.  \tag{2.30}
\end{equation}%
Then setting $\varphi _{\delta }:=1-$ $\eta _{\delta }$, we have $\varphi
_{\delta }\in C\left(
\mathbb{R}
^{n}\right) $, $0\leq \varphi _{\delta }\leq 1,\left. \varphi _{\delta
}\right\vert _{%
\mathbb{R}
^{n}\backslash O_{\delta }^{\left( 2\right) }}=1$ and supp$(\varphi _{\delta
})\subset
\mathbb{R}
^{n}\backslash O_{\delta }^{\left( 1\right) }$.

By (2.28)-(2.30), we obtain%
\begin{equation*}
\left\vert \int\limits_{O_{\delta }^{\left( 2\right) }}\varphi _{\delta
}\left\vert \nabla u\left( x\right) \right\vert
^{2}dx-\int\limits_{O_{\delta }^{\left( 2\right) }\backslash
A_{0}}\left\vert \nabla u\left( x\right) \right\vert ^{2}dx\right\vert
\end{equation*}%
\begin{equation*}
=\left\vert \int\limits_{O_{\delta }^{\left( 2\right) }\backslash O_{\delta
}^{\left( 1\right) }}\varphi _{\delta }\left\vert \nabla u\left( x\right)
\right\vert ^{2}dx-\int\limits_{O_{\delta }^{\left( 2\right) }\backslash
A_{0}}\left\vert \nabla u\left( x\right) \right\vert ^{2}dx\right\vert
\end{equation*}%
\begin{equation*}
\leq 2\int\limits_{O_{\delta }^{\left( 2\right) }\backslash A_{0}}\left\vert
\nabla u\left( x\right) \right\vert ^{2}dx\leq 2c\left\Vert u\right\Vert
_{H^{2}\left(
\mathbb{R}
^{n}\right) }^{2}\left( mes\left( O_{\delta }^{\left( 2\right) }\backslash
A_{0}\right) \right) ^{n^{\ast }}
\end{equation*}%
\begin{equation}
<2c\delta ^{n^{\ast }}\left\Vert u\right\Vert _{H^{2}\left(
\mathbb{R}
^{n}\right) }^{2},  \tag{2.31}
\end{equation}%
for every $u\in H^{2}\left(
\mathbb{R}
^{n}\right) $, where $n^{\ast }=\left\{
\begin{array}{c}
1,\text{ \ \ \ \ \ \ \ \ \ \ \ \ \ \ \ \ }n=1, \\
q,\text{ \ }0<q<1,\text{ \ }n=2, \\
\frac{2}{n},\text{ \ \ \ \ \ \ \ \ \ \ \ \ \ \ \ \ }n\geq 3%
\end{array}%
\right. $ and $c>0$.

Now, by (1.5), it follows that%
\begin{equation*}
\beta >0\text{ \ a.e. in }A_{0}.
\end{equation*}%
Hence, by Lebesgue dominated convergence theorem, there exists $\lambda
_{\delta }>0$ such that%
\begin{equation*}
\int\limits_{A_{0}}\frac{\lambda _{\delta }}{\lambda _{\delta }+\beta \left(
x\right) }dx<\delta ,
\end{equation*}%
which yields
\begin{equation}
\left\vert \int\limits_{A_{0}}\left\vert \nabla u\left( x\right) \right\vert
^{2}dx-\int\limits_{A_{0}}\frac{\beta \left( x\right) }{\lambda _{\delta
}+\beta \left( x\right) }\left\vert \nabla u\left( x\right) \right\vert
^{2}dx\right\vert <c\delta ^{n^{\ast }}\left\Vert u\right\Vert _{H^{2}\left(
\mathbb{R}
^{n}\right) }^{2}.  \tag{2.32}
\end{equation}%
Thus, denoting $\psi _{\delta }=\left\{
\begin{array}{c}
\frac{\beta \left( x\right) }{\lambda _{\delta }+\beta \left( x\right) }%
\text{, }x\in A_{0}, \\
0\text{, }x\in
\mathbb{R}
^{n}\backslash A_{0},%
\end{array}%
\right. $ by (2.31) and (2.32), we get%
\begin{equation*}
\left\vert \left\Vert \nabla u\right\Vert _{L^{2}\left(
\mathbb{R}
^{n}\right) }^{2}-\left\Vert \sqrt{\psi _{\delta }}\nabla u\right\Vert
_{L^{2}\left(
\mathbb{R}
^{n}\right) }^{2}-\left\Vert \sqrt{\varphi _{\delta }}\nabla u\right\Vert
_{L^{2}\left(
\mathbb{R}
^{n}\right) }^{2}\right\vert
\end{equation*}%
\begin{equation*}
<3c\delta ^{n^{\ast }}\left\Vert u\right\Vert _{H^{2}\left(
\mathbb{R}
^{n}\right) }^{2},
\end{equation*}%
and consequently%
\begin{equation*}
\left\vert \left\Vert \nabla u\right\Vert _{L^{2}\left(
\mathbb{R}
^{n}\right) }-\sqrt{\left\Vert \sqrt{\psi _{\delta }}\nabla u\right\Vert
_{L^{2}\left(
\mathbb{R}
^{n}\right) }^{2}+\left\Vert \sqrt{\varphi _{\delta }}\nabla u\right\Vert
_{L^{2}\left(
\mathbb{R}
^{n}\right) }^{2}}\right\vert
\end{equation*}%
\begin{equation*}
\leq \sqrt{\left\vert \left\Vert \nabla u\right\Vert _{L^{2}\left(
\mathbb{R}
^{n}\right) }^{2}-\left\Vert \sqrt{\psi _{\delta }}\nabla u\right\Vert
_{L^{2}\left(
\mathbb{R}
^{n}\right) }^{2}-\left\Vert \sqrt{\varphi _{\delta }}\nabla u\right\Vert
_{L^{2}\left(
\mathbb{R}
^{n}\right) }^{2}\right\vert }
\end{equation*}%
\begin{equation*}
<\sqrt{3c}\delta ^{\frac{1}{2}n^{\ast }}\left\Vert u\right\Vert
_{H^{2}\left(
\mathbb{R}
^{n}\right) }.
\end{equation*}%
The last inequality, together with the differentiability of the function $f$%
, yields (2.27).
\end{proof}

Now, let us proof the following tail estimate.

\begin{theorem}
\bigskip Assume that the conditions (1.3)-(1.8) hold and $\mathcal{B}$ is a
bounded subset of$\ H^{2}\left(
\mathbb{R}
^{n}\right) \times L^{2}\left(
\mathbb{R}
^{n}\right) $. Then for any $\varepsilon >0$ there exist $T\equiv T\left(
\mathcal{B},\varepsilon \right) $ and $R\equiv R\left( \mathcal{B}%
,\varepsilon \right) $ such that%
\begin{equation*}
\left\Vert S\left( t\right) \varphi \right\Vert _{H^{2}\left(
\mathbb{R}
^{n}\backslash B\left( 0,r\right) \right) \times L^{2}\left(
\mathbb{R}
^{n}\backslash B\left( 0,r\right) \right) }<\varepsilon \,\text{,}
\end{equation*}%
for every $t\geq T$, $r\geq R$ and $\varphi \in \mathcal{B}.$
\end{theorem}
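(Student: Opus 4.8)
The idea is to localize the energy identity (1.9) outside a large ball. Fix $\eta _{r}\in C^{\infty }(\mathbb{R}^{n})$ with $0\le \eta _{r}\le 1$, $\eta _{r}\equiv 0$ on $B(0,r)$, $\eta _{r}\equiv 1$ on $\mathbb{R}^{n}\setminus B(0,2r)$, $|\nabla \eta _{r}|\le C/r$, $|\Delta \eta _{r}|\le C/r^{2}$, and take $r\ge r_{0}$, so that $\operatorname{supp}\eta _{r}\subset \{\,|x|\ge r_{0}\,\}$, where $\alpha \ge \alpha _{0}$ and $\beta \ge \beta _{0}$ by (1.4): on $\operatorname{supp}\eta _{r}$ the equation is ``fully damped.'' From (1.9) together with (1.3), (1.6)--(1.8) one gets, uniformly for $\varphi \in \mathcal{B}$, both $\sup _{t\ge 0}\|S(t)\varphi \|_{H^{2}(\mathbb{R}^{n})\times L^{2}(\mathbb{R}^{n})}\le C$ and the global dissipation bound
\begin{equation*}
\int _{0}^{\infty }\!\!\int _{\mathbb{R}^{n}\setminus B(0,r_{0})}\bigl(\alpha (x)|u_{t}|^{2}+\beta (x)|\nabla u_{t}|^{2}\bigr)\,dx\,dt\le C ,
\end{equation*}
which is the only dissipative resource available on $\operatorname{supp}\eta _{r}$.

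Multiplying (1.1) by $\eta _{r}^{2}(2u_{t}+\kappa u)$, $\kappa >0$ fixed small, and integrating over $\mathbb{R}^{n}$, one obtains, with $\Psi _{r}(t)$ the resulting localized energy (built from $\int \eta _{r}^{2}(|u_{t}|^{2}+\gamma |\Delta u|^{2}+\lambda |u|^{2})$, $2\int \eta _{r}^{2}G(u)$, the cross term $\kappa \int \eta _{r}^{2}u_{t}u$ and $-2\int h\eta _{r}^{2}u$), a differential inequality
\begin{equation*}
\tfrac{d}{dt}\Psi _{r}(t)+c_{0}\Psi _{r}(t)\le \mathcal{R}_{r}(t),\qquad c_{0}>0 ,
\end{equation*}
where on $\operatorname{supp}\eta _{r}$ the bounds $\alpha \ge \alpha _{0}$, $\beta \ge \beta _{0}$ absorb the wrong‑sign term $-\kappa \int \eta _{r}^{2}|u_{t}|^{2}$ and the $\kappa $‑cross terms, while $g(s)s\ge 0$ and $f\ge 0$ give the favourable sign to the remaining $\kappa $‑terms. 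The remainder $\mathcal{R}_{r}$ consists of: (i) commutators created when $\nabla \eta _{r},\Delta \eta _{r}$ meet $\Delta ^{2}u$, $\beta \nabla u_{t}$, $f(\|\nabla u\|)\Delta u$ --- each carrying a factor $r^{-1}$ or $r^{-2}$ times a function bounded in $L^{2}(0,\infty )$ uniformly on $\mathcal{B}$; (ii) the forcing $\tfrac{\kappa }{2}\|h\|_{L^{2}(\mathbb{R}^{n}\setminus B(0,r))}^{2}$; (iii) the term $-f\bigl(\|\nabla u(t)\|_{L^{2}(\mathbb{R}^{n})}\bigr)\,\tfrac{d}{dt}\|\eta _{r}\nabla u(t)\|_{L^{2}(\mathbb{R}^{n})}^{2}$ produced by the nonlocal term against $2\eta _{r}^{2}u_{t}$. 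Integrating, the contributions of (i)--(ii) to $\Psi _{r}(t)$ are bounded, uniformly in $t$ and $\varphi \in \mathcal{B}$, by $C\bigl(r^{-1}+\|h\|_{L^{2}(\mathbb{R}^{n}\setminus B(0,r))}\bigr)$ (Cauchy--Schwarz in time, plus the dissipation bound), hence are small once $r$ is large.

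The crucial term is (iii), and this is where Lemma 2.3 enters. Write $w(t)=\|\eta _{r}\nabla u(t)\|_{L^{2}(\mathbb{R}^{n})}^{2}$ and, by Lemma 2.3 applied with a fixed small parameter $\varepsilon _{0}$ on the bounded set $\bigcup _{t\ge 0}S(t)\mathcal{B}$, decompose $f\bigl(\|\nabla u(t)\|_{L^{2}(\mathbb{R}^{n})}\bigr)=f_{\delta }\bigl(s_{0}(t)\bigr)+\rho (t)$, $|\rho (t)|<\varepsilon _{0}$, with $s_{0}(t)=\bigl(\|\sqrt{\psi _{\varepsilon _{0}}}\nabla u(t)\|_{L^{2}(\mathbb{R}^{n})}^{2}+\|\sqrt{\varphi _{\varepsilon _{0}}}\nabla u(t)\|_{L^{2}(\mathbb{R}^{n})}^{2}\bigr)^{1/2}$. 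The term $-\rho w'$ is $\le \varepsilon _{0}\bigl(\|\eta _{r}\nabla u_{t}\|^{2}+\|\eta _{r}\nabla u\|^{2}\bigr)$; reserving part of the dissipation $\int \eta _{r}^{2}\beta |\nabla u_{t}|^{2}$ (legitimate since $\beta \ge \beta _{0}$ on $\operatorname{supp}\eta _{r}$) absorbs the first piece, and choosing $\varepsilon _{0}$ small, depending only on the structural constants, absorbs the second into $c_{0}\Psi _{r}$, up to an $O(\varepsilon _{0}^{2}/r)$ error. The term $-f_{\delta }(s_{0})w'$ is treated in the time‑integrated identity by integration by parts: $-\int _{s}^{t}f_{\delta }(s_{0})w'=-f_{\delta }(s_{0}(t))w(t)+f_{\delta }(s_{0}(s))w(s)+\int _{s}^{t}w\,\tfrac{d}{d\tau }f_{\delta }(s_{0})$, the first term having the good sign, the second being part of the tail energy at time $s$, and the last controlled because the hypotheses $\psi _{\varepsilon _{0}}\le \delta ^{-1}\beta $ and $\operatorname{supp}\varphi _{\varepsilon _{0}}\subset \{\alpha \ge \delta \}$ of Lemma 2.3 give, after one integration by parts in the $\varphi _{\varepsilon _{0}}$‑integral,
\begin{equation*}
\Bigl|\tfrac{d}{dt}s_{0}^{2}(t)\Bigr|=\Bigl|2\!\int \psi _{\varepsilon _{0}}\nabla u\cdot \nabla u_{t}+2\!\int \varphi _{\varepsilon _{0}}\nabla u\cdot \nabla u_{t}\Bigr|\le C_{\delta }\bigl(\|\sqrt{\beta }\nabla u_{t}\|+\|\sqrt{\alpha }u_{t}\|\bigr),
\end{equation*}
whence, since $f_{\delta }'\equiv 0$ on $[0,\delta )$, $|\tfrac{d}{dt}f_{\delta }(s_{0})|\le C_{\delta }'\bigl(\|\sqrt{\beta }\nabla u_{t}\|+\|\sqrt{\alpha }u_{t}\|\bigr)$, a function whose $L^{2}(s,\infty )$‑norm tends to $0$ as $s\to \infty $ uniformly on $\mathcal{B}$ by the dissipation bound.

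Combining everything, one arrives at an estimate of the form $\|S(t)\varphi \|_{H^{2}(\mathbb{R}^{n}\setminus B(0,2r))\times L^{2}(\mathbb{R}^{n}\setminus B(0,2r))}^{2}\le Ce^{-c_{0}(t-s)}+C_{\delta }\Theta _{\mathcal{B}}(s)+C_{\delta }\bigl(r^{-1}+\|h\|_{L^{2}(\mathbb{R}^{n}\setminus B(0,r))}\bigr)$, valid for all $t\ge s\ge 0$, $r\ge r_{0}$, $\varphi \in \mathcal{B}$, where $\Theta _{\mathcal{B}}(s):=\sup _{\varphi \in \mathcal{B}}\int _{s}^{\infty }\!\int (\alpha |u_{t}|^{2}+\beta |\nabla u_{t}|^{2})\to 0$ as $s\to \infty $. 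The conclusion follows by choosing, in this order: $\varepsilon _{0}$ small (which fixes $\delta ,\psi _{\varepsilon _{0}},\varphi _{\varepsilon _{0}}$, hence $C_{\delta }$), then $R$ so large that $C_{\delta }(R^{-1}+\|h\|_{L^{2}(\mathbb{R}^{n}\setminus B(0,R))})$ is small, then $s$ so large that $C_{\delta }\Theta _{\mathcal{B}}(s)$ is small, and finally $T\ge s$ so large that $Ce^{-c_{0}(T-s)}$ is small. The main obstacle --- and the reason Lemma 2.3 is indispensable --- is the global, non‑localizable coefficient $f(\|\nabla u\|_{L^{2}(\mathbb{R}^{n})})$: because $f'(0)$ need not vanish and $\|\nabla u(t)\|_{L^{2}(\mathbb{R}^{n})}$ may approach $0$, one cannot control $\tfrac{d}{dt}f(\|\nabla u\|_{L^{2}(\mathbb{R}^{n})})$ by the localized damping, whereas the substitute $f_{\delta }(s_{0})$, concentrated on the damped region and with derivative vanishing near $0$, can be; the secondary point requiring care is the uniformity over $\varphi \in \mathcal{B}$ --- in particular the uniform decay $\Theta _{\mathcal{B}}(s)\to 0$, which rests on (1.9) and the uniform boundedness of trajectories --- together with the order in which $\varepsilon _{0},R,s,T$ are chosen.
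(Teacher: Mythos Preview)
Your overall strategy --- localize by $\eta_r$, combine the multipliers $\eta_r^2 u_t$ and $\eta_r^2 u$, and invoke Lemma~2.3 to replace $f(\|\nabla u\|)$ by $f_\delta(s_0)$ whose time derivative is controlled by the global damping --- coincides with the paper's. The estimate $|\tfrac{d}{dt}f_\delta(s_0)|\le C_\delta(\|\sqrt{\beta}\nabla u_t\|+\|\sqrt{\alpha}u_t\|)$ and the handling of the $\rho$-error are also correct.

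There is, however, a genuine gap in the final assembly. You claim that $\Theta_\mathcal{B}(s)=\sup_{\varphi\in\mathcal{B}}\int_s^\infty(\|\sqrt{\alpha}u_t\|^2+\|\sqrt{\beta}\nabla u_t\|^2)\,d\tau\to 0$ as $s\to\infty$, citing (1.9) and uniform boundedness. But (1.9) only gives the \emph{uniform bound} $\int_0^\infty(\cdots)\,d\tau\le C$ for each $\varphi\in\mathcal{B}$; it does not give uniform decay of the tails. Nothing prevents the dissipation for $\varphi_k\in\mathcal{B}$ from concentrating on intervals drifting to infinity with $k$, so that $\Theta_\mathcal{B}(s)$ stays bounded away from $0$. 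Consequently your choice of $s$ cannot be made independently of $\varphi$, and the conclusion is not uniform on $\mathcal{B}$.

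The paper avoids this by \emph{not} separating the $f_\delta$-contribution from the Gronwall step. It includes $f_\delta(s_0)\|\eta_r\nabla u\|^2$ in the Lyapunov functional $\Phi$ and absorbs the resulting term $C_\delta(\|\sqrt{\beta}\nabla u_t\|+\|\sqrt{\alpha}u_t\|)\|\eta_r\nabla u\|^2$ into the \emph{rate} rather than the right-hand side, obtaining $\tfrac{d}{dt}\Phi+H(t)\Phi\le c(\tfrac{1}{r}+\|h\|_{L^2(\mathbb{R}^n\setminus B(0,r))})$ with $H(t)=c_8-c_7(\|\sqrt{\beta}\nabla u_t\|+\|\sqrt{\alpha}u_t\|)$. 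Gronwall then gives the exponential factor $e^{-\int_\tau^t H}$, and Young's inequality together with the uniform bound $\int_0^\infty(\|\sqrt{\beta}\nabla u_t\|^2+\|\sqrt{\alpha}u_t\|^2)\le C$ yields $e^{-\int_\tau^t H}\le c_{10}e^{-\frac{c_8}{2}(t-\tau)}$ uniformly in $\varphi\in\mathcal{B}$. This uses only the total dissipation bound, not any tail decay, and closes the argument. Replacing your ``integrate-by-parts then $\Theta_\mathcal{B}(s)\to 0$'' step by this variable-rate Gronwall repairs your proof.
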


\begin{proof}
Let $\left( u_{0},u_{1}\right) \in \mathcal{B}$ and $\left( u\left( t\right)
,u_{t}\left( t\right) \right) =S\left( t\right) \left( u_{0},u_{1}\right) $.
Multiplying (1.1) with $\eta _{r}^{2}u_{t}$, integrating the obtained
equality over $%
\mathbb{R}
^{n}$ and taking into account (2.13)$,$ we get%
\begin{equation*}
\frac{1}{2}\frac{d}{dt}\left( \left\Vert \eta _{r}u_{t}\left( t\right)
\right\Vert _{L^{2}\left(
\mathbb{R}
^{n}\right) }^{2}+\gamma \left\Vert \eta _{r}\Delta u\left( t\right)
\right\Vert _{L^{2}\left(
\mathbb{R}
^{n}\right) }^{2}+\lambda \left\Vert \eta _{r}u\left( t\right) \right\Vert
_{L^{2}\left(
\mathbb{R}
^{n}\right) }^{2}\right)
\end{equation*}%
\begin{equation*}
+\frac{d}{dt}\left( \int\limits_{%
\mathbb{R}
^{n}}\eta _{r}^{2}\left( x\right) G\left( u\left( t,x\right) \right)
dx\right) +\left\Vert \sqrt{\beta }\eta _{r}\nabla u_{t}\left( t\right)
\right\Vert _{L^{2}\left(
\mathbb{R}
^{n}\right) }^{2}+\left\Vert \sqrt{\alpha }\eta _{r}u_{t}\left( t\right)
\right\Vert _{L^{2}\left(
\mathbb{R}
^{n}\right) }^{2}
\end{equation*}%
\begin{equation*}
-\int\limits_{%
\mathbb{R}
^{n}}f(\left\Vert \nabla u\left( t\right) \right\Vert _{L^{2}\left(
\mathbb{R}
^{n}\right) })\Delta u\eta _{r}^{2}u_{t}dx
\end{equation*}%
\begin{equation}
\leq c_{2}\left( \frac{1}{r}+\frac{1}{r}\left\Vert \sqrt{\beta }\eta
_{r}\nabla u_{t}\left( t\right) \right\Vert _{L^{2}\left( B\left(
0,2r\right) \right) }+\left\Vert h\right\Vert _{L^{2}\left(
\mathbb{R}
^{n}\backslash B\left( 0,r\right) \right) }\right) \text{, \ \ }\forall
r\geq r_{0}\text{.}  \tag{2.33}
\end{equation}%
Now, let us estimate the last term on the left hand side of (2.33). By Lemma
2.3, we have%
\begin{equation*}
-f(\left\Vert \nabla u\left( t\right) \right\Vert _{L^{2}\left(
\mathbb{R}
^{n}\right) })\int\limits_{%
\mathbb{R}
^{n}}\Delta u\eta _{r}^{2}u_{t}dx
\end{equation*}%
\begin{equation*}
\geq -\varepsilon \left\Vert \eta _{r}\Delta u\left( t\right) \right\Vert
_{L^{2}\left(
\mathbb{R}
^{n}\right) }\left\Vert \eta _{r}u_{t}\left( t\right) \right\Vert
_{L^{2}\left(
\mathbb{R}
^{n}\right) }-\frac{c_{3}}{r}
\end{equation*}%
\begin{equation}
+\frac{1}{2}f_{\delta }\left( \sqrt{\left\Vert \sqrt{\psi _{\varepsilon }}%
\nabla u\left( t\right) \right\Vert _{L^{2}\left(
\mathbb{R}
^{n}\right) }^{2}+\left\Vert \sqrt{\phi _{\varepsilon }}\nabla u\left(
t\right) \right\Vert _{L^{2}\left(
\mathbb{R}
^{n}\right) }^{2}}\right) \frac{d}{dt}\left( \left\Vert \eta _{r}\nabla
\left( u\left( t\right) \right) \right\Vert _{L^{2}\left(
\mathbb{R}
^{n}\right) }^{2}\right) .  \tag{2.34}
\end{equation}%
Moreover, for the last term on the right hand side of (2.34), by using the
definition of $f_{\delta }$ and the properties of $\psi _{\varepsilon }$ and
$\varphi _{\varepsilon }$, we obtain%
\begin{equation*}
\frac{1}{2}f_{\delta }\left( \sqrt{\left\Vert \sqrt{\psi _{\varepsilon }}%
\nabla u\left( t\right) \right\Vert _{L^{2}\left(
\mathbb{R}
^{n}\right) }^{2}+\left\Vert \sqrt{\varphi _{\varepsilon }}\nabla u\left(
t\right) \right\Vert _{L^{2}\left(
\mathbb{R}
^{n}\right) }^{2}}\right) \frac{d}{dt}\left( \left\Vert \eta _{r}\nabla
\left( u\left( t\right) \right) \right\Vert _{L^{2}\left(
\mathbb{R}
^{n}\right) }^{2}\right)
\end{equation*}%
\begin{equation*}
\geq \frac{1}{2}\frac{d}{dt}\left( f_{\delta }\left( \sqrt{\left\Vert \sqrt{%
\psi _{\varepsilon }}\nabla u\left( t\right) \right\Vert _{L^{2}\left(
\mathbb{R}
^{n}\right) }^{2}+\left\Vert \sqrt{\varphi _{\varepsilon }}\nabla u\left(
t\right) \right\Vert _{L^{2}\left(
\mathbb{R}
^{n}\right) }^{2}}\right) \left\Vert \eta _{r}\nabla \left( u\left( t\right)
\right) \right\Vert _{L^{2}\left(
\mathbb{R}
^{n}\right) }^{2}\right)
\end{equation*}%
\begin{equation}
-c_{4}\left( \left\Vert \sqrt{\beta }\nabla u_{t}\left( t\right) \right\Vert
_{L^{2}\left(
\mathbb{R}
^{n}\right) }+\left\Vert \sqrt{\alpha }u_{t}\left( t\right) \right\Vert
_{L^{2}\left(
\mathbb{R}
^{n}\right) }\right) \left\Vert \eta _{r}\nabla \left( u\left( t\right)
\right) \right\Vert _{L^{2}\left(
\mathbb{R}
^{n}\right) }^{2}.  \tag{2.35}
\end{equation}%
Considering (2.34) and (2.35) in (2.33), we obtain%
\begin{equation*}
\frac{1}{2}\frac{d}{dt}\left( \left\Vert \eta _{r}u_{t}\left( t\right)
\right\Vert _{L^{2}\left(
\mathbb{R}
^{n}\right) }^{2}+\gamma \left\Vert \eta _{r}\Delta u\left( t\right)
\right\Vert _{L^{2}\left(
\mathbb{R}
^{n}\right) }^{2}+\lambda \left\Vert \eta _{r}u\left( t\right) \right\Vert
_{L^{2}\left(
\mathbb{R}
^{n}\right) }^{2}\right)
\end{equation*}%
\begin{equation*}
+\frac{d}{dt}\left( \int\limits_{%
\mathbb{R}
^{n}}\eta _{r}^{2}\left( x\right) G\left( u\left( t,x\right) \right)
dx\right) +\left\Vert \sqrt{\beta }\eta _{r}\nabla u_{t}\left( t\right)
\right\Vert _{L^{2}\left(
\mathbb{R}
^{n}\right) }^{2}+\left\Vert \sqrt{\alpha }\eta _{r}u_{t}\left( t\right)
\right\Vert _{L^{2}\left(
\mathbb{R}
^{n}\right) }^{2}
\end{equation*}%
\begin{equation*}
+\frac{1}{2}\frac{d}{dt}\left( f_{\delta }\left( \sqrt{\left\Vert \sqrt{\psi
_{\varepsilon }}\nabla u\left( t\right) \right\Vert _{L^{2}\left(
\mathbb{R}
^{n}\right) }^{2}+\left\Vert \sqrt{\phi _{\varepsilon }}\nabla u\left(
t\right) \right\Vert _{L^{2}\left(
\mathbb{R}
^{n}\right) }^{2}}\right) \left\Vert \eta _{r}\nabla \left( u\left( t\right)
\right) \right\Vert _{L^{2}\left(
\mathbb{R}
^{n}\right) }^{2}\right)
\end{equation*}%
\begin{equation*}
-c_{4}\left( \left\Vert \sqrt{\beta }\nabla u_{t}\left( t\right) \right\Vert
_{L^{2}\left(
\mathbb{R}
^{n}\right) }+\left\Vert \sqrt{\alpha }u_{t}\left( t\right) \right\Vert
_{L^{2}\left(
\mathbb{R}
^{n}\right) }\right) \left\Vert \eta _{r}\nabla \left( u\left( t\right)
\right) \right\Vert _{L^{2}\left(
\mathbb{R}
^{n}\right) }^{2}.
\end{equation*}%
\begin{equation*}
-\varepsilon \left\Vert \eta _{r}\Delta u\left( t\right) \right\Vert
_{L^{2}\left(
\mathbb{R}
^{n}\right) }\left\Vert \eta _{r}u_{t}\left( t\right) \right\Vert
_{L^{2}\left(
\mathbb{R}
^{n}\right) }
\end{equation*}%
\begin{equation}
\leq c_{2}\left( \frac{1}{r}+\frac{1}{r}\left\Vert \sqrt{\beta }\eta
_{r}\nabla u_{t}\left( t\right) \right\Vert _{L^{2}\left( B\left(
0,2r\right) \right) }+\left\Vert h\right\Vert _{L^{2}\left(
\mathbb{R}
^{n}\backslash B\left( 0,r\right) \right) }\right) .  \tag{2.36}
\end{equation}%
Multiplying (1.1) with $\mu \eta _{r}^{2}u$, integrating the obtained
equality over $%
\mathbb{R}
^{n}$ and taking into account (1.6), (1.8) and (2.13), we get%
\begin{equation*}
\mu \gamma \left\Vert \eta _{r}\Delta u(t)\right\Vert _{L^{2}\left(
\mathbb{R}
^{n}\right) }^{2}+\mu \lambda \left\Vert \eta _{r}u(t)\right\Vert
_{L^{2}\left(
\mathbb{R}
^{n}\right) }^{2}-\mu \left\Vert \eta _{r}u_{t}\left( t\right) \right\Vert
_{L^{2}\left(
\mathbb{R}
^{n}\right) }^{2}
\end{equation*}%
\begin{equation*}
+\frac{\mu }{2}\frac{d}{dt}\left( \left\Vert \sqrt{\beta }\eta _{r}\nabla
u\left( t\right) \right\Vert _{L^{2}\left(
\mathbb{R}
^{n}\right) }^{2}+\left\Vert \sqrt{\alpha }\eta _{r}u\left( t\right)
\right\Vert _{L^{2}\left(
\mathbb{R}
^{n}\right) }^{2}\right)
\end{equation*}%
\begin{equation*}
+\mu \frac{d}{dt}\left( \int\limits_{%
\mathbb{R}
^{n}}\eta _{r}^{2}\left( x\right) u_{t}\left( t,x\right) u(t,x)dx\right)
\end{equation*}%
\begin{equation}
\leq c_{5}\left( \frac{1}{r}+\frac{1}{r}\left\Vert \sqrt{\beta }\eta
_{r}\nabla u_{t}\left( t\right) \right\Vert _{L^{2}\left( B\left(
0,2r\right) \right) }+\left\Vert h\right\Vert _{L^{2}\left(
\mathbb{R}
^{n}\backslash B\left( 0,r\right) \right) }\right) .  \tag{2.37}
\end{equation}%
Summing (2.36) and (2.37), applying Young inequality and choosing $%
\varepsilon $ and $\mu $ small enough, we obtain%
\begin{equation*}
\frac{d}{dt}\left( \left\Vert \eta _{r}u_{t}\left( t\right) \right\Vert
_{L^{2}\left(
\mathbb{R}
^{n}\right) }^{2}+\gamma \left\Vert \eta _{r}\Delta u\left( t\right)
\right\Vert _{L^{2}\left(
\mathbb{R}
^{n}\right) }^{2}+\lambda \left\Vert \eta _{r}u\left( t\right) \right\Vert
_{L^{2}\left(
\mathbb{R}
^{n}\right) }^{2}+\int\limits_{%
\mathbb{R}
^{n}\backslash B\left( 0,r\right) }\eta _{r}^{2}\left( x\right) G\left(
u\left( t,x\right) \right) dx\right)
\end{equation*}%
\begin{equation*}
+\frac{1}{2}\frac{d}{dt}\left( f_{\delta }\left( \sqrt{\left\Vert \sqrt{\psi
_{\varepsilon }}\nabla u\left( t\right) \right\Vert _{L^{2}\left(
\mathbb{R}
^{n}\right) }^{2}+\left\Vert \sqrt{\phi _{\varepsilon }}\nabla u\left(
t\right) \right\Vert _{L^{2}\left(
\mathbb{R}
^{n}\right) }^{2}}\right) \left\Vert \eta _{r}\nabla \left( u\left( t\right)
\right) \right\Vert _{L^{2}\left(
\mathbb{R}
^{n}\right) }^{2}\right)
\end{equation*}%
\begin{equation*}
+\frac{\mu }{2}\frac{d}{dt}\left( \left\Vert \sqrt{\beta }\eta _{r}\nabla
u\left( t\right) \right\Vert _{L^{2}\left(
\mathbb{R}
^{n}\right) }^{2}+\left\Vert \sqrt{\alpha }\eta _{r}u\left( t\right)
\right\Vert _{L^{2}\left(
\mathbb{R}
^{n}\right) }^{2}\right)
\end{equation*}%
\begin{equation*}
+c_{6}\left( \left\Vert \eta _{r}u_{t}\left( t\right) \right\Vert
_{L^{2}\left(
\mathbb{R}
^{n}\right) }^{2}+\gamma \left\Vert \eta _{r}\Delta u\left( t\right)
\right\Vert _{L^{2}\left(
\mathbb{R}
^{n}\right) }^{2}+\lambda \left\Vert \eta _{r}u\left( t\right) \right\Vert
_{L^{2}\left(
\mathbb{R}
^{n}\right) }^{2}\right)
\end{equation*}%
\begin{equation*}
\leq c_{7}\left( \left\Vert \sqrt{\beta }\nabla u_{t}\left( t\right)
\right\Vert _{L^{2}\left(
\mathbb{R}
^{n}\right) }+\left\Vert \sqrt{\alpha }u_{t}\left( t\right) \right\Vert
_{L^{2}\left(
\mathbb{R}
^{n}\right) }\right) \left\Vert \eta _{r}\nabla \left( u\left( t\right)
\right) \right\Vert _{L^{2}\left(
\mathbb{R}
^{n}\right) }^{2}
\end{equation*}%
\begin{equation*}
+c_{7}\left( \frac{1}{r}+\left\Vert h\right\Vert _{L^{2}\left(
\mathbb{R}
^{n}\backslash B\left( 0,r\right) \right) }\right) \text{, }\forall r\geq
r_{0}\text{,}
\end{equation*}%
where $c_{i}\left( i=6,7\right) $ are positive constants. By denoting
\begin{equation*}
\Phi \left( t\right) :=\left\Vert \eta _{r}u_{t}\left( t\right) \right\Vert
_{L^{2}\left(
\mathbb{R}
^{n}\right) }^{2}+\gamma \left\Vert \eta _{r}\Delta u\left( t\right)
\right\Vert _{L^{2}\left(
\mathbb{R}
^{n}\right) }^{2}+\lambda \left\Vert \eta _{r}u\left( t\right) \right\Vert
_{L^{2}\left(
\mathbb{R}
^{n}\right) }^{2}+\int\limits_{%
\mathbb{R}
^{n}}\eta _{r}^{2}\left( x\right) G\left( u\left( t,x\right) \right) dx
\end{equation*}%
\begin{equation*}
+\frac{1}{2}f_{\delta }\left( \sqrt{\left\Vert \sqrt{\psi _{\varepsilon }}%
\nabla u\left( t\right) \right\Vert _{L^{2}\left(
\mathbb{R}
^{n}\right) }^{2}+\left\Vert \sqrt{\phi _{\varepsilon }}\nabla u\left(
t\right) \right\Vert _{L^{2}\left(
\mathbb{R}
^{n}\right) }^{2}}\right) \left\Vert \eta _{r}\nabla \left( u\left( t\right)
\right) \right\Vert _{L^{2}\left(
\mathbb{R}
^{n}\right) }^{2}
\end{equation*}%
\begin{equation*}
+\frac{\mu }{2}\left( \left\Vert \sqrt{\beta }\eta _{r}\nabla u\left(
t\right) \right\Vert _{L^{2}\left(
\mathbb{R}
^{n}\backslash B\left( 0,r\right) \right) }^{2}+\left\Vert \sqrt{\alpha }%
\eta _{r}u\left( t\right) \right\Vert _{L^{2}\left(
\mathbb{R}
^{n}\backslash B\left( 0,r\right) \right) }^{2}\right) ,
\end{equation*}%
we get%
\begin{equation*}
\frac{d}{dt}\Phi \left( t\right) +c_{6}\left( \left\Vert \eta
_{r}u_{t}\left( t\right) \right\Vert _{L^{2}\left(
\mathbb{R}
^{n}\right) }^{2}+\gamma \left\Vert \eta _{r}\Delta u\left( t\right)
\right\Vert _{L^{2}\left(
\mathbb{R}
^{n}\right) }^{2}+\lambda \left\Vert \eta _{r}u\left( t\right) \right\Vert
_{L^{2}\left(
\mathbb{R}
^{n}\right) }^{2}\right)
\end{equation*}%
\begin{equation*}
\leq c_{7}\left( \left\Vert \sqrt{\beta }\nabla u_{t}\left( t\right)
\right\Vert _{L^{2}\left(
\mathbb{R}
^{n}\right) }+\left\Vert \sqrt{\alpha }u_{t}\left( t\right) \right\Vert
_{L^{2}\left(
\mathbb{R}
^{n}\right) }\right) \left\Vert \eta _{r}\nabla \left( u\left( t\right)
\right) \right\Vert _{L^{2}\left(
\mathbb{R}
^{n}\right) }^{2}
\end{equation*}%
\begin{equation}
+c_{7}\left( \frac{1}{r}+\left\Vert h\right\Vert _{L^{2}\left(
\mathbb{R}
^{n}\backslash B\left( 0,r\right) \right) }\right) \text{, }\forall r\geq
r_{0}\text{.}  \tag{2.38}
\end{equation}%
Moreover, there exist $\widehat{c}\equiv \widehat{c}\left( B\right) >0$ such
that%
\begin{equation*}
\left\Vert \eta _{r}u_{t}\left( t\right) \right\Vert _{L^{2}\left(
\mathbb{R}
^{n}\right) }^{2}+\gamma \left\Vert \eta _{r}\Delta u\left( t\right)
\right\Vert _{L^{2}\left(
\mathbb{R}
^{n}\right) }^{2}+\lambda \left\Vert \eta _{r}u\left( t\right) \right\Vert
_{L^{2}\left(
\mathbb{R}
^{n}\right) }^{2}
\end{equation*}%
\begin{equation}
\leq \Phi \left( t\right) \leq \widehat{c}\left( \left\Vert \eta
_{r}u_{t}\left( t\right) \right\Vert _{L^{2}\left(
\mathbb{R}
^{n}\right) }^{2}+\gamma \left\Vert \eta _{r}\Delta u\left( t\right)
\right\Vert _{L^{2}\left(
\mathbb{R}
^{n}\right) }^{2}+\lambda \left\Vert \eta _{r}u\left( t\right) \right\Vert
_{L^{2}\left(
\mathbb{R}
^{n}\right) }^{2}\right) .  \tag{2.39}
\end{equation}%
So, considering (2.39) in (2.38), we have%
\begin{equation*}
\frac{d}{dt}\Phi \left( t\right) +H\left( t\right) \Phi \left( t\right)
\end{equation*}%
\begin{equation*}
\leq c_{7}\left( \frac{1}{r}+\left\Vert h\right\Vert _{L^{2}\left(
\mathbb{R}
^{n}\backslash B\left( 0,r\right) \right) }\right) ,
\end{equation*}%
where $H\left( t\right) =c_{8}-c_{7}\left( \left\Vert \sqrt{\beta }\nabla
u_{t}\left( t\right) \right\Vert _{L^{2}\left(
\mathbb{R}
^{n}\right) }+\left\Vert \sqrt{\alpha }u_{t}\left( t\right) \right\Vert
_{L^{2}\left(
\mathbb{R}
^{n}\right) }\right) $ and $c_{8}>0$. Then, by Gronwall inequality, we obtain%
\begin{equation}
\Phi \left( t\right) \leq e^{-\int_{0}^{t}H\left( \tau \right) d\tau }\Phi
\left( 0\right) +c_{7}\left( \frac{1}{r}+\left\Vert h\right\Vert
_{L^{2}\left(
\mathbb{R}
^{n}\backslash B\left( 0,r\right) \right) }\right)
\int_{0}^{t}e^{-\int_{\tau }^{t}H\left( \sigma \right) d\sigma }d\tau .
\tag{2.40}
\end{equation}%
Furthermore, applying Young inequality and taking into account (1.9), we have%
\begin{equation*}
e^{-\int_{\tau }^{t}H\left( \sigma \right) d\sigma }\leq e^{-\frac{1}{2}%
c_{8}\left( t-\tau \right) +c_{9}\int_{\tau }^{t}\left( \left\Vert \sqrt{%
\beta }\nabla u_{t}\left( t\right) \right\Vert _{L^{2}\left(
\mathbb{R}
^{n}\right) }^{2}+\left\Vert \sqrt{\alpha }u_{t}\left( t\right) \right\Vert
_{L^{2}\left(
\mathbb{R}
^{n}\right) }^{2}\right) d\sigma \text{\ }}
\end{equation*}%
\begin{equation}
\leq c_{10}e^{-\frac{1}{2}c_{8}\left( t-\tau \right) }\text{, \ }\forall
t\geq \tau \geq 0\text{.}  \tag{2.41}
\end{equation}%
Therefore, considering (2.41) in (2.40), we get%
\begin{equation*}
\Phi \left( t\right) \leq c_{10}e^{-\frac{1}{2}c_{8}t}\Phi \left( 0\right)
+c_{11}\left( \frac{1}{r}+\left\Vert h\right\Vert _{L^{2}\left(
\mathbb{R}
^{n}\backslash B\left( 0,r\right) \right) }\right) \text{, \ }\forall t\geq 0%
\text{,}
\end{equation*}%
which completes the proof of the theorem.
\end{proof}

Now, we are in a position to prove the existence of the global attractor.

\begin{theorem}
Let the conditions (1.3)-(1.8) hold. Then the semigroup $\left\{ S\left(
t\right) \right\} _{t\geq 0}$ generated by the problem (1.1)-(1.2) possesses
a global attractor $\mathcal{A}$ in $H^{2}\left(
\mathbb{R}
^{n}\right) \times L^{2}\left(
\mathbb{R}
^{n}\right) $ and $\mathcal{A=M}^{u}\left( \mathcal{N}\right) $.
\end{theorem}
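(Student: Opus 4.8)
The plan is to recognize that $\{S(t)\}_{t\ge 0}$ is a gradient system and then invoke the abstract characterization of global attractors for such systems (see \cite{14}); the two substantive inputs are the interior asymptotic compactness (Theorem 2.1) and the tail estimate (Theorem 2.2) already established, while the present statement is essentially their packaging together with the Lyapunov structure.

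First I would introduce the functional
\[
\mathcal{L}(u,v)=\frac12\|v\|_{L^{2}(\mathbb{R}^{n})}^{2}+\frac{\gamma}{2}\|\Delta u\|_{L^{2}(\mathbb{R}^{n})}^{2}+\frac{\lambda}{2}\|u\|_{L^{2}(\mathbb{R}^{n})}^{2}+\int_{\mathbb{R}^{n}}G(u)\,dx+\frac12 F\!\left(\|\nabla u\|_{L^{2}(\mathbb{R}^{n})}^{2}\right)-\int_{\mathbb{R}^{n}}hu\,dx,
\]
which by (1.6)--(1.8) and the Sobolev embedding $H^{2}(\mathbb{R}^{n})\hookrightarrow L^{p+1}(\mathbb{R}^{n})$ secured by (1.7) is continuous on $H^{2}(\mathbb{R}^{n})\times L^{2}(\mathbb{R}^{n})$. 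Rewriting the energy equality (1.9) as
\[
\mathcal{L}(S(s)\varphi)-\mathcal{L}(S(t)\varphi)=\int_{s}^{t}\!\!\int_{\mathbb{R}^{n}}\left(\alpha(x)|u_{t}|^{2}+\beta(x)|\nabla u_{t}|^{2}\right)dx\,d\tau\ge 0,\qquad t\ge s\ge 0,
\]
shows that $\mathcal{L}$ is non-increasing along trajectories. To see it is a \emph{strict} Lyapunov function, suppose $\mathcal{L}(S(t)\varphi)=\mathcal{L}(\varphi)$ for all $t\ge 0$; then $\alpha(\cdot)|u_{t}(\tau,\cdot)|^{2}=\beta(\cdot)|\nabla u_{t}(\tau,\cdot)|^{2}=0$ a.e.\ for a.e.\ $\tau$. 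By (1.4) the function $u_{t}(\tau,\cdot)$ vanishes a.e.\ outside $B(0,r_{0})$, and inside $B(0,r_{0})$ one has $\nabla u_{t}(\tau,\cdot)=0$ a.e.: on $\{\alpha>0\}$ one gets $u_{t}=0$, hence $\nabla u_{t}=0$ a.e.\ there since the gradient of a Sobolev function vanishes a.e.\ on its level sets, while on $\{\alpha=0\}$ condition (1.5) forces $\beta>0$, hence $\nabla u_{t}=0$. Thus $u_{t}(\tau,\cdot)$ is a constant in $L^{2}(\mathbb{R}^{n})$, i.e.\ $u_{t}\equiv 0$, so $\varphi=(u_{0},0)$ is a stationary point. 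Therefore $\{S(t)\}_{t\ge 0}$ is a gradient system whose equilibrium set $\mathcal{N}$ consists of the pairs $(u,0)$ with $\gamma\Delta^{2}u+\lambda u-f(\|\nabla u\|_{L^{2}(\mathbb{R}^{n})})\Delta u+g(u)=h$.

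Next I would promote the compactness of $\{S(t)\}_{t\ge 0}$ to the whole of $\mathbb{R}^{n}$. Given a bounded set $\mathcal{B}$, points $\varphi_{k}\in\mathcal{B}$ and $t_{k}\to\infty$, Theorem 2.1 yields a subsequence along which $S(t_{k})\varphi_{k}$ converges in $H^{2}(B(0,R))\times L^{2}(B(0,R))$ for every $R>0$, whereas Theorem 2.2 makes the tail norm $\|S(t_{k})\varphi_{k}\|_{H^{2}(\mathbb{R}^{n}\setminus B(0,R))\times L^{2}(\mathbb{R}^{n}\setminus B(0,R))}$ uniformly small once $R$ and $k$ are large. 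Splitting $\mathbb{R}^{n}$ into $B(0,R)$ and its complement and combining the two facts shows that $\{S(t_{k})\varphi_{k}\}$ is Cauchy in $H^{2}(\mathbb{R}^{n})\times L^{2}(\mathbb{R}^{n})$; hence $\{S(t)\}_{t\ge 0}$ is asymptotically compact (it already maps bounded sets into bounded sets, cf.\ (2.13)).

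Finally, testing the stationary equation with $u$ and using $f\ge 0$ and $g(s)s\ge 0$ gives $\gamma\|\Delta u\|_{L^{2}(\mathbb{R}^{n})}^{2}+\lambda\|u\|_{L^{2}(\mathbb{R}^{n})}^{2}\le\|h\|_{L^{2}(\mathbb{R}^{n})}\|u\|_{L^{2}(\mathbb{R}^{n})}$, so $\mathcal{N}$ is bounded in $H^{2}(\mathbb{R}^{n})\times L^{2}(\mathbb{R}^{n})$; moreover, since $G\ge 0$, $F\ge 0$ and $\|u\|_{L^{2}}^{2}+\|\Delta u\|_{L^{2}}^{2}$ is equivalent to $\|u\|_{H^{2}}^{2}$, the functional $\mathcal{L}$ is coercive and bounded above on bounded sets, so every sublevel set $\{\mathcal{L}\le R\}$ is bounded. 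An asymptotically compact gradient system with a bounded equilibrium set and such a Lyapunov function possesses a compact global attractor equal to the unstable manifold $\mathcal{M}^{u}(\mathcal{N})$ emanating from $\mathcal{N}$ (see \cite[p.~359]{14}); applying this to $\{S(t)\}_{t\ge 0}$ yields the claim. I expect the only delicate point to be the strict-decrease property of $\mathcal{L}$: because (1.3)--(1.5) allow each of $\alpha$ and $\beta$ to vanish on a set of positive measure, excluding a nonconstant $u_{t}$ along an energy-conserving trajectory needs precisely the complementarity condition (1.5) together with the vanishing of Sobolev gradients on level sets, everything else being routine given Theorems 2.1 and 2.2.
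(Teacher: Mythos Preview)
Your proposal is correct and follows essentially the same route as the paper: combine Theorems~2.1 and~2.2 to get asymptotic compactness in $H^{2}(\mathbb{R}^{n})\times L^{2}(\mathbb{R}^{n})$, check that $\mathcal{N}$ is bounded, verify that the energy functional is a strict Lyapunov function, and invoke the gradient-system criterion from \cite[p.~359]{14}. The only visible difference is in the argument for strictness of the Lyapunov function: you show $\nabla u_{t}=0$ a.e.\ by splitting into $\{\alpha>0\}$ (using that Sobolev gradients vanish on level sets) and $\{\alpha=0\}\subset\{\beta>0\}$, whereas the paper argues more directly that $u_{t}\,u_{tx_{i}}=0$ a.e.\ (since at a.e.\ point either $u_{t}=0$ or $u_{tx_{i}}=0$), hence $\partial_{x_{i}}(u_{t}^{2})=0$ a.e., so $u_{t}^{2}$ is independent of $x$ and thus zero; this bypasses the level-set lemma, but the conclusion and overall structure are identical.
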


\begin{proof}
By Theorem 2.1 and Theorem 2.2, it follows that every sequence of the form $%
\left\{ S\left( t_{k}\right) \varphi _{k}\right\} _{k=1}^{\infty }$, where $%
\left\{ \varphi _{k}\right\} _{k=1}^{\infty }\subset \mathcal{B}$, $%
t_{k}\rightarrow \infty ,$ and $\mathcal{B}$ is bounded subset of $%
H^{2}\left(
\mathbb{R}
^{n}\right) \times L^{2}\left(
\mathbb{R}
^{n}\right) $,\ has a convergent subsequence in $H^{2}\left(
\mathbb{R}
^{n}\right) \times L^{2}\left(
\mathbb{R}
^{n}\right) $. Since, by (1.6) and (1.8), the set $\mathcal{N}$, which is
the set of stationary points of $\left\{ S\left( t\right) \right\} _{t\geq
0} $ is bounded in $H^{2}\left(
\mathbb{R}
^{n}\right) \times L^{2}\left(
\mathbb{R}
^{n}\right) $, to complete the proof, it is enough to show that the pair $%
\left( S\left( t\right) ,H^{2}\left(
\mathbb{R}
^{n}\right) \times L^{2}\left(
\mathbb{R}
^{n}\right) \right) $ is a gradient system (see \cite{14}).

Now, for $\left( u\left( t\right) ,u_{t}\left( t\right) \right) =S\left(
t\right) \left( u_{0},u_{1}\right) $, let the equality%
\begin{equation*}
L\left( u\left( t\right) ,u_{t}\left( t\right) \right) =L\left(
u_{0},u_{1}\right) \text{, \ }\forall t\geq 0\text{,}
\end{equation*}%
hold, where $L\left( u,v\right) =\frac{1}{2}\int\limits_{%
\mathbb{R}
^{n}}(\left\vert v\left( x\right) \right\vert ^{2}+\gamma \left\vert {\small %
\Delta u}\left( x\right) \right\vert ^{2}+\lambda \left\vert {\small u}%
\left( x\right) \right\vert ^{2})dx+\int\limits_{%
\mathbb{R}
^{n}}G\left( u\left( x\right) \right) dx+\frac{1}{2}F\left( \left\Vert
\nabla u\right\Vert _{L^{2}\left(
\mathbb{R}
^{n}\right) }^{2}\right) -\int\limits_{%
\mathbb{R}
^{n}}h\left( x\right) u(x)dx.$ Then considering (1.3) and (1.9), we have%
\begin{equation*}
\alpha u_{t}\left( t,\cdot \right) =0\text{ and }\beta \nabla u_{t}\left(
t,\cdot \right) =0\text{ \ a.e. in }%
\mathbb{R}
^{n},
\end{equation*}%
for $t\geq 0.$ Taking into account (1.5), from the above equalities, it
follows that%
\begin{equation*}
u_{t}\left( t,\cdot \right) u_{tx_{i}}\left( t,\cdot \right) =0\text{ a.e.
in }%
\mathbb{R}
^{n},
\end{equation*}%
and consequently%
\begin{equation*}
\frac{\partial }{\partial x_{i}}\left( u_{t}^{2}\left( t,\cdot \right)
\right) =0\text{ \ a.e. in }%
\mathbb{R}
^{n},
\end{equation*}%
for $i=\overline{1,n}$ and $t\geq 0.$ The last equality means that $%
u_{t}^{2}\left( t,\cdot \right) $ is independent of variable $x$, for every $%
t\geq 0.$ Hence, by $u_{t}\left( t,\cdot \right) \in L^{2}\left(
\mathbb{R}
^{n}\right) $, we have%
\begin{equation*}
u_{t}\left( t,\cdot \right) =0\text{ \ a.e. in }%
\mathbb{R}
^{n}\text{,}
\end{equation*}%
for $t\geq 0$. So,%
\begin{equation*}
\left( u\left( t\right) ,u_{t}\left( t\right) \right) =\left( \varphi
,0\right) \text{, \ \ }\forall t\geq 0,
\end{equation*}%
where $\left( \varphi ,0\right) \in \mathcal{N}$. Thus, the pair $\left(
S\left( t\right) ,H^{2}\left(
\mathbb{R}
^{n}\right) \times L^{2}\left(
\mathbb{R}
^{n}\right) \right) $ is a gradient system.
\end{proof}

\section{Regularity of the global attractor}

We start with the following lemma.

\begin{lemma}
Let the condition (1.7) hold and $K$ be a compact subset of $H^{2}\left(
\mathbb{R}
^{n}\right) $. Then for every $\varepsilon >0$ there exists a constant $%
C_{\epsilon }>0$ such that
\begin{equation}
\left\Vert g(u_{1})-g(u_{2})\right\Vert _{L^{2}\left(
\mathbb{R}
^{n}\right) }\leq \varepsilon \left\Vert u_{1}-u_{2}\right\Vert
_{H^{2}\left(
\mathbb{R}
^{n}\right) }+C_{\epsilon }\left\Vert u_{1}-u_{2}\right\Vert _{L^{2}\left(
\mathbb{R}
^{n}\right) },  \tag{3.1}
\end{equation}%
for every $u_{1},u_{2}\in K$.
\end{lemma}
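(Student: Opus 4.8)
The plan is to apply the mean value theorem to reduce (3.1) to an $L^{2}$-estimate of the products $|u_i|^{p-1}|u_1-u_2|$ ($i=1,2$), and then to split each such product according to whether $|u_i|$ lies below or above a threshold $M>0$: the part where $|u_i|\le M$ contributes the term $C_{\varepsilon}\|u_1-u_2\|_{L^{2}(\mathbb{R}^{n})}$, while the part where $|u_i|>M$ is made $\le\varepsilon\|u_1-u_2\|_{H^{2}(\mathbb{R}^{n})}$ by choosing $M$ large, which is where the compactness of $K$ is exploited. Since $|s|^{p-1}\le 2+|s|^{q-1}$ for every $q\ge p$ and $s\in\mathbb{R}$, hypothesis (1.7) allows us to assume, without loss of generality, that $1+\tfrac{4}{n}\le p\le\tfrac{n}{n-4}$ when $n\ge 5$; for $n\le 3$ the embedding $H^{2}(\mathbb{R}^{n})\hookrightarrow L^{\infty}(\mathbb{R}^{n})$ together with the boundedness of $K$ makes (3.1) immediate (with $C_{\varepsilon}=\sup_{|s|\le M_{0}}|g'(s)|$, $M_{0}=\sup_{u\in K}\|u\|_{L^{\infty}(\mathbb{R}^{n})}$), and the case $n=4$ is handled exactly as the case $n\ge 5$ below, with the Sobolev exponent $\tfrac{2n}{n-4}$ replaced by any sufficiently large finite exponent.

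By the mean value theorem and (1.7),
\begin{equation*}
|g(u_1(x))-g(u_2(x))|\le C\bigl(1+|u_1(x)|^{p-1}+|u_2(x)|^{p-1}\bigr)\,|u_1(x)-u_2(x)|\qquad\text{a.e. in }\mathbb{R}^{n},
\end{equation*}
and for $M>0$ we write $|u_i|^{p-1}\le M^{p-1}+|u_i|^{p-1}\chi_{\{|u_i|>M\}}$. Applying Hölder's inequality with the exponents $\tfrac{n}{2}$ and $2^{\ast\ast}:=\tfrac{2n}{n-4}$ (which are conjugate with respect to $\tfrac{1}{2}$), together with the Sobolev embedding $H^{2}(\mathbb{R}^{n})\hookrightarrow L^{2^{\ast\ast}}(\mathbb{R}^{n})$, and noting that $|u_i|^{p-1}\in L^{n/2}(\mathbb{R}^{n})$ for $u_i\in H^{2}(\mathbb{R}^{n})$ precisely because $(p-1)\tfrac{n}{2}\in[2,2^{\ast\ast}]$ (this is exactly where the subcriticality $(n-4)p\le n$ is used), one obtains
\begin{equation*}
\begin{split}
\|g(u_1)-g(u_2)\|_{L^{2}(\mathbb{R}^{n})}
&\le C\bigl(1+2M^{p-1}\bigr)\|u_1-u_2\|_{L^{2}(\mathbb{R}^{n})}\\
&\quad+Cc_{S}\sum_{i=1}^{2}\bigl\|\,|u_i|^{p-1}\chi_{\{|u_i|>M\}}\bigr\|_{L^{n/2}(\mathbb{R}^{n})}\,\|u_1-u_2\|_{H^{2}(\mathbb{R}^{n})}.
\end{split}
\end{equation*}
Hence everything reduces to the uniform tail estimate
\begin{equation*}
\lim_{M\to\infty}\ \sup_{u\in K}\ \int_{\{|u|>M\}}|u|^{q}\,dx=0,\qquad q:=(p-1)\tfrac{n}{2}\in[2,2^{\ast\ast}].
\end{equation*}

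To prove this estimate, which is the crux, observe first that $K$ is compact in $L^{q}(\mathbb{R}^{n})$, since $H^{2}(\mathbb{R}^{n})\hookrightarrow L^{q}(\mathbb{R}^{n})$ continuously. For $M>0$ define $\Xi_{M}\colon L^{q}(\mathbb{R}^{n})\to[0,\infty)$ by $\Xi_{M}(u):=\bigl\|\,(|u|-\tfrac{M}{2})_{+}\bigr\|_{L^{q}(\mathbb{R}^{n})}$. Since $t\mapsto(t-\tfrac{M}{2})_{+}$ is $1$-Lipschitz and $\bigl||a|-|b|\bigr|\le|a-b|$, each $\Xi_{M}$ is $1$-Lipschitz on $L^{q}(\mathbb{R}^{n})$; moreover $M\mapsto\Xi_{M}(u)$ is nonincreasing and, by dominated convergence, tends to $0$ as $M\to\infty$ for every fixed $u\in L^{q}(\mathbb{R}^{n})$. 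By Dini's theorem applied on the compact set $K$, $\sup_{u\in K}\Xi_{M}(u)\to 0$ as $M\to\infty$. Since on $\{|u|>M\}$ one has $(|u|-\tfrac{M}{2})_{+}=|u|-\tfrac{M}{2}>\tfrac{1}{2}|u|$, it follows that $\int_{\{|u|>M\}}|u|^{q}\,dx\le 2^{q}\Xi_{M}(u)^{q}$, which establishes the tail estimate and, equivalently, $\sup_{u\in K}\bigl\|\,|u|^{p-1}\chi_{\{|u|>M\}}\bigr\|_{L^{n/2}(\mathbb{R}^{n})}\to 0$. Finally, given $\varepsilon>0$ we choose $M$ so large that $2Cc_{S}\sup_{u\in K}\bigl\|\,|u|^{p-1}\chi_{\{|u|>M\}}\bigr\|_{L^{n/2}(\mathbb{R}^{n})}\le\varepsilon$, and then the displayed inequality gives (3.1) with $C_{\varepsilon}=C(1+2M^{p-1})$.

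The step I expect to be the main obstacle is exactly this uniform tail estimate — promoting the qualitative compactness of $K$ in $L^{q}(\mathbb{R}^{n})$ to the quantitative, uniform smallness of $\int_{\{|u|>M\}}|u|^{q}$. The device of dominating the tail by the $1$-Lipschitz functionals $\Xi_{M}$ and then invoking Dini's theorem is what makes this transparent; the alternative, arguing via finite $\varepsilon$-nets of $K$ combined with the uniform integrability of finitely many fixed functions, is more cumbersome. The remaining bookkeeping — the reduction to $p\ge 1+\tfrac{4}{n}$, the choice of Hölder exponents, and the low-dimensional cases — is routine, but one must verify that $|u_i|^{p-1}$ indeed lies in the Lebesgue space dictated by Hölder's inequality, which is guaranteed by the hypothesis $(n-4)p\le n$.
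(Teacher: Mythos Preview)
Your proof is correct and follows essentially the same strategy as the paper's: mean value theorem, a threshold splitting at level $M$, H\"older's inequality paired with the critical Sobolev embedding $H^{2}(\mathbb{R}^{n})\hookrightarrow L^{2n/(n-4)^{+}}(\mathbb{R}^{n})$, and finally the uniform tail estimate coming from the compactness of $K$. The paper treats all dimensions at once via the exponent $q=\max\{1,n/4\}$ and simply asserts the analogue of your tail estimate as (3.3), whereas you split off the low-dimensional cases and justify the tail estimate explicitly via the $1$-Lipschitz functionals $\Xi_{M}$ and Dini's theorem; these are cosmetic differences, not a different route.
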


\begin{proof}
By Mean Value Theorem, H\"{o}lder inequality and\ the embedding $H^{2}\left(
\mathbb{R}
^{n}\right) \hookrightarrow L^{\frac{2n}{\left( n-4\right) ^{+}}}\left(
\mathbb{R}
^{n}\right) \cap L^{2}\left(
\mathbb{R}
^{n}\right) $, we have%
\begin{equation*}
\left\Vert g\left( u\right) -g\left( v\right) \right\Vert _{L^{2}\left(
\mathbb{R}
^{n}\right) }^{2}=\int\limits_{%
\mathbb{R}
^{n}}\left\vert \int\limits_{0}^{1}g^{\prime }\left( \tau u\left( x\right)
+\left( 1-\tau \right) v\left( x\right) \right) d\tau \right\vert
^{2}\left\vert u\left( x\right) -v\left( x\right) \right\vert ^{2}dx
\end{equation*}%
\begin{equation*}
\leq \int\limits_{0}^{1}\int\limits_{\left\{ x\in
\mathbb{R}
^{n}:\left\vert \tau u\left( x\right) +\left( 1-\tau \right) v\left(
x\right) \right\vert >M\right\} }\left\vert g^{\prime }\left( \tau u\left(
x\right) +\left( 1-\tau \right) v\left( x\right) \right) \right\vert
^{2}\left\vert u\left( x\right) -v\left( x\right) \right\vert ^{2}dxd\tau
\end{equation*}%
\begin{equation*}
+\left\Vert g^{\prime }\right\Vert _{C\left[ -M,M\right] }\left\Vert
u-v\right\Vert _{L^{2}\left(
\mathbb{R}
^{n}\right) }^{2}
\end{equation*}%
\begin{equation*}
\leq c_{1}\int\limits_{\left\{ x\in
\mathbb{R}
^{n}:\left\vert u\left( x\right) \right\vert +\left\vert v\left( x\right)
\right\vert >M\right\} }\left( 1+\left\vert u\left( x\right) \right\vert
^{2\left( p-1\right) }+\left\vert v\left( x\right) \right\vert ^{2\left(
p-1\right) }\right) \left\vert u\left( x\right) -v\left( x\right)
\right\vert ^{2}dx
\end{equation*}%
\begin{equation*}
+\left\Vert g^{\prime }\right\Vert _{C\left[ -M,M\right] }\left\Vert
u-v\right\Vert _{L^{2}\left(
\mathbb{R}
^{n}\right) }^{2}
\end{equation*}%
\begin{equation*}
\leq c_{2}\left( \int\limits_{\left\{ x\in
\mathbb{R}
^{n}:\left\vert u\left( x\right) \right\vert +\left\vert v\left( x\right)
\right\vert >M\right\} }\left( 1+\left\vert u\left( x\right) \right\vert
^{2\left( p-1\right) q}+\left\vert v\left( x\right) \right\vert ^{2\left(
p-1\right) q}\right) dx\right) ^{\frac{1}{q}}
\end{equation*}%
\begin{equation}
\times \left\Vert u-v\right\Vert _{H^{2}\left(
\mathbb{R}
^{n}\right) }+\left\Vert g^{\prime }\right\Vert _{C\left[ -M,M\right]
}\left\Vert u-v\right\Vert _{L^{2}\left(
\mathbb{R}
^{n}\right) }^{2},  \tag{3.2}
\end{equation}%
where $q=$ $\max \left\{ 1,\frac{n}{4}\right\} $.

Since, by (1.7), $H^{2}\left(
\mathbb{R}
^{n}\right) \hookrightarrow L^{2\left( p-1\right) q}\left(
\mathbb{R}
^{n}\right) $, we have that $K$ is compact subset of $L^{2\left( p-1\right)
q}\left(
\mathbb{R}
^{n}\right) .$ Hence,%
\begin{equation}
\lim_{M\rightarrow \infty }\sup_{u,v\in K}\int\limits_{\left\{ x\in
\mathbb{R}
^{n}:\left\vert u\left( x\right) \right\vert +\left\vert v\left( x\right)
\right\vert >M\right\} }\left( 1+\left\vert u\left( x\right) \right\vert
^{2\left( p-1\right) q}+\left\vert v\left( x\right) \right\vert ^{2\left(
p-1\right) q}\right) dx=0.  \tag{3.3}
\end{equation}%
Thus, (3.2) and (3.3) give us (3.1).
\end{proof}

\begin{theorem}
The global attractor $\mathcal{A}$ is bounded in $H^{3}\left(
\mathbb{R}
^{n}\right) \times H^{2}\left(
\mathbb{R}
^{n}\right) $.
\end{theorem}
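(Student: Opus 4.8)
The plan is to use the invariance of $\mathcal{A}$ to produce complete bounded trajectories, upgrade their regularity in the time variable, and then bootstrap through the elliptic operator $\gamma\Delta^{2}+\lambda I$ appearing in (1.1). By invariance and the boundedness of $\mathcal{A}$ in $H^{2}(\mathbb{R}^{n})\times L^{2}(\mathbb{R}^{n})$, every $\varphi\in\mathcal{A}$ lies on a complete trajectory $(u(t),u_{t}(t))$, $t\in\mathbb{R}$, with $\sup_{t\in\mathbb{R}}\|(u(t),u_{t}(t))\|_{H^{2}(\mathbb{R}^{n})\times L^{2}(\mathbb{R}^{n})}\le C_{0}$, where $C_{0}$ depends only on $\mathcal{A}$; reading (1.1) as an identity in $H^{-2}(\mathbb{R}^{n})$ then gives $\sup_{t\in\mathbb{R}}\|u_{tt}(t)\|_{H^{-2}(\mathbb{R}^{n})}\le C_{0}'$ as well.

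The central step is to show that $(u_{t},u_{tt})$ is bounded in $H^{2}(\mathbb{R}^{n})\times L^{2}(\mathbb{R}^{n})$ uniformly over $\mathcal{A}$. For $h\in(0,1]$ set $z_{h}(t)=h^{-1}\big((u(t),u_{t}(t))-(u(t-h),u_{t}(t-h))\big)=:(p_{h}(t),p_{h,t}(t))$. Subtracting (1.1) at times $t$ and $t-h$ and dividing by $h$, one sees that $p_{h}$ satisfies a plate equation with exactly the principal part, the localized strong and weak damping and the nonlocal structure of (1.1), the only new features being the forcing terms $h^{-1}\big(f(\|\nabla u(t)\|_{L^{2}(\mathbb{R}^{n})})-f(\|\nabla u(t-h)\|_{L^{2}(\mathbb{R}^{n})})\big)\Delta u(t)$ and $-h^{-1}\big(g(u(t))-g(u(t-h))\big)$. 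Since $t\mapsto\|\nabla u(t)\|_{L^{2}(\mathbb{R}^{n})}$ is Lipschitz on the trajectory (with constant $\le C_{0}$) and $f\in C^{1}$, the first of these is bounded in $L^{2}(\mathbb{R}^{n})$ by a constant, while by Lemma 3.1 applied to the compact set $K=\{u:(u,v)\in\mathcal{A}\}\subset H^{2}(\mathbb{R}^{n})$ the second is, for every $\epsilon>0$, bounded in $L^{2}(\mathbb{R}^{n})$ by $\epsilon\|z_{h}(t)\|_{H^{2}\times L^{2}}+C_{\epsilon}\|p_{h}(t)\|_{L^{2}(\mathbb{R}^{n})}$, i.e.\ a subordinate perturbation. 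Re-running for $z_{h}$ the interior multiplier estimates of Theorem 2.1 (cf.\ (2.17)--(2.23), with the nonlocal term handled as in Lemmas 2.1 and 2.3) together with the tail estimate of Theorem 2.2, now with these two terms absorbed into the left-hand side rather than regarded as an external datum, yields an estimate of the form
\[
\|z_{h}(t)\|_{H^{2}\times L^{2}}^{2}\le C\,e^{-\kappa(t-s)}\|z_{h}(s)\|_{H^{2}\times L^{2}}^{2}+C\sup_{\sigma\in[s,t]}\|z_{h}(\sigma)\|_{L^{2}\times H^{-2}}^{2},\qquad t\ge s,
\]
with $C$ and $\kappa>0$ independent of $h$. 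Because $p_{h}(t)=h^{-1}\int_{t-h}^{t}u_{t}(\sigma)\,d\sigma$ and $p_{h,t}(t)=h^{-1}\int_{t-h}^{t}u_{tt}(\sigma)\,d\sigma$, the right-most supremum is bounded uniformly in $h$ in terms of $C_{0}$ and $C_{0}'$; since for each fixed $h$ one trivially has $\|z_{h}\|_{L^{\infty}(\mathbb{R};H^{2}\times L^{2})}\le 2C_{0}h^{-1}<\infty$, letting $s\to-\infty$ gives $\sup_{t\in\mathbb{R}}\|z_{h}(t)\|_{H^{2}\times L^{2}}\le C_{1}$ with $C_{1}$ independent of $h$. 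Passing to the limit $h\to0$, $z_{h}(t)\rightharpoonup(u_{t}(t),u_{tt}(t))$ in $H^{2}\times L^{2}$, and hence $\sup_{t\in\mathbb{R}}\|(u_{t}(t),u_{tt}(t))\|_{H^{2}(\mathbb{R}^{n})\times L^{2}(\mathbb{R}^{n})}\le C_{1}$ uniformly over $\mathcal{A}$. I expect this step to be the main obstacle: it amounts to reproducing the degenerate-damping multiplier analysis of Section 2 for the linearized problem, the forcing terms being controlled by Lemma 3.1 and the $C^{1}$-regularity of $f$.

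It remains to bootstrap. Writing (1.1) as $\gamma\Delta^{2}u+\lambda u=-u_{tt}+\mathrm{div}(\beta\nabla u_{t})-\alpha u_{t}+f(\|\nabla u\|_{L^{2}(\mathbb{R}^{n})})\Delta u-g(u)+h$, the preceding step shows that $u_{tt}$, $\alpha u_{t}$, $f(\|\nabla u\|_{L^{2}(\mathbb{R}^{n})})\Delta u$ and $h$ are bounded in $L^{2}(\mathbb{R}^{n})$; that $g(u)$ is bounded in $L^{2}(\mathbb{R}^{n})$ by (1.7) and the embedding $H^{2}(\mathbb{R}^{n})\hookrightarrow L^{2p}(\mathbb{R}^{n})$; and that $\mathrm{div}(\beta\nabla u_{t})$ is bounded in $H^{-1}(\mathbb{R}^{n})$ because $\beta\in L^{\infty}(\mathbb{R}^{n})$ and $u_{t}$ is bounded in $H^{2}(\mathbb{R}^{n})$. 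Hence the right-hand side is bounded in $H^{-1}(\mathbb{R}^{n})$ uniformly over $\mathcal{A}$, and since the Fourier multiplier $(\gamma|\xi|^{4}+\lambda)^{-1}$ maps $H^{-1}(\mathbb{R}^{n})$ boundedly into $H^{3}(\mathbb{R}^{n})$, $u$ is bounded in $H^{3}(\mathbb{R}^{n})$ uniformly over $\mathcal{A}$. Together with the uniform bound on $u_{t}$ in $H^{2}(\mathbb{R}^{n})$, this shows that $\mathcal{A}$ is bounded in $H^{3}(\mathbb{R}^{n})\times H^{2}(\mathbb{R}^{n})$.
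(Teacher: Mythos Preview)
Your overall strategy coincides with the paper's: pick a complete trajectory in $\mathcal{A}$, pass to difference quotients, use Lemma~3.1 to make the $g$-term a subordinate perturbation, derive a dissipative energy inequality for the difference quotient, send the initial time to $-\infty$, let the increment tend to zero, and finally bootstrap through $\gamma\Delta^{2}+\lambda I$. The paper does exactly this, so your plan is correct.

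There is, however, a genuine discrepancy in the ``central step''. You propose to obtain the dissipative estimate for $z_{h}$ by \emph{re-running the estimates of Theorems~2.1 and~2.2}. The paper does not do that: the Section~2 estimates are limsup--limsup statements along weakly convergent sequences (Theorem~2.1) and a tail-smallness statement involving Lemma~2.3 (Theorem~2.2), and neither delivers directly a differential inequality of the type you write. Instead, the paper builds a \emph{new} global Lyapunov functional $\Psi$ for the difference quotient $v=\sigma^{-1}(u(\cdot+\sigma)-u)$ by combining three multipliers applied to the equation for $v$: namely $v_{t}$ (giving (3.9)), the radial multiplier $\sum_{i}x_{i}(1-\eta_{2r_{0}})v_{x_{i}}+\tfrac{n-1}{2}(1-\eta_{2r_{0}})v$ (giving (3.10)), and $\eta_{r_{0}}^{2}v$ (giving (3.11)). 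Weighted with $1,\delta^{2},\delta$, these sum to the clean differential inequality $\tfrac{d}{dt}\Psi+c_{7}E(v)\le c_{8}$ with a \emph{constant} right-hand side (the low-order norm $\|v\|_{L^{2}}$ is bounded a~priori by~(3.6) and simply absorbed into $c_{8}$), from which $\Psi(t)\le e^{-c_{9}(t-s)}\Psi(s)+c_{8}/c_{9}$ and the conclusion follow. Your variant, with the lower-order remainder $\sup\|z_{h}\|_{L^{2}\times H^{-2}}$ on the right, is also workable, but it does not come for free from Section~2; you would still have to carry out a multiplier computation of the type (3.9)--(3.11).

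Your bootstrapping paragraph is actually more explicit than the paper's: the paper simply says ``considering (3.14) in (1.1)'', while you correctly isolate that $\mathrm{div}(\beta\nabla u_{t})$ is only in $H^{-1}$ (since $\beta\in L^{\infty}$), so that $(\gamma\Delta^{2}+\lambda I)u\in H^{-1}$ and the Fourier multiplier gives $u\in H^{3}$.
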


\begin{proof}
Let $\varphi \in \mathcal{A}$. Since $\mathcal{A}$ is invariant, there
exists an invariant trajectory $\Gamma =\left\{ \left( u\left( t\right)
,u_{t}\left( t\right) \right) :t\in
\mathbb{R}
\right\} $\newline
$\subset \mathcal{A}$ such that $\left( u\left( 0\right) ,u_{t}\left(
0\right) \right) =\varphi $ (see \cite[p. 159]{18}). Now, let us define
\begin{equation*}
v\left( t,x\right) :=\frac{u\left( t+\sigma ,x\right) -u\left( t,x\right) }{%
\sigma }\text{, \ }\sigma >0\text{.}
\end{equation*}%
Then, by (1.1), we get%
\begin{equation*}
v_{tt}(t,x)+\gamma \Delta ^{2}v(t,x)-{div}\left( \beta \left( x\right)
\nabla v_{t}\right) +\alpha (x)v_{t}(t,x)+\lambda v(t,x)
\end{equation*}%
\begin{equation*}
-f\left( \left\Vert \nabla u\left( t\right) \right\Vert _{L^{2}\left(
\mathbb{R}
^{n}\right) }\right) \Delta v\left( t,x\right) -\frac{f(\left\Vert \nabla
u\left( t+\sigma \right) \right\Vert _{L^{2}\left(
\mathbb{R}
^{n}\right) })-f\left( \left\Vert \nabla u\left( t\right) \right\Vert
_{L^{2}\left(
\mathbb{R}
^{n}\right) }\right) }{\sigma }\Delta u(t+\sigma ,x)
\end{equation*}%
\begin{equation}
+\frac{g\left( u\left( t+\sigma ,x\right) \right) -g\left( u\left(
t,x\right) \right) }{\sigma }=0\text{, \ \ \ }(t,x)\in
\mathbb{R}
\times
\mathbb{R}
^{n}\text{.}  \tag{3.4}
\end{equation}%
Multiplying (3.4) by $v_{t}$ and integrating the obtained equality over $%
\mathbb{R}
^{n}$, we find%
\begin{equation*}
\frac{d}{dt}E(v(t))+\left\Vert \sqrt{\beta }\nabla v_{t}\left( t\right)
\right\Vert _{L^{2}\left(
\mathbb{R}
^{n}\right) }^{2}+\left\Vert \sqrt{\alpha }v_{t}\left( t\right) \right\Vert
_{L^{2}\left(
\mathbb{R}
^{n}\right) }^{2}
\end{equation*}%
\begin{equation*}
\leq -\frac{1}{2}f\left( \left\Vert \nabla u\left( t\right) \right\Vert
_{L^{2}\left(
\mathbb{R}
^{n}\right) }\right) \frac{d}{dt}\left( \left\Vert \nabla v\left( t\right)
\right\Vert _{L^{2}\left(
\mathbb{R}
^{n}\right) }^{2}\right) +\frac{f(\left\Vert \nabla u\left( t+\sigma \right)
\right\Vert _{L^{2}\left(
\mathbb{R}
^{n}\right) })-f\left( \left\Vert \nabla u\left( t\right) \right\Vert
_{L^{2}\left(
\mathbb{R}
^{n}\right) }\right) }{\sigma }
\end{equation*}%
\begin{equation*}
\times \int\limits_{%
\mathbb{R}
^{n}}\Delta u(t+\sigma ,x)v_{t}\left( t,x\right) dx-\frac{1}{\sigma }%
\int\limits_{%
\mathbb{R}
^{n}}\left( g\left( u\left( t+\sigma ,x\right) \right) -g\left( u\left(
t,x\right) \right) \right) v_{t}\left( t,x\right) dx
\end{equation*}%
\begin{equation*}
\leq -\frac{1}{2}f\left( \left\Vert \nabla u\left( t\right) \right\Vert
_{L^{2}\left(
\mathbb{R}
^{n}\right) }\right) \frac{d}{dt}\left( \left\Vert \nabla v\left( t\right)
\right\Vert _{L^{2}\left(
\mathbb{R}
^{n}\right) }^{2}\right)
\end{equation*}%
\begin{equation*}
+c_{1}\left\Vert \nabla v\left( t\right) \right\Vert _{L^{2}\left(
\mathbb{R}
^{n}\right) }\left\Vert v_{t}\left( t\right) \right\Vert _{L^{2}\left(
\mathbb{R}
^{n}\right) }+\frac{1}{\sigma }\left\Vert g\left( u\left( t+\sigma \right)
\right) -g\left( u\left( t\right) \right) \right\Vert _{L^{2}\left(
\mathbb{R}
^{n}\right) }\left\Vert v_{t}\left( t\right) \right\Vert _{L^{2}\left(
\mathbb{R}
^{n}\right) }.
\end{equation*}%
Taking into account Lemma 3.1 in the last inequality, we obtain%
\begin{equation*}
\frac{d}{dt}E(v(t))+\left\Vert \sqrt{\beta }\nabla v_{t}\left( t\right)
\right\Vert _{L^{2}\left(
\mathbb{R}
^{n}\right) }^{2}+\left\Vert \sqrt{\alpha }v_{t}\left( t\right) \right\Vert
_{L^{2}\left(
\mathbb{R}
^{n}\right) }^{2}
\end{equation*}%
\begin{equation*}
\leq -\frac{1}{2}f\left( \left\Vert \nabla u\left( t\right) \right\Vert
_{L^{2}\left(
\mathbb{R}
^{n}\right) }\right) \frac{d}{dt}\left( \left\Vert \nabla v\left( t\right)
\right\Vert _{L^{2}\left(
\mathbb{R}
^{n}\right) }^{2}\right)
\end{equation*}%
\begin{equation}
+c_{1}\left\Vert \nabla v\left( t\right) \right\Vert _{L^{2}\left(
\mathbb{R}
^{n}\right) }\left\Vert v_{t}\left( t\right) \right\Vert _{L^{2}\left(
\mathbb{R}
^{n}\right) }+\left( \varepsilon \left\Vert v\left( t\right) \right\Vert
_{H^{2}\left(
\mathbb{R}
^{n}\right) }+C_{\varepsilon }\left\Vert v\left( t\right) \right\Vert
_{L^{2}\left(
\mathbb{R}
^{n}\right) }\right) \left\Vert v_{t}\left( t\right) \right\Vert
_{L^{2}\left(
\mathbb{R}
^{n}\right) },  \tag{3.5}
\end{equation}%
for any $\varepsilon >0$. Moreover, by (2.13), we have
\begin{equation}
\left\Vert v\left( t\right) \right\Vert _{L^{2}\left(
\mathbb{R}
^{n}\right) }=\left\Vert \frac{u\left( t+\sigma ,x\right) -u\left(
t,x\right) }{\sigma }\right\Vert _{L^{2}\left(
\mathbb{R}
^{n}\right) }\leq \sup_{0\leq t<\infty }\left\Vert u_{t}\left( t\right)
\right\Vert _{L^{2}\left(
\mathbb{R}
^{n}\right) }<\widehat{C}\text{, \ }\forall t\in
\mathbb{R}
\text{.}  \tag{3.6}
\end{equation}%
Then, considering (3.6) in (3.5), we get%
\begin{equation*}
\frac{d}{dt}E(v(t))+\left\Vert \sqrt{\beta }\nabla v_{t}\left( t\right)
\right\Vert _{L^{2}\left(
\mathbb{R}
^{n}\right) }^{2}+\left\Vert \sqrt{\alpha }v_{t}\left( t\right) \right\Vert
_{L^{2}\left(
\mathbb{R}
^{n}\right) }^{2}
\end{equation*}%
\begin{equation*}
\leq -\frac{1}{2}f\left( \left\Vert \nabla u\left( t\right) \right\Vert
_{L^{2}\left(
\mathbb{R}
^{n}\right) }\right) \frac{d}{dt}\left( \left\Vert \nabla v\left( t\right)
\right\Vert _{L^{2}\left(
\mathbb{R}
^{n}\right) }^{2}\right)
\end{equation*}%
\begin{equation}
+\left( c_{2}\left\Vert v\left( t\right) \right\Vert _{H^{2}\left(
\mathbb{R}
^{n}\right) }^{\frac{1}{2}}+\varepsilon \left\Vert v\left( t\right)
\right\Vert _{H^{2}\left(
\mathbb{R}
^{n}\right) }+\widetilde{C_{\varepsilon }}\right) \left\Vert v_{t}\left(
t\right) \right\Vert _{L^{2}\left(
\mathbb{R}
^{n}\right) }.  \tag{3.7}
\end{equation}%
Now, let us estimate the first term on the right hand side of (3.7). By
(2.13) and (3.6), we have%
\begin{equation*}
-\frac{1}{2}f\left( \left\Vert \nabla u\left( t\right) \right\Vert
_{L^{2}\left(
\mathbb{R}
^{n}\right) }\right) \frac{d}{dt}\left( \left\Vert \nabla v\left( t\right)
\right\Vert _{L^{2}\left(
\mathbb{R}
^{n}\right) }^{2}\right)
\end{equation*}%
\begin{equation*}
\leq c_{3}\max_{0\leq s_{1},s_{2}\leq \varepsilon }\left\vert f\left(
s_{1}\right) -f\left( s_{2}\right) \right\vert \left\Vert v_{t}\left(
t\right) \right\Vert _{L^{2}\left(
\mathbb{R}
^{n}\right) }\left\Vert \Delta v\left( t\right) \right\Vert _{L^{2}\left(
\mathbb{R}
^{n}\right) }
\end{equation*}%
\begin{equation*}
-\frac{1}{2}f_{\varepsilon }\left( \left\Vert \nabla u\left( t\right)
\right\Vert _{L^{2}\left(
\mathbb{R}
^{n}\right) }\right) \frac{d}{dt}\left( \left\Vert \nabla v\left( t\right)
\right\Vert _{L^{2}\left(
\mathbb{R}
^{n}\right) }^{2}\right)
\end{equation*}%
\begin{equation*}
\leq c_{3}\max_{0\leq s_{1},s_{2}\leq \varepsilon }\left\vert f\left(
s_{1}\right) -f\left( s_{2}\right) \right\vert \left\Vert v_{t}\left(
t\right) \right\Vert _{L^{2}\left(
\mathbb{R}
^{n}\right) }\left\Vert \Delta v\left( t\right) \right\Vert _{L^{2}\left(
\mathbb{R}
^{n}\right) }
\end{equation*}%
\begin{equation}
-\frac{1}{2}\frac{d}{dt}\left( f_{\varepsilon }\left( \left\Vert \nabla
u\left( t\right) \right\Vert _{L^{2}\left(
\mathbb{R}
^{n}\right) }\right) \left\Vert \nabla v\left( t\right) \right\Vert
_{L^{2}\left(
\mathbb{R}
^{n}\right) }^{2}\right) +c_{4}\left\Vert v\left( t\right) \right\Vert
_{H^{2}\left(
\mathbb{R}
^{n}\right) },  \tag{3.8}
\end{equation}%
for any $\varepsilon >0$, where $f_{\varepsilon }$ is the function defined
in the proof of Lemma 2.1. Considering (3.8) in (3.7), we obtain%
\begin{equation*}
\frac{d}{dt}\left( E(v(t))+\frac{1}{2}f_{\varepsilon }\left( \left\Vert
\nabla u\left( t\right) \right\Vert _{L^{2}\left(
\mathbb{R}
^{n}\right) }\right) \left\Vert \nabla v\left( t\right) \right\Vert
_{L^{2}\left(
\mathbb{R}
^{n}\right) }^{2}\right) +\left\Vert \sqrt{\beta }\nabla v_{t}\left(
t\right) \right\Vert _{L^{2}\left(
\mathbb{R}
^{n}\right) }^{2}+\left\Vert \sqrt{\alpha }v_{t}\left( t\right) \right\Vert
_{L^{2}\left(
\mathbb{R}
^{n}\right) }^{2}
\end{equation*}%
\begin{equation*}
\leq c_{3}\max_{0\leq s_{1},s_{2}\leq \varepsilon }\left\vert f\left(
s_{1}\right) -f\left( s_{2}\right) \right\vert \left\Vert v_{t}\left(
t\right) \right\Vert _{L^{2}\left(
\mathbb{R}
^{n}\right) }\left\Vert \Delta u\left( t\right) \right\Vert _{L^{2}\left(
\mathbb{R}
^{n}\right) }+c_{4}\left\Vert v\left( t\right) \right\Vert _{H^{2}\left(
\mathbb{R}
^{n}\right) }
\end{equation*}%
\begin{equation}
+\left( c_{2}\left\Vert v\left( t\right) \right\Vert _{H^{2}\left(
\mathbb{R}
^{n}\right) }^{\frac{1}{2}}+\varepsilon \left\Vert v\left( t\right)
\right\Vert _{H^{2}\left(
\mathbb{R}
^{n}\right) }+\widetilde{C_{\varepsilon }}\right) \left\Vert v_{t}\left(
t\right) \right\Vert _{L^{2}\left(
\mathbb{R}
^{n}\right) }.  \tag{3.9}
\end{equation}

Let $\eta _{r}\left( x\right) $ be the cut-off function defined in the proof
of Theorem 2.1. Multiplying (3.4) by\newline
$\sum\nolimits_{i=1}^{n}x_{i}\left( 1-\eta _{2r_{0}}\right) v_{x_{i}}+\frac{1%
}{2}\left( n-1\right) \left( 1-\eta _{2r_{0}}\right) v$, and integrating
over $%
\mathbb{R}
^{n}$, by (2.13) and (3.6), we get%
\begin{equation*}
\frac{3}{2}\gamma \left\Vert \Delta \left( v\left( t\right) \right)
\right\Vert _{L^{2}\left( B\left( 0,2r_{0}\right) \right) }^{2}+\frac{1}{2}%
\left\Vert v_{t}\left( t\right) \right\Vert _{L^{2}\left( B\left(
0,2r_{0}\right) \right) }^{2}
\end{equation*}%
\begin{equation*}
+\frac{d}{dt}\left( \sum\nolimits_{i=1}^{n}\int\limits_{%
\mathbb{R}
^{n}}x_{i}\left( 1-\eta _{2r_{0}}\left( x\right) \right) v_{x_{i}}\left(
t,x\right) v_{t}\left( t,x\right) dx+\frac{1}{2}\left( n-1\right)
\int\limits_{%
\mathbb{R}
^{n}}\left( 1-\eta _{2r_{0}}\left( x\right) \right) v_{t}\left( t,x\right)
v\left( t,x\right) dx\right)
\end{equation*}%
\begin{equation*}
\leq c_{5}\left\Vert v_{t}\left( t\right) \right\Vert _{L^{2}\left( B\left(
0,4r_{0}\right) \backslash B\left( 0,2r_{0}\right) \right)
}^{2}+c_{5}\left\Vert \Delta v\left( t\right) \right\Vert _{L^{2}\left(
B\left( 0,4r_{0}\right) \backslash B\left( 0,2r_{0}\right) \right)
}^{2}+c_{5}\left\Vert v\left( t\right) \right\Vert _{H^{2}(B\left(
0,4r_{0}\right) )}^{\frac{1}{2}}
\end{equation*}%
\begin{equation*}
+c_{5}\left\Vert \sqrt{\beta }\nabla v_{t}\left( t\right) \right\Vert
_{L^{2}(B\left( 0,4r_{0}\right) )}\left( \left\Vert v\left( t\right)
\right\Vert _{H^{2}(B\left( 0,4r_{0}\right) )}^{\frac{1}{2}}+\left\Vert
v\left( t\right) \right\Vert _{H^{2}(B\left( 0,4r_{0}\right) )}\right)
\end{equation*}%
\begin{equation*}
+c_{5}\left\Vert \sqrt{\alpha }v_{t}\left( t\right) \right\Vert
_{L^{2}(B\left( 0,4r_{0}\right) )}\left( \left\Vert v\left( t\right)
\right\Vert _{H^{2}(B\left( 0,4r_{0}\right) )}^{\frac{1}{2}}+1\right)
\end{equation*}%
\begin{equation}
+c_{5}\left\Vert v\left( t\right) \right\Vert _{H^{2}(B\left(
0,4r_{0}\right) )}^{\frac{3}{2}}+c_{5}.  \tag{3.10}
\end{equation}%
Multiplying (3.4) by $\eta _{r_{0}}^{2}v$ and integrating over $%
\mathbb{R}
^{n}$, we find%
\begin{equation*}
\frac{d}{dt}\left( \int\limits_{%
\mathbb{R}
^{n}}\eta _{r_{0}}^{2}v\left( t,x\right) v_{t}\left( t,x\right) dx+\frac{1}{2%
}\left\Vert \sqrt{\alpha }\eta _{r_{0}}v\left( t\right) \right\Vert
_{L^{2}\left(
\mathbb{R}
^{n}\right) }^{2}+\frac{1}{2}\left\Vert \sqrt{\beta }\eta _{r_{0}}\nabla
v\left( t\right) \right\Vert _{L^{2}\left(
\mathbb{R}
^{n}\right) }^{2}\right)
\end{equation*}%
\begin{equation*}
+\left\Vert \Delta v\left( t\right) \right\Vert _{L^{2}\left(
\mathbb{R}
^{n}\backslash B\left( 0,r_{0}\right) \right) }^{2}+\lambda \left\Vert
v\left( t\right) \right\Vert _{L^{2}\left(
\mathbb{R}
^{n}\backslash B\left( 0,r_{0}\right) \right) }^{2}-\left\Vert v_{t}\left(
t\right) \right\Vert _{L^{2}\left(
\mathbb{R}
^{n}\backslash B\left( 0,r_{0}\right) \right) }^{2}
\end{equation*}%
\begin{equation}
\leq c_{6}\left\Vert v\left( t\right) \right\Vert _{H^{2}\left(
\mathbb{R}
^{n}\right) }^{\frac{3}{2}}+c_{6}\left\Vert \sqrt{\beta }\nabla v_{t}\left(
t\right) \right\Vert _{L^{2}\left(
\mathbb{R}
^{n}\right) }\left( 1+\left\Vert v\left( t\right) \right\Vert
_{H^{2}(B\left( 0,4r_{0}\right) )}^{\frac{1}{2}}\right) +c_{6}.  \tag{3.11}
\end{equation}%
Multiplying (3.10) and (3.11) by $\delta ^{2}$ and $\delta $, respectively,
then summing the obtained inequalities with (3.9), choosing $\varepsilon >0$
and $\delta >0$ sufficiently small and applying Young inequality, we get%
\begin{equation}
\frac{d}{dt}\Psi \left( t\right) +c_{7}E\left( v\left( t\right) \right) \leq
c_{8}\text{, \ }\forall t\in
\mathbb{R}
\text{,}  \tag{3.12}
\end{equation}%
where%
\begin{equation*}
\Psi \left( t\right) :=E(v(t))+\frac{1}{2}f_{\varepsilon }\left( \left\Vert
\nabla u\left( t\right) \right\Vert _{L^{2}\left(
\mathbb{R}
^{n}\right) }\right) \left\Vert \nabla v\left( t\right) \right\Vert
_{L^{2}\left(
\mathbb{R}
^{n}\right) }^{2}
\end{equation*}%
\begin{equation*}
+\delta \left( \int\limits_{%
\mathbb{R}
^{n}}\eta _{r_{0}}^{2}v\left( t,x\right) v_{t}\left( t,x\right) dx+\frac{1}{2%
}\left\Vert \sqrt{\alpha }\eta _{r_{0}}v\left( t\right) \right\Vert
_{L^{2}\left(
\mathbb{R}
^{n}\right) }^{2}+\frac{1}{2}\left\Vert \sqrt{\beta }\eta _{r_{0}}\nabla
v\left( t\right) \right\Vert _{L^{2}\left(
\mathbb{R}
^{n}\right) }^{2}\right)
\end{equation*}%
\begin{equation*}
+\delta ^{2}\left( \sum\nolimits_{i=1}^{n}\int\limits_{%
\mathbb{R}
^{n}}x_{i}\left( 1-\eta _{2r_{0}}\left( x\right) \right) v_{x_{i}}\left(
t,x\right) v_{t}\left( t,x\right) dx+\frac{1}{2}\left( n-1\right)
\int\limits_{%
\mathbb{R}
^{n}}\left( 1-\eta _{2r_{0}}\left( x\right) \right) v_{t}\left( t,x\right)
v\left( t,x\right) dx\right) ,
\end{equation*}%
and the positive constant $c_{8}$, as the previous $c_{i}$ $\left( i=%
\overline{1,7}\right) $, is independent of the trajectory $\Gamma .$

Since $\delta >0$ is sufficiently small, there exist constants $c>0,$ $%
\widetilde{c}>0$ such that%
\begin{equation}
cE\left( v\left( t\right) \right) \leq \Psi \left( t\right) \leq \widetilde{c%
}E\left( v\left( t\right) \right) \text{, \ \ }\forall t\in
\mathbb{R}
\text{.}  \tag{3.13}
\end{equation}%
Taking into account (3.13) in (3.12), we obtain%
\begin{equation*}
\frac{d}{dt}\Psi \left( t\right) +c_{9}\Psi \left( t\right) \leq c_{8}\text{%
, \ \ }\forall t\in
\mathbb{R}
\text{,}
\end{equation*}%
which yields%
\begin{equation*}
\Psi \left( t\right) \leq e^{-c_{9}(t-s)}\Psi \left( s\right) +\frac{c_{8}}{%
c_{9}}\text{, \ \ }\forall t\geq s\text{.}
\end{equation*}%
Passing to the limit as $s\rightarrow -\infty $ and considering (3.13), we
get%
\begin{equation*}
E\left( v\left( t\right) \right) \leq c_{10}\text{, \ \ }\forall t\in
\mathbb{R}
\text{.}
\end{equation*}%
By using the definition of $v$, after passing to the limit as $\sigma
\rightarrow 0$ in the last inequality, we find%
\begin{equation}
E\left( u_{t}\left( t\right) \right) \leq c_{10}\text{, \ }\forall t\in
\mathbb{R}
\text{.}  \tag{3.14}
\end{equation}%
Considering (3.14) in (1.1), we obtain%
\begin{equation*}
\left\Vert u\left( t\right) \right\Vert _{H^{3}\left(
\mathbb{R}
^{n}\right) }\leq c_{11}\text{, \ \ }\forall t\in
\mathbb{R}
\text{.}
\end{equation*}%
Thus, the last inequality, together with (3.14), yields%
\begin{equation*}
\left\Vert \varphi \right\Vert _{H^{3}\left(
\mathbb{R}
^{n}\right) \times H^{2}\left(
\mathbb{R}
^{n}\right) }\leq c_{12}\text{, \ \ }\forall \varphi \in \mathcal{A}\text{,}
\end{equation*}%
which completes the proof of the theorem.
\end{proof}

\end{document}